\tikzset{node distance=1.5cm, auto}
\newcommand{\C}{{\mathbb C}}
\newcommand{\G}{\mathcal{G}}
\newcommand{\K}{{\Bbbk}}
\newcommand{\JJ}{J} 
\newcommand{\cV}{\mathcal{V}}
\newcommand{\cM}{\mathcal{M}}
\newcommand{\coalg}{\operatorname{Coalg}(\K)}
\newcommand{\alg}{\operatorname{alg}}
\newtheorem{prop}{Proposition}[section]
\newtheorem{thm}[prop]{Theorem}
\newtheorem{defi}[prop]{Definition}
\newtheorem{exmp}[prop]{Example}
\newtheorem{rmk}[prop]{Remark}
\newcommand{\benu}{\begin{enumerate}}
	\newcommand{\enu}{\end{enumerate}}
\newcommand{\beqna}{\begin{eqnarray}}
	\newcommand{\eqna}{\end{eqnarray}}
\newcommand{\beqnast}{\begin{eqnarray*}}
	\newcommand{\eqnast}{\end{eqnarray*}}
\newcommand{\beqn}{\begin{equation}}
	\newcommand{\eqn}{\end{equation}}
\newcommand{\beqnst}{\begin{equation*}}
	\newcommand{\eqnst}{\end{equation*}}
\newcommand{\RepG}{\mathcal{R}\mathrm{ep}_\K(\mathcal{G})}
\newcommand{\aga}[3]{h^{(#1)\,\,\,(#2)}_{\hspace{7pt}(#3)}}
\newcommand{\bema}{\left ( \begin{array}}
	\newcommand{\ema}{\end{array} \right )}
\newcommand{\End}{\operatorname{End}} 
\newcommand{\ot}{\otimes}
\begin{document}
	
	\title[Quantum Inverse Semigroups]{Quantum Inverse Semigroups}

	\author[M.M. Alves]{Marcelo Muniz  Alves}
	\address{Departamento de Matem\'atica, Universidade Federal do Paran{\'a}, Brazil}
	\email{marcelomsa@ufpr.br}
	\author[E. Batista]{Eliezer  Batista}
	\address{Departamento de Matem\'atica, Universidade Federal de Santa Catarina, Brazil}
	\email{ebatista@mtm.ufsc.br}
	\author[F.K. Boeing]{Francielle Kuerten Boeing}
	\address{Departamento de Matem\'atica, Universidade Federal de Santa Catarina, Brazil}
	\email{frankuertenboeing@gmail.com}
	
	\thanks{\\ {\bf 2000 Mathematics Subject Classification}: Primary 16T99, Secondary 18B40, 20M18.\\   {\bf Key words and phrases:} Quantum inverse semigroups, Hopf algebroids, generalized bisections} 
	
	\flushbottom
	
	\begin{abstract} 
		In this work, the notion of a quantum inverse semigroup is introduced as a linearized generalization of inverse semigroups. Beyond the algebra of an inverse semigroup, which is the natural example of a quantum inverse semigroup, several other examples of this new structure are presented in different contexts, those are related to Hopf algebras, weak Hopf algebras, partial actions	and Hopf categories. Finally, a generalized notion of local bisections are defined for commutative Hopf algebroids over a commutative base algebra giving rise to new examples of quantum inverse semigroups associated to Hopf algebroids in the same sense that inverse semigroups are related to groupoids.
	\end{abstract}
	
	\maketitle
	
	\section{Introduction}
	
	The very basic notion of a group has undergone several generalizations in different contexts, giving rise to a myriad of new mathematical structures. Since groups are inherently related to symmetries, one can consider these new structures arising from groups as new tools to understand the deep and subtle aspects of symmetries. In one direction, it is possible to extend groups by weakening their operations. For example, when someone weakens the group inversion, also giving up the uniqueness of units, one ends up with regular semigroups and inverse semigroups. By the widely known theorem due to Wagner and Preston \cite{Preston,Wagner}, every inverse semigroup can be viewed as a semigroup of partially defined bijections in a set, with the operation given by the composition. These partially defined bijections also evoke another mathematical structure which generalizes the notion of a group, namely, the groupoid structure. For the case of groupoids, what is modified is the definition of the binary operation, which is not globally defined anymore. It is easier to understand why groupoids are a generalization of groups if we consider a group as a one object category, taking the elements of the group as the endomorphisms of that object and the group product as the composition. In this case, a groupoid is a ``multi-object group", more precisely, a small category in which every morphism is an isomorphism.
	
	The relationship between inverse semigroups and groupoids has been elucidated in the literature in several ways. For example, starting from an inverse semigroup $S$, one can naturally associate a groupoid whose unit space is the set of units $E(S)$ and whose operation is the restriction of the operation in $S$. This groupoid has a partial order induced by the partial order of the semigroup itself, in fact, it is an inductive groupoid, meaning that its set of units is a meet semilattice. On the other hand, given an inductive groupoid, one can associate to it a new inverse semigroup. This exchange between inverse semigroups and groupoids composes the content of the Ehresmann-Nambooripad-Schein theorem, which establishes a categorical isomorphism between the category of inverse semigroups with prehomomorphisms and the category of inductive groupoids and ordered functors \cite{Ehresmann,Nambooripad,Schein}.
	
	One can also observe the interchange between inverse semigroups and groupoids considering the case of \'etale groupoids. This connection was first explored in the context of operator algebras \cite{Paterson}. An \'etale groupoid is a topological groupoid in which the source and target maps are local homeomorphisms \cite{MR}. Given an \'etale groupoid $\mathcal{G}$, the set of its local bisections $\mathfrak{B}(\mathcal{G})$ constitutes an inverse semigroup \cite{ruy3}. In turn, given an inverse semigroup $S$, one can define an action of this semigroup on the set of characters of its unit space and, from this action, associate its germ groupoid $Gr(S)$, which is an \'etale groupoid \cite{MR}. More precisely, considering the category of inverse semigroups with semigroup morphisms and the category of \'etale groupoids with algebraic morphisms\footnote{ An algebraic morphism between the groupoids $\mathcal{G}$ and $\mathcal{H}$ is a left action of $\mathcal{G}$ over the arrows of $\mathcal{H}$ commuting with the right action of $\mathcal{H}$ over itself by the multiplication in $\mathcal{H}$ \cite{Buneci}.}, the functor which associates to each inverse semigroup the germ groupoid of the canonical action on the characters of its unit space is left adjoint to the functor which associates to each \'etale groupoid its semigroup of bisections \cite{buss}.
	
	Another completely different direction in which it is possible to generalize groups is via Hopf algebras, which can be considered as a kind of ``linearized version of groups". Hopf algebras have nice properties relative to duality and representation theory and, due to the emergence of quantum groups \cite{Drinfeld}, became more popular in the nineties, even among the physicists, when quantum groups started to be considered seriously as symmetries of quantum systems, for example, as symmetries of the spectrum of diatomic molecules \cite{Chang} or symmetries of Landau states in the quantum Hall effect \cite{Sato}. There are several different generalizations of Hopf algebras in the literature. Here we mention only three structures which generalize both Hopf algebras and groupoids: weak Hopf algebras \cite{BNS}, Hopf algebroids\cite{Bohm,BM} and Hopf categories \cite{BCV}. Among the aforementioned structures, Hopf algebroids are, in certain sense, the richest and most promising option to generalize groupoids in the Hopf context. However, it does not exist so far in the literature a good generalization of inverse semigroups and Hopf algebras which can play the same role with respect to Hopf algebroids as inverse semigroups do to groupoids.
	
	Our aim in this work is exactly to start filling this gap that existed so far by introducing the quantum inverse semigroups. As characters in search of an author, this subject appeared as the story of examples in search of a theory. The lessons coming from the study of partial actions of Hopf algebras and some aspects of the theory of Hopf algebroids motivated examples of what should be a quantum inverse semigroup. After some mathematical preliminaries concerning Hopf algebroids in Section 2, we introduce quantum inverse semigroups in Section 3, giving examples relative to inverse semigroups, Hopf algebras, weak Hopf algebras, partial representations of Hopf algebras and Hopf categories and an example inspired in a quantum version of Hadamard's matrices \cite{Banica}. In Section 4, we introduce the notion of a local biretraction of a Hopf algebroid, as a dual version of local bisections in groupoids. First we concentrate on commutative Hopf algebroids and, after, we extend to not necessarily commutative Hopf algebroids over a commutative base algebra whose left and right bialgebroid structures are tied in a specific way. The set of local biretractions of a Hopf algebroid is a regular semigroup, rarely being an inverse semigroup, but the algebra generated by the set of local biretractions defines a quantum inverse semigroup after all. We construct many explicit examples of biretractions in Hopf algebroids and characterize their semigroups, making connections, in various aspects, with some very interesting constructions.

	\section{Mathematical preliminaries}
	Throughout this text, $\K$ will denote a field of characteristic $0$ and unadorned tensor products will denote tensor products over the base field $\K$.
	
	\subsection{Hopf algebroids}
	\begin{defi} \label{leftbialgebroid} \cite{Bohm}
		A left bialgebroid over a $\K$-algebra $A$ is a quintuple $(\mathcal{H}, s_l ,t_l , \Delta_l , \varepsilon_l)$ in which:
		\begin{enumerate}
			\item[\emph{(LB1)}] $s_l: A\rightarrow \mathcal{H}$ is an algebra morphism and $t_l : A\rightarrow \mathcal{H}$ is an algebra antimorphism  such that $s_l (a) t_l (b) =t_l (b) s_l (a)$, for  every $a,b \in A$,  making $\mathcal{H}$ an $A$-bimodule with the structure
			\[
			a\triangleright h \triangleleft b =s_l (a) t_l (b) h .
			\]
			\item[\emph{(LB2)}] $(\mathcal{H} , \Delta_l , \varepsilon_l )$ is an $A$-coring with the above mentioned $A$-bimodule structure.
			\item[\emph{(LB3)}] $\Delta_l (\mathcal{H}) \subseteq  \mathcal{H} \times_A^l \mathcal{H} =\{ \sum h_i \otimes k_i \in \mathcal{H} \otimes_A \mathcal{H} \; | \; 
			\sum h_i t_l (a) \otimes k_i =\sum h_i \otimes k_i s_l (a) ,\; \forall a\in A \}$  and the co-restriction map is an algebra morphism.
			\item[\emph{(LB4)}] $\varepsilon_l (hk)=\varepsilon_l (h s_l(\varepsilon_l (k))) =\varepsilon_l (h t_l(\varepsilon_l (k)))$.
		\end{enumerate} 
	\end{defi}
	
	\begin{defi} \label{rightbialgebroid} \cite{Bohm}
		A right bialgebroid over a $\K$-algebra $A$ is a quintuple $(\mathcal{H}, s_r ,t_r , \Delta_r , \varepsilon_r)$ in which:
		\begin{enumerate}
			\item[\emph{(RB1)}] $\mathcal{H}$ is a $\K$-algebra, $s_r: A\rightarrow \mathcal{H}$ is an algebra morphism and $t_r : A\rightarrow \mathcal{H}$ is an algebra antimorphism  such that $s_r (a) t_r (b) =t_r (b) s_r (a)$, for  every $a,b \in A$  making $\mathcal{H}$ an $A$-bimodule with the structure
			\[
			a\blacktriangleright h \blacktriangleleft b =ht_r (a) s_r (b)  .
			\]
			\item[\emph{(RB2)}] $(\mathcal{H} , \Delta_r , \varepsilon_r )$ is an $A$-coring with the above-mentioned $A$-bimodule structure.
			\item[\emph{(RB3)}] $\Delta_r (\mathcal{H}) \subseteq  \mathcal{H} \times_A^r \mathcal{H} =\{ \sum h_i \otimes k_i \in \mathcal{H} \otimes_A \mathcal{H} \; | \; 
			\sum s_r (a)h_i \otimes k_i =\sum h_i \otimes t_r (a) k_i ,\; \forall a\in A \}$  and the co-restriction map is an algebra morphism.
			\item[\emph{(RB4)}] $\varepsilon_r (hk)=\varepsilon_r ( s_r(\varepsilon_r (h))k) =\varepsilon_r ( t_r(\varepsilon_r (h))k)$.
		\end{enumerate} 
	\end{defi}
	
	\begin{defi} \label{hopfalgebroid} \cite{Bohm}
		Let $A$ and $\overline{A}$ be $\K$ algebras. A Hopf algebroid over the base algebras $A$ and $\overline{A}$ is a triple $\mathcal{H}=(\mathcal{H}_l , \mathcal{H}_r , S)$ such that.
		\begin{enumerate}
			\item[\emph{(HA1)}] $\mathcal{H}_l =\mathcal{H}$ is a left bialgebroid over $A$ and $\mathcal{H}_r =\mathcal{H}$ is a right bialgebroid over $\overline{A}$.
			\item[\emph{(HA2)}] $s_l \circ \varepsilon_l \circ t_r =t_r, \qquad$ $t_l \circ \varepsilon_l \circ s_r =s_r, \qquad$ $s_r \circ \varepsilon_r \circ t_l =t_l \qquad$ and $\quad t_r \circ \varepsilon_r \circ s_l =s_l$.
			\item[\emph{(HA3)}] $(\Delta_l \otimes_{\overline{A}} \mathcal{H})\circ \Delta_r =(\mathcal{H}\otimes_A \Delta_r)\circ \Delta_l$ and  $(\Delta_r \otimes_{A} \mathcal{H})\circ \Delta_l =(\mathcal{H}\otimes_{\overline{A}} \Delta_l)\circ \Delta_r$.
			\item[\emph{(HA4)}] $S: \mathcal{H}\rightarrow \mathcal{H}$ is a $\K$-linear map such that for all $a\in A$, $b\in \overline{A}$ and $h\in \mathcal{H}$, $S(t_l (a)ht_r (b))=s_r (b)S(h)s_l (a)$.
			\item[\emph{(HA5)}] Denoting by $\mu_l$ and $\mu_r$, respectively, the multiplication in $\mathcal{H}$ as left and right bialgebroid, we have
			\[
			\mu_l \circ (S\otimes_A \mathcal{H})\circ \Delta_l =s_r \circ \varepsilon_r ,\qquad \mbox{ and } \mu_r \circ (\mathcal{H} \otimes_{\overline{A}} S)\circ \Delta_r =s_l \circ \varepsilon_l .
			\]
		\end{enumerate}
	\end{defi}
	
	\begin{rmk}
		As consequences of the Hopf algebroid's axioms, we have the following properties:
		\begin{itemize}
			\item $S$ is antimultiplicative \cite{Bohm};
			\item $S$ maps unity to unity, because
			$$1_\mathcal{H}=s_r\circ \varepsilon_r(1_\mathcal{H}) = S(1_\mathcal{H})\, 1_\mathcal{H}= S(1_\mathcal{H});$$
			\item $S$ is anticomultiplicative \cite{Bohm}. More precisely, the following identities are satisfied:
			$$\Delta_l\circ S = (S\otimes_A S)\circ \Delta_r^{cop},\qquad  \Delta_r\circ S = (S\otimes_{\overline{A}} S)\circ \Delta_l^{cop}.$$
		\end{itemize}
		
	\end{rmk}
	
	Let us consider in more detail the case of a commutative Hopf algebroid $\mathcal{H}$ over a commutative base algebra $A=\overline{A}$. In this case, the source and target maps $s_l$, $s_r$, $t_l$ and $t_r$ are all morphisms of algebras; moreover, the commutativity of $\mathcal{H}$ implies that $s_l =t_r$ and $s_r =t_l$ and therefore one can choose arbitrarily one laterality for the bialgebroid structure. Throughout this work we shall denote by $s$ the right source map and by $t$ the right target map. Also in the commutative case, the left and right Takeuchi tensor products, $\mathcal{H} \times^l_A \mathcal{H}$ and $\mathcal{H} \times^r_A \mathcal{H}$, are identified with the tensor product $\mathcal{H} \otimes_A \mathcal{H}$, so the left and right comultiplications and counits coincide; the counit also turns out to be an algebra morphism. Finally, we can rewrite axiom (HA5) in a more suitable way: for any $h\in \mathcal{H}$, we have
	\[
	S(h_{(1)})h_{(2)} =s(\varepsilon (h)),\qquad \mbox{ and } \qquad h_{(1)}S(h_{(2)})=t(\varepsilon (h)).
	\]
	In particular, we can deduce the following very useful identities:
	\[
	h_{(1)}S(h_{(2)}) h_{(3)} =h, \qquad \mbox{ and } \qquad S(h_{(1)})h_{(2)}S(h_{(3)}) =S(h) .
	\]
	
	\begin{exmp}\label{AA}
		Let $A$ be a commutative algebra and consider $\mathcal{H}=A\otimes A$. This algebra is endowed with a Hopf algebroid structure by 
		\[
		s(a) =1_A \otimes a , \qquad t(a)=a \otimes 1_A , \qquad \Delta (a\otimes b) =a\otimes 1_A\otimes_A 1_A \otimes b,
		\]
		\[
		\varepsilon (a\otimes b) =ab \qquad  \mbox{ and }\qquad  \mathcal{S}(a\otimes b) =b\otimes a.
		\]
	\end{exmp}
	
	\begin{exmp}\label{AALaurent}
		A slight generalization of the previous example is the algebra of Laurent polynomials, $\mathcal{H}=(A\otimes A) [x,x^{-1}]$, for $A$ being a commutative algebra. This algebra is also a Hopf algebroid with 
		\[
		s(a) =1_A \otimes a, \qquad t(a) = a \otimes 1_A , \qquad \Delta ((a\otimes b)x^n ) =(a\otimes 1_A )x^n \otimes_A (1_A \otimes b)x^n,
		\]
		\[
		\varepsilon ((a\otimes b)x^n ) =ab \qquad  \mbox{ and }\qquad  \mathcal{S}((a\otimes b)x^n ) =(b\otimes a)x^{-n}.
		\]
	\end{exmp}

	\subsection{The Hopf algebroid of the representative functions of a discrete groupoid} \label{RepFun}
	Let $\G$ be a discrete groupoid. An $n$-dimensional  $\G$-representation consists on \cite{Laiachi}:
	\begin{itemize}
		\item $\mathcal{E}= \bigsqcup_{x\in\G^{(0)}} E_x$ disjoint union of $n$-dimensional $\K$-vector spaces $E_x$  and linear isomorphisms $\varphi_x: \K^n \rightarrow E_x$ for every $x\in \G^{(0)}.$
		
		\item A family of linear isomorphisms $\rho^\mathcal{E}_g : E_{s(g)}\rightarrow E_{t(g)}$ for every $g\in \G$ such that for every $x\in\G^{(0)}$ and composable $g,h\in\G,$
		$$\rho^\mathcal{E}_{ i(x)}=id_{E_x},\,\,\,\,\,\,\,\,\,\,\, \rho^{\mathcal{E}}_{gh}=\rho^{\mathcal{E}}_g \rho^{\mathcal{E}}_h.$$
	\end{itemize}
	
	For example, $\mathcal{I}=\bigsqcup_{x\in\G^{(0)}}\mathcal{I}_x,$ with $\mathcal{I}_x=\K$ for every $x\in\G^{(0)}$ and $\rho^{\mathcal{I}}_g=id_{\K}$ for every $g\in\G$ is a $\G$-representation.
	
	A morphism $\lambda$ between $\G$-representations $(\mathcal{E},\rho^{\mathcal{E}})$ and $(\mathcal{F},\rho^{\mathcal{F}})$ is a family of linear maps $\{\lambda_x\}_{x\in\G_{(0)}}$ with $\lambda_x: E_x\rightarrow F_x$ such that for every $g\in\G,$
	$$\rho^{\mathcal{F}}_g\lambda_{s(g)}=\lambda_{t(g)}\rho^\mathcal{E}_g.$$
	
	Denote by $\RepG$ the category of the $\G$-representations in $\K$-vector spaces, which tensor product and duals for $\G$-representations $(\mathcal{E},\rho^{\mathcal{E}})$ and $(\mathcal{F},\rho^{\mathcal{F}})$ are given by
	$$(\mathcal{E},\rho^{\mathcal{E}})\otimes (\mathcal{F},\rho^{\mathcal{F}}):= (\mathcal{E}\otimes\mathcal{F},\rho^{\mathcal{E}}\otimes\rho^\mathcal{F}) =\left(\bigsqcup_{x\in\G^{(0)}}E_x\otimes_\K F_x, \left\{\rho^\mathcal{E}_g\otimes_\K \rho^\mathcal{F}_g\right\}_{g\in\G}\right)$$
	
	$$(\mathcal{E},\rho^{\mathcal{E}})^*=\left(\bigsqcup_{x\in\G^{(0)}}{E_x}^*,\{\rho^{\mathcal{E}^*}_g\}_{g\in\G}\right),$$
	where $\rho^{\mathcal{E^*}}: {E_{s(g)}}^*\rightarrow {E_{t(g)}}^*$ with $\rho^{\mathcal{E^*}}_g (\varphi)= \varphi\circ\rho^{\mathcal{E}}_{g^{-1}}$ for every $g\in\G$ and $\varphi\in {E_{s(g)}}^*.$
	
	Now setting $A=Fun(\G^{(0)},\K),$ we have that
	$$\Gamma(\mathcal{E})=\{p:\G^{(0)}\rightarrow \mathcal{E}\, | \,p(x)\in E_x\,\,\forall x\in\G^{(0)}\}$$
	is a finitely generated and projective $A$-module \cite{Laiachi}. Also, for any $\G-$representation $(\mathcal{E},\rho^{\mathcal{E}}),$ let $T_\mathcal{E}:= \mathrm{End}_{\RepG} (\mathcal{E},\rho^{\mathcal{E}}),$ 
	$T_{\mathcal{E},\mathcal{F}}:= \mathrm{Hom}_{\RepG} \left( (\mathcal{E},\rho^\mathcal{E}),(\mathcal{F},\rho^\mathcal{F})\right)$ and consider the direct sum of tensor products 
	$$\Gamma:=\bigoplus_{(\mathcal{E},\rho^{\mathcal{E}})\in\RepG} \Gamma(\mathcal{E}^*)\otimes_{T_\mathcal{E}}\Gamma(\mathcal{E}),$$
	being the sum over the isomorphism classes of representations of $\mathcal{G}$. It is important to take care in this point to assure that the direct sum is done upon a set of indexes, not a proper class. For each $n\in \mathbb{N}$, a representation is given by the data $\{ (\varphi_x )_{x\in \mathcal{G}^{(0)}} ,(\rho_\gamma )_{\gamma \in \mathcal{G}} \}$. The tuple $(\varphi_x )_{x\in \mathcal{G}^{(0)}}$is an element of the product
	\[
	\prod_{x\in \mathcal{G}^{(0)}} \text{Hom}_\K (\K^n ,E_x) \cong \prod_{x\in \mathcal{G}^{(0)}} \text{Hom}_\K (\K^n ,\K^n)
	\]
	and the tuple $(\rho_\gamma )_{\gamma \in \mathcal{G}} \}$ can be seen as an element of
	\[
	\prod_{x,y\in \mathcal{G}^{(0)}} \text{Hom}_\K (E_x ,E_y) \cong \prod_{x,y\in \mathcal{G}^{(0)}} \text{Hom}_\K (\K^n ,\K^n),
	\]
	putting the null linear transformation in the components $\text{Hom}_\K (E_x ,E_y)$ for which there is no $\gamma \in \mathcal{G}$ such that $s(\gamma)=x$ and $t(\gamma)=y$. Therefore, the set of isomorphism classes of representations of $\mathcal{G}$, can be seen as a subset of
	\[
	\coprod_{n\in \mathbb{N}} \left( \prod_{x\in \mathcal{G}^{(0)}} \text{Hom}_\K (k^n ,k^n) \times \prod_{x,y\in \mathcal{G}^{(0)}} \text{Hom}_\K (\K^n ,\K^n) \right) .
	\]
	Once established that it is well-defined, the direct sum $\Gamma$ is a commutative $(A\otimes_\K A)-$algebra with the product
	$$(\varphi\otimes_{T_\mathcal{E}}p)(\psi\otimes_{T_\mathcal{F}}q)= (\varphi\otimes_A \psi)\otimes_{T_{\mathcal{E\otimes\mathcal{F}}}}(p\otimes_A q)$$
	for every $\varphi\in\Gamma(\mathcal{E^\ast}),$ $\psi\in\Gamma(\mathcal{F}),$ $p\in\Gamma(\mathcal{E})$ and $q\in\Gamma(\mathcal{F}).$ Finally, take the quotient 
	$$\mathcal{R}_\K (\G):=\frac{\bigoplus_{(\mathcal{E},\rho^{\mathcal{E}})\in\RepG} \Gamma(\mathcal{E}^*)\otimes_{T_\mathcal{E}} \Gamma(\mathcal{E})}{\mathscr{J}_{\RepG}}$$
	of $\Gamma$ by the ideal
	$$\mathscr{J}_{\RepG}=\langle \varphi\otimes_{T_\mathcal{F}}\lambda p - \varphi\lambda\otimes_{T_\mathcal{E}}p\,|\,\varphi\in\Gamma(\mathcal{F}^*), p\in\Gamma(\mathcal{E}),\lambda\in T_{\mathcal{E},\mathcal{F}}\rangle$$
	is a $(A\otimes_{\K}A)$-algebra with the inherited product from $\Gamma$ and is called the algebra of the representative functions on the groupoid $\G.$ 
	The elements of the algebra are denoted by $\overline{\varphi\otimes_{T_\mathcal{E}}p}.$ Also, $\mathcal{R}_\K (\G)$ has a commutative Hopf algebroid structure over the commutative base algebra $A:$ for every $a\in A,$ $\overline{\varphi\otimes_{T_\mathcal{E}}p}\in\mathcal{R}_\K(\G)$ and $x\in\G^{(0)},$
	$$\overline{s}(a)=\overline{1_A\otimes_{T_\mathcal{I}}a},\,\,\,\,\,\,\,\,\,\,\,\,\,\,\, \overline{t}(a)=\overline{a\otimes_{T_\mathcal{I}} 1_A}, \,\,\,\,\,\,\,\,\,\,\,\,\,\,\,\Delta (\overline{\varphi\otimes_{T_\mathcal{E}} p})= \sum_{i=1}^{n}\overline{\varphi \otimes_{T_\mathcal{E}} e_i} \otimes_A \overline{{e_i}^*\otimes_{T_\mathcal{E}} p},$$
	$$\varepsilon (\overline{\varphi\otimes_{T_\mathcal{E}} p})(x)=\varphi(x)\left(p(x)\right)\,\,\,\,\,\,\,\,\,\,   and\,\,\,\,\,\,\,\,\,\, S(\overline{\varphi\otimes_{T_\mathcal{E}} p})=\overline{\tilde{p}\otimes_{T_{\mathcal{E}^*}}\varphi}, \,\, with\,\, \tilde{(\_)}:\mathcal{E}\cong (\mathcal{E}^*)^*,$$
	where $\{{e_i}^*,e_i\}$ is the dual basis of the $A$-module $\Gamma(\mathcal{E})$ and $\mathcal{I}$ is the trivial $\G-$representation $\mathcal{I}=\bigsqcup_{x\in\G^{(0)}}\K,$ with $\rho^{\mathcal{I}}_g=\textrm{Id}_{\K}.$
	
	\begin{exmp}
		A group $G$ can be seen as a groupoid $\G=G$ with $\G^{(0)}= \{1_G\}.$ A $G$-representation is a finite dimensional vector space $V$ together with linear isomorphisms $\rho^V_g:V\rightarrow V$ such that $\rho^V_g\rho^V_h=\rho^V_{gh}$ for every $g,h\in G.$ Hence the representations of the groupoid $G$ are the same as the representations of the group. Also, $A=Fun(\G^{(0)},\K)\cong \K,$
		$$ \Gamma(V) = \{ p:\{1_G\}\rightarrow V\} \cong V$$
		and $\Gamma (V^\ast)\cong V^\ast.$  Moreover, an endomorphism for the representation $(V,\rho^V)$ is a linear map $\alpha:V\rightarrow V$ such that $$\alpha\circ \rho^V_g=\rho^V_g\circ \alpha$$
		for every $g\in G.$ Thus $\alpha=\lambda\,\emph{\textrm{Id}}_V$ for some $\lambda\in \K$ and  $T_V\cong \K.$ Consequently, the ideal $\mathscr{J}_{\mathcal{\RepG}}(G)=0.$ Then the algebra of representative functions of $G$ is the algebra
		$$\mathcal{R}_\K(G) \cong \bigoplus_{(V,\rho^V)\in\RepG} V^\ast\otimes_\K V$$
		and an element of $\mathcal{R}_\K(G)$ can be written as a triple $(\varphi,v,\rho^V)$ with $\varphi\in V^\ast,$ $v\in V$ and $\rho^V$ a $G-$representation, which can be identified as the representative function for the group $G$
		\[
		f :   G  \longrightarrow  \K, \ \ \ \ \
		g  \longmapsto \varphi(\rho^V(g)(v))
		\]
		Therefore $\mathcal{R}_\K(G)$ is exactly the commutative Hopf $\K-$algebra $R(G)$ of the representative functions on the group $G.$
	\end{exmp}
	
	\begin{exmp}
		Let $\G$ be a groupoid consisting only of its units. In this case, for each $x\in \G^{(0)}$, the isotropy group $G_x=\{g\in\G\,| \, s(g)=t(g)=x\}$ contains only one element. This groupoid is known as the \emph{unit groupoid} because we have $\G=\G^{(0)}$ and source and target $s=t=$ \emph{Id}$_{G^{(0)}}.$ Then a $\G-$representation is given by a disjoint union
		$$\mathcal{E}=\bigsqcup_{x\in\G^{(0)}}E_x\cong \bigsqcup_{x\in \G^{(0)}}V,$$
		where $V$ is a $n-$dimensional vector space and the linear isomorphisms $\rho^\mathcal{E}_x:E_x\rightarrow E_x$ are the identity map for every $x\in\G^{(0)}.$ Hence the $\G-$representation is simply the set $V\times \G^{(0)}.$ Also, observe that
		$$\Gamma (V\times \G^{(0)})=\{p:\G^{(0)}\rightarrow V\times \G^{(0)}\, |\, p(x)\in V\times \{x\}\}\cong A^n,$$
		where $A=Fun(\G^{(0)},\K).$ Similarly, $\Gamma((V\times\G^{(0)})^\ast) \cong A^n$ and morphisms between $\G-$representations are $T_{V\times\G^{(0)},W\times\G^{(0)}}\cong M_{n,m}(A),$ where $W$ is a $m-$dimensional vector space. Therefore, the Hopf algebroid of the representative functions of $\G$ is given by the quotient
		$$\mathcal{R}_\K(\G)= \frac{\bigoplus_{n\in\mathbb{N}}A^n\otimes_{M_n(A)}A^n}{\Big\langle u\otimes_{M_n(A)}(\lambda_{ij})v - u(\lambda_{ij})\otimes_{M_m (A)} v\Big\rangle_{u\in A^n, v\in A^m, (\lambda_{ij})\in M_{n,m}(A)}}.$$
		This quotient indeed coincides with the algebra $A$. Consider, for example, the following element of $\mathcal{R}_{\K} (\G)$
		\[
		\overline{(f^1, \ldots , f^n) \otimes_{M_n (A)} \left( \begin{array}{c} g^1 \\
				\vdots \\
				g^n \end{array} \right)} .
		\]
		The vector $(f^1 , \ldots , f^n)\in A^n$ can be viewed as the product $\mathbf{1} (f^1 , \ldots , f^n)$, in which $\mathbf{1}:\G^{(0)}\rightarrow \K$ is the constant unit function, and $(f^1 , \ldots , f^n)\in M_{1\times n} (A)$ then  
		\[
		\overline{(f^1, \ldots , f^n) \otimes_{M_n (A)} \left( \begin{array}{c} g^1 \\
				\vdots \\
				g^n \end{array} \right)} =\overline{\mathbf{1} \otimes_{M_1 (A)} (f^1, \ldots , f^n) \left( \begin{array}{c} g^1 \\
				\vdots \\
				g^n \end{array} \right)} =\overline{\mathbf{1} \otimes_A \sum_i f^i g^i}
		\]
		Therefore, $\mathcal{R}_k (\mathcal{G})\cong A$.
	\end{exmp}
	
	\begin{rmk}\cite[Proposition 2.2]{Laiachi}\label{zeta}
		For a groupoid $\G$ and $A=Fun(\G^{(0)},\K),$ one can construct a $(A\otimes_\K A)-$algebra morphism from $\mathcal{R}_\K(\G)$ to the commutative algebra $B:=Fun(\G,\K)$ given by
		\begin{center}
			\begin{tabular}{r c l}
				$\zeta:$ & $\mathscr{R}_\K (\G)$ & $\longrightarrow B$  \\
				& $\overline{\varphi\otimes_{T_\mathcal{E}}p}$ & $\longmapsto \zeta(\overline{\varphi\otimes_{T_\mathcal{E}} p}),$
			\end{tabular}    
		\end{center}
		with $\zeta(\overline{\varphi\otimes_{T_\mathcal{E}} p})(g)=\varphi(t(g))\left(\rho^\mathcal{E}_g(p(s(g))\right)$ for each $g\in \G,$ which is a well-defined map because of the definitions of the ideal $\mathscr{J}_{\RepG}$ and of the morphisms between $\G-$representations. One can also prove that $\zeta$ is injective and satisfies
		\begin{itemize}
			\item[\emph{(1)}] $ i^*\circ \zeta =\varepsilon,$ with $ i:\G^{(0)}\rightarrow \G$ being the inclusion map;
			\item[\emph{(2)}] $\zeta\circ S(\overline{\varphi\otimes_{T_\mathcal{E}}p})(g)=\zeta (\overline{\varphi\otimes_{T_\mathcal{E}}p})(g^{-1});$
			\item[\emph{(3)}] For every $g,h\in G$ such that $s(g)=t(h)$ and every $F\in \mathscr{R}_\K (\G),$ we have that
			$$\zeta (F)(gh)=\zeta(F_{(1)})(g)\zeta(F_{(2)})(h),$$
		\end{itemize}
		where $\Delta (F)=F_{(1)}\otimes_A F_{(2)}.$
		
	\end{rmk}

	\begin{exmp}\cite{Laiachi}\label{XGX}
		Consider the groupoid $\G=X\times G\times X,$ where $X$ is a set, $G$ is a group, $(x,g,y)^{-1}=(y,g^{-1},x)$ and
		$$(x,g,y)\cdot (y,h,z)=(x,gh,z)$$
		for every $x,y,z\in X$ and $g,h\in G.$ Also consider $\G^{(0)}= X$ and the source and target maps being the projections on the third and first coordinates, respectively. Let $A=Fun(X,\K)$ the set of all maps from $X$ to $\K.$ 
		
		Using the $\zeta$ map from Remark \ref{zeta}, a representative function $\overline{\varphi\otimes_{T_\mathcal{E}}p}$ of $\G$ can be seen as a map from $\G$ onto $\K$ given by
		\begin{equation}\label{eq1}
			\zeta(\overline{\varphi\otimes_{T_\mathcal{E}}p})(x,g,y)= \varphi(x)\left(\rho^\mathcal{E}_{(x,g,y)}(p(y))\right) 
		\end{equation}
		for every $x,y\in X$ and $g\in\G.$ Then fixing $x_0\in X,$ for a $n-$dimensional $\G-$representation $(\mathcal{E},\rho^\mathcal{E}),$ we have that
		$$\rho^\mathcal{E}_{(x,g,y)}=\rho^\mathcal{E}_{(x,1_G,x_0)}\rho^\mathcal{E}_{ (x_0,g,x_0)}\rho^\mathcal{E}_{(x_0,1_G,y)}$$
		for every $(x,g,y)\in\G.$ And letting $\left(a_{ij}^g\right)_{1\leq i,j\leq n}$ the $n-$square matrix representing the $\K-$linear isomorphism $\rho^\varepsilon_{(x_0,g,x_0)},$ the expression (\ref{eq1}) can be written as
		$$\zeta(\overline{\varphi\otimes_{T_\varepsilon}p})(x,g,y) = \sum_{i,j,k,l=1}^n \varphi_i(x)\, a_{jk}^g\,\, p_l(y)$$
		with $\varphi_i,p_l\in A$ for all $i,l=1,\ldots n.$
		
		In addition, if $a,b\in A$ and $f$ is in the Hopf algebra $R(G)$ of the representative functions on the group $G,$ then $f:G\rightarrow\K$ can be written as
		$$f(g)=F(\rho(g)(v)) \qquad \forall g\in G$$
		with $v$ being an element of a $n-$dimensional vector space $V,$ $F:V\rightarrow\K$ and $\rho:G\rightarrow GL(V)$ a representation of the group $G.$ Thus $\mathcal{E}=\bigsqcup_{x\in X} V$ and $\rho^\mathcal{E}_{(x,g,y)}=\rho(g):V\rightarrow V$ form a $\G-$representation and defining
		\begin{center}
			\begin{tabular}{rl c rl}
				$\varphi:X$   & $\longrightarrow V^\ast$ & $\qquad$ & $p:X$ & $\longrightarrow V$  \\
				$x$ & $\longmapsto \varphi(x): w\mapsto b(x)\, F(w)$  & $\qquad$ & $x$ & $\longmapsto a(x)\, v$
			\end{tabular},
		\end{center}
		we have that
		\begin{equation}\label{afb}
			\zeta(\overline{\varphi\otimes_{T_\mathcal{E}}p})(x,g,y) = a(x)\, f(g)\, b(y)
		\end{equation}
		for every $(x,g,y)\in\G.$ Consequently, the image of $\mathscr{R}_\K (\G)$ in $Fun(X\times G\times X,\K)$ by $\zeta$ coincides with the image of the canonical map
		$$A\otimes_\K R (G)\otimes_\K A \hookrightarrow Fun(X\times G\times X,\K).$$
		And since the two maps are injective, we have an isomorphism of $A-$bimodules 
		$$\mathscr{R}_\K (\G) \cong A\otimes_\K R (G)\otimes_\K A,$$
		which is also an isomorphism of $A-$Hopf algebroids with the Hopf algebroid structure on $A\otimes_\K R(G)\otimes_\K A$
		\begin{align}\label{structure}
			& s'(a)=1_A\otimes 1_{R(G)}\otimes a \nonumber\\
			& t'(a)=a\otimes 1_{R(G)}\otimes 1_A\nonumber\\
			& \Delta'(a\otimes f\otimes b)= (a\otimes f_{(1)}\otimes 1_A)\otimes_A (1_A\otimes f_{(2)}\otimes b)\\
			& \varepsilon' (a\otimes f\otimes b)(x)=a(x)b(x)f(1_G)\nonumber\\
			& S'(a\otimes f\otimes b)(x\otimes g\otimes y)= a(y)b(x)f(g^{-1}).\nonumber
		\end{align}
		for every $a,b\in A,$ $x,y\in X$ and $f\in R(G).$
	\end{exmp}

	\begin{rmk}\label{transitivo}
		A transitive groupoid $\G$ with \emph{source} $s$ and \emph{target} $t$ is a groupoid such that for every pair $x,y\in X$ there exists an element $g\in \G$ that satisfies $x=s(g)$ and $y=t(g).$ Then, fixing $x_0\in \G^{(0)},$ we have $\G \cong \G^{(0)}\times G_{x_0}\times \G^{(0)}$ ($G_{x_0}$ is the isotropy group for $x_0$) with the isomorphism
		$$\psi(g)=\left( t(g), \phi^{-1}_{t(g)}\,g\,\phi_{s(g)}, s(g)\right),$$
		where  for every $y\in \G^{(0)},$ $\phi_y$ gives the element of $\G$ that satisfies $x=s(\phi_y)$ and $y=t(\phi_y)$ and \emph{source} and \emph{target} given by the projections on third and first coordinates, respectively.
		
		Therefore, we have from Example \ref{XGX} that the Hopf algebroid of the representative functions of a transitive groupoid is
		$$\mathcal{R}_\K(\G)\cong A\otimes_\K R(G)\otimes_\K A,$$
		where $A=Fun(\G^{(0)},\K)$ and $R(G)$ is the Hopf algebra of the representative functions on the isotropy group $G=G_{x_0}$ for some fixed $x_0\in\G^{(0)}.$
	\end{rmk}
	
	\begin{exmp}\label{cartesiano}
		Consider a set $X$ and the groupoid $\G=X\times X$ with 
		$$(x,y)(y,z)=(x,z) \qquad\qquad (x,y)^{-1}=(y,x).$$
		Observe that this groupoid can be seen as a particular case from Example \ref{XGX} with the isotropy group  $G$ being a unitary group $\{e\}.$ Since the Hopf algebra $R(\{e\})$ is isomorphic to $\K$ and, consequently, 
		$$\mathscr{R}_\K (\G)\cong  A\otimes_\K A$$
		with the same Hopf algebroid structure seen in the Example \ref{AA}.
	\end{exmp}

	\section{Quantum inverse semigroups}
	
	The basic motivation for quantum inverse semigroups is the theory of inverse semigroups and its role played in describing partial symmetries \cite{law}. A semigroup $S$ is said to be an inverse semigroup, if every element $s\in S$ admits a unique pseudo-inverse, that is, a unique element $s^* \in S$ such that $ss^* s=s$ and $s^* s s^* =s^*$. Let $S$ be an inverse semigroup, denote by $E(S)$ the set of idempotent elements of $S$. One can prove that the uniqueness of the pseudo-inverse is equivalent to $E(S)$ being a commutative subsemigroup of $S$. 
	
	For an example of an inverse semigroup, take $\mathcal{I}(X)$ as the set of bijections between subsets of $X$, that is
	\[
	\mathcal{I}(X) =\{ f: \mbox{Dom}(f)\subseteq X \rightarrow \mbox{Im}(f) \subseteq X \; | \; f \mbox{ bijective } \} .
	\]
	The semigroup operation is given by the composition:
	\[
	fg =f\circ g : g^{-1} (\mbox{Dom}(f) \cap \mbox{Im}(g))\rightarrow f(\mbox{Dom}(f) \cap \mbox{Im}(g)) .
	\]
	This inverse semigroup is a monoid, because it contains the identity map $\mbox{Id}_X :X\rightarrow X$. Also, $\mathcal{I}(X)$ has a zero element, given by the empty map $\emptyset : \emptyset \subseteq X \rightarrow \emptyset \subseteq X$. In fact, this example is paradigmatic, because the Wagner-Preston theorem states that every inverse semigroup can be embedded into $\mathcal{I}(X)$ for some set $X$ \cite{Preston,Wagner}.
	
	We want the definition of quantum inverse semigroups to be a generalization of inverse semigroups in the same sense that Hopf algebras can be thought as a generalization of groups.
	
	\begin{defi} \label{quantuminversesemigroup}
		A  quantum inverse semigroup (QISG) is a triple $(H, \Delta , \mathcal{S})$ in which
		\begin{enumerate}
			\item[(QISG1)] $H$ is a (not necessarily unital) $\K$-ring.
			\item[(QISG2)] 
			$\Delta :H\rightarrow H\otimes H$ is multiplicative and coassociative.
			\item[(QISG3)] $\mathcal{S}:H\rightarrow H$ is a $\K$-linear map, called pseudo-antipode, satisfying 
			\begin{enumerate}
				\item[(i)] $\mathcal{S}(hk)=\mathcal{S}(k)\mathcal{S}(h)$, for all $h,k\in H$.
				\item[(ii)] $I\ast \mathcal{S} \ast I =I$ and $\mathcal{S}\ast I \ast \mathcal{S} =\mathcal{S}$ in the convolution algebra $\mbox{End}_{\K} (H)$.	
			\end{enumerate}
			\item[(QISG4)] The sub $\Bbbk$-rings generated by the images of $I\ast \mathcal{S}$ and $\mathcal{S}\ast I$ mutually commute, that is, for every $h,k\in H$, 
			\[
			h_{(1)} \mathcal{S}(h_{(2)}) \mathcal{S}(k_{(1)})k_{(2)} =\mathcal{S}(k_{(1)})k_{(2)}h_{(1)} \mathcal{S}(h_{(2)}) .
			\]
		\end{enumerate}
		A QISG is unital if $H$ is a unital $\K$-algebra and $\mathcal{S}(1_H )=1_H$. A QISG is co-unital if $H$ is a $\K$-coalgebra and $\varepsilon_H \circ \mathcal{S} =\varepsilon_H$.
	\end{defi}
	
	\begin{rmk}
		There are some natural questions about the previous definition which are worth to consider.
		\begin{enumerate}
			\item[(i)] Contrary to inverse semigroups, in general it is not possible to assure the uniqueness of the pseudo-antipode in QISG. In the case where the idempotents in the convolution algebra $\mbox{End}_{\K} (H)$ commute, then the pseudo-antipode is unique. Indeed, supposing $\mathcal{S}$ and $\overline{\mathcal{S}}$ being two linear endomorphisms in $H$ such that 
			\[
			I\ast \mathcal{S} \ast I =I ; \quad I\ast \overline{\mathcal{S}}\ast I=I ; \quad \mathcal{S}\ast I \ast \mathcal{S} =\mathcal{S} ; \quad \overline{\mathcal{S}}\ast I \ast \overline{\mathcal{S}} =\overline{\mathcal{S}} ,
			\]
			then 
			\begin{eqnarray*}
				\mathcal{S} & = & \mathcal{S} \ast I \ast \mathcal{S} = \mathcal{S} \ast I \ast \overline{\mathcal{S}} \ast I \ast \mathcal{S}  \\
				& = & \overline{\mathcal{S}} \ast I \ast \mathcal{S} \ast I \ast  \mathcal{S} =\overline{\mathcal{S}} \ast I \ast \mathcal{S} \\
				& = & \overline{\mathcal{S}} \ast I \ast \overline{\mathcal{S}} \ast I \ast \mathcal{S} =\overline{\mathcal{S}} \ast I \ast \mathcal{S} \ast I \ast \overline{\mathcal{S}} \\
				& = & \overline{\mathcal{S}} \ast I  \ast \overline{\mathcal{S}} =\overline{\mathcal{S}} .
			\end{eqnarray*}
			\item[(ii)]	Axiom (QISG4) also follows automatically if the QISG $H$ is counital and the idempotents in the convolution algebra $\mbox{Hom}_{\K}(H^{\otimes 2},H)$ commute. Let $e,\overline{e} :H\otimes H \rightarrow H$ defined as
			\[
			e(h\otimes k)=h_{(1)}\mathcal{S}(h_{(2)}) \varepsilon (k) ,\qquad \mbox{ and } \; \overline{e} (h\otimes k) =\varepsilon (h) \mathcal{S}(k_{(1)})k_{(2)} .
			\] 
			It is easy to see that both $e$ and $\overline{e}$ are idempotents with respect to the convolution product. The commutation relation $e\ast \overline{e} =\overline{e}\ast e$ is equivalent to 
			\[
			h_{(1)}\mathcal{S}(h_{(2)})\mathcal{S}(k_{(1)})k_{(2)} =\mathcal{S}(k_{(1)})k_{(2)}h_{(1)}\mathcal{S}(h_{(2)}) .
			\] 
			\item[(iii)] In axiom (QISG3), it is imposed that the pseudo antipode is anti-multiplicative, even though in most examples of QISG it is possible to show this property directly from other intrinsic characteristics of those particular examples. On the other hand, it is not required that the pseudo-antipode is anticomultiplicative, that is, $\Delta \circ \mathcal{S} =(\mathcal{S} \otimes \mathcal{S})\circ \Delta^{op}$. Although this is true in most examples, there are cases where this property is not valid. 
			\item[(iv)] In reference \cite{Li}, the author introduced a notion somewhat related to our QISG, called there as ``weak Hopf algebras''.This notion of a weak Hopf algebra does not correspond to the usual notion of weak Hopf algebra in the literature \cite{BNS}, basically because they were bialgebras, while usual weak Hopf algebras don't satisfy the unitality of the comultiplication nor the multiplicativity of the counit. Despite the fact that the notion of pseudo antipode was introduced there, we must highlight some essential differences between a QISG and the so called ``weak Hopf algebras'' (WHA for short). First, a QISG need not be unital nor counital, while the WHA are bialgebras, then they are unital and counital, therefore, even the algebra of an inverse semigroup could not be, in general, an example of a WHA. In axiom (QISG3) we demanded the antimultiplicativity of the pseudo antipode, while for WHA this condition was not postulated, but it is assumed in many points in order to obtain relevant results. Finally, for a WHA there is nothing similar to axiom (QISG4). 
			\item[(v)] We also acknowledge another similar construction in \cite{Aukhadiev} (although it was not so far published elsewhere), also called quantum inverse semigroups. The difference is that the notion of a quantum inverse semigroup given there is a $C^*$-algebra with a dense bialgebra with a pseudo-antipode satisfying (QISG3). Here we do not demand a QISG to be unital or counital. Also, the author does not demand any condition similar to our axiom (QISG4). 
		\end{enumerate}
	\end{rmk}
	
	\begin{exmp} \label{inversesemigroupalgebra}
		Let $S$ be an inverse semigroup. The inverse semigroup algebra
		\[
		\K S=\{ \sum_{s\in S} a_s \delta_s \; | \; a_s \in \K \}
		\]
		can be endowed with a structure of a counital QISG with
		\[
		\Delta (\delta_s )=\delta_s \otimes \delta_s , \ \ \varepsilon (\delta_s )=1 , \ \  \mathcal{S}(\delta_s )=\delta_{s^*}
		\] 
		When $S$ is an inverse monoid, then $\K S$ is a unital and counital QISG with $1_{\K S}=\delta_{1_S}$. The axiom (QISG4) is automatically satisfied, because the algebras generated by the images $I\ast \mathcal{S}$ and $\mathcal{S}\ast I$ both coincide with $\K E(S)$, which is a commutative algebra.
	\end{exmp}
	
	\begin{exmp} \label{example.affine.scheme}
		An affine inverse semigroup scheme is a functor $\Sigma$ from the category of commutative $\K$-algebras to the category of inverse semigroups whose composition with the forgetful functor $U:\underline{InvSgrp} \rightarrow \underline{Set}$ becomes an affine scheme, that is, a representable functor from the category of algebras to the category of sets. Let $\Sigma$ be an inverse semigroup scheme and $H$ the commutative algebra which represents it, that is, 
		\[
		\Sigma (\underline{\;}) =\mbox{Hom}_{ComAlg} ({H},\underline{\quad}) .
		\]
		The assumption that $\Sigma (A)$ is an inverse semigroup and that for any algebra morphism $\varphi :A\rightarrow B$ induces a semigroup morphism $\Sigma (\varphi ):\Sigma (A) \rightarrow \Sigma (B)$ leads to the conclusion that the multiplications in each  semigroup $S=\Sigma (A)$, define a natural transformation, $m: \Sigma \times \Sigma \Rightarrow \Sigma$. As the functor $S$ is representable, one can write the multiplication as
		\[
		m: \text{Hom}_{ComAlg} (H,\underline{\quad}) \times \text{Hom}_{ComAlg} (H,\underline{\quad}) \Rightarrow \text{Hom}_{ComAlg} (H,\underline{\quad}) ,
		\]
		or yet, via the canonical natural isomorphism
		\[
		\text{Hom}_{ComAlg} (H,\underline{\quad}) \times \text{Hom}_{ComAlg} (H,\underline{\quad}) \cong \text{Hom}_{ComAlg} (H\otimes H,\underline{\quad}),
		\]
		write it as the associated natural transformation
		\[
		\widetilde{m}: \text{Hom}_{ComAlg} (H\otimes H,\underline{\quad}) \Rightarrow \text{Hom}_{ComAlg} (H,\underline{\quad}) .
		\]
		By Yoneda's lemma, this natural transformation induces a morphism of algebras 
		\[
		\Delta :{H}\rightarrow {H}\otimes {H}
		\]
		such that, for each algebra $A$ and each pair of algebra morphisms $x,y: H\rightarrow A$ we have $x \cdot y =m_A (x,y)= (x\otimes y)\circ \Delta$.
		
		In the same way, the pseudo inverse operation can be viewed as a natural transformation  
		$(\underline{\;})^{\ast} :\Sigma^{op}\Rightarrow \Sigma$. Again, by Yoneda's lemma, this natural transformation induces a morphism of algebras (as the algebras are commutative, also an anti-morphism of algebras) $\mathcal{S}: {H}\rightarrow H$. 
		
		Given a commutative algebra $A$, the identities $ss^*s=s$ and $s^*ss^* =s^*$ for each $s\in \mbox{Hom}_{ComAlg}(H,A)$ are equivalent to the expressions $I\ast \mathcal{S} \ast I =I$ and $\mathcal{S} \ast I \ast \mathcal{S} =\mathcal{S}$. Indeed, for any $h\in H$ and for any algebra morphism $s:H\rightarrow A$
		\[
		s(h)=ss^*s (h)=s(h_{(1)})s^* (h_{(2)})s(h_{(3)})=s(h_{(1)})s(\mathcal{S}(h_{(2)}))
		s(h_{(3)}) =s(h_{(1)}\mathcal{S}(h_{(2)})h_{(3)}).
		\] 
		As this equality is valid for every algebra morphism $s:H\rightarrow A$ and for every commutative algebra $A$, we have
		\[
		h=h_{(1)}\mathcal{S}(h_{(2)})h_{(3)}, \forall h\in H.
		\]
		
		Finally, axiom (QISG4) is trivially verified because all algebras are commutative, then, for every $h,k \in H$ the elements $I\ast \mathcal{S} (h)$ and $\mathcal{S}\ast I (k)$ do commute. Therefore, the algebra $H$, representing the affine inverse semigroup scheme is a QISG.
	\end{exmp}
	
	\begin{exmp}
		Given an inverse semigroup $S$, let $H_S$ be the polynomial algebra generated by all the matrix coordinate functions of  isomorphism classes of finite dimensional $\K$-linear representations $\pi$ of $S$, that is
		\[
		H_S=\K [\pi_{i,j} \; | \; \pi :S\rightarrow M_n (\K),\quad 1\leq i,j \leq n ] ,
		\]
		in which $\pi (s) =\left( \pi_{i,j} (s) \right)_{i,j =1}^n$. Define the comultiplication on the generators by
		\[
		\Delta (\pi_{i,j})=\sum_{k=1}^{n} \pi_{i,k} \otimes \pi_{k,j}
		\]
		and extend to an algebra morphism $\Delta :H_S \rightarrow H_S \otimes H_S$ by the universal property of the polynomial algebra. Considering the natural embedding of $H_S\otimes H_S$ as a subalgebra of the algebra of functions from $S\times S$ to $\K$, the comultiplication can be written in the following way:
		\[
		\Delta (\pi_{i,j})(s,t)=\pi_{i,j}(st) =\sum_{k=1}^{n} \pi_{i,k} (s) \pi_{k,j} (t).
		\]
		Also, one can define the pseudo-antipode on the generators as
		\[
		\mathcal{S}(\pi_{i,j} )(s)=\pi_{i,j} (s^* ), \qquad \forall s\in S ,
		\]
		and extend it by the universal property of the polynomial algebra to an algebra morphism $\mathcal{S}:H\rightarrow H$ (which is also an anti-algebra morphism because of the commutativity).
		
		It is easy to verify that $(H_S, \Delta, \mathcal{S})$ is a unital quantum inverse semigroup. The unit of the polynomial algebra can be seen as the constant function $\mathbbm{1}_{H_S}:S\rightarrow \K = M_1 (\K)$ which sends every element of the semigroup $S$ into $1_\K$, and the pseudo-antipode $\mathcal{S}$, as algebra morphism, naturally sends $\mathbbm{1}_{H_S}$ to $\mathbbm{1}_{H_S}$. Axiom (QISG3) can be checked only on generators, then taking a generator $\pi_{i,j}$ with $1\leq i,j\leq n$ and any element $s\in S$, we have
		\begin{eqnarray*}
			I\ast \mathcal{S} \ast I (\pi_{i,j})(s) & = & \sum_{k,l=1}^n \pi_{i,k} (s) \mathcal{S}(\pi_{k,l})(s) \pi_{l,j}(s) \\
			& = & \sum_{k,l=1}^n \pi_{i,k} (s) \pi_{k,l}(s^*) \pi_{l,j}(s) \\
			& = & \pi_{i,j} (s s^* s)\\
			& = & \pi_{i,j}(s). 
		\end{eqnarray*}
		Therefore $I\ast \mathcal{S} \ast I =I$. Similar reasoning for $\mathcal{S}\ast I\ast \mathcal{S} =\mathcal{S}$. Axiom (QISG4) is satisfied because the algebra $H_S$ is commutative.
	\end{exmp}
	
	\begin{exmp}
		Every Hopf algebra $(H,\mu , \eta , \Delta, \varepsilon ,S)$ is a unital and counital QISG. The axiom (QISG4) follows from the antipode axiom in the Hopf algebra, then the images of $I\ast S = S\ast I= \eta \circ \varepsilon$ are contained in the commutative subalgebra $\K \cdot\mathbbm{1}_H$.
	\end{exmp}
	
	\begin{exmp}\label{weak}
		Every  weak Hopf algebra is a QISG. A weak Hopf algebra \cite{BNS} is a sextuple $(H,\mu , \eta , \Delta, \varepsilon ,S)$ such that $(H,\mu,\eta)$ is a unital algebra and $(H,\Delta,\varepsilon)$ is a coalgebra. The comultiplication $\Delta :H\rightarrow H\otimes H$ satisfies 
		\[
		\Delta (hk)=\Delta (h) \Delta (k) , \qquad (\Delta (1) \otimes 1)(1\otimes \Delta (1)) =(1\otimes \Delta(1))(\Delta (1)\otimes 1)=(\Delta \otimes I)\circ\Delta (1) .
		\]
		The counit $\varepsilon :H\rightarrow \K$ satisfies $\varepsilon (hkl)=\varepsilon (hk_{(1)})\varepsilon (k_{(2)}l)=\varepsilon (hk_{(2)})\varepsilon (k_{(1)}l)$. The antipode $S:H\rightarrow H$ in a weak Hopf algebra satisfies the following axioms:
		\begin{eqnarray*}
			h_{(1)}S(h_{(2)}) & = & \varepsilon_t (h)=\varepsilon (1_{(1)}h)1_{(2)} ,\\
			S(h_{(1)})h_{(2)} & = & \varepsilon_s (h)=1_{(1)} \varepsilon (h1_{(2)}) ,\\
			S(h_{(1)})h_{(2)}S(h_{(3)}) & = & S(h).
		\end{eqnarray*}	
		With these axioms, one can prove that $S$ is an algebra antimorphism and a coalgebra antimorphism. Moreover, for any $h\in H$,
		\[
		h_{(1)}S(h_{(2)})h_{(3)}=\varepsilon (1_{(1)}h_{(1)})1_{(2)}h_{(2)} =\varepsilon (h_{(1)})h_{(2)}=h.
		\]
		Finally, for every $h,k \in H$,
		\begin{eqnarray*}
			h_{(1)}S(h_{(2)})S(k_{(1)})k_{(2)} & = & \varepsilon (1_{(1)}h)1_{(2)} 1_{(1')} \varepsilon (k1_{(2')}) \\
			& = & \varepsilon (1_{(1)}h) 1_{(1')} 1_{(2)} \varepsilon (k1_{(2')}) \\
			& = & S(k_{(1)})k_{(2)}h_{(1)}S(h_{(2)}).
		\end{eqnarray*}	 
		Therefore, the axiom (QISG4) is satisfied, turning the weak Hopf algebra $H$ into a QISG. 
	\end{exmp}
	
	\begin{exmp}
		A nontrivial example of a Quantum Inverse Semigroup was inspired in the work of Theodor Banica and Adam Skalski \cite{Banica} on a quantum version of Hadamard's matrices. Consider the polynomial $\K$-algebra generated by the set $\{ u_{ij} \; | \; 1\leq i,j,\leq n \}$ and then consider the quotient
		\[
		H=\K [ u_{ij} | 1\leq i,j \leq n ]/\mathcal{I} ,
		\]
		in which $\mathcal{I}$ is the ideal generated by elements of the type
		\begin{enumerate}
			\item $u_{ij} u_{ik}-\delta_{j,k} u_{ij}$ ,
			\item $u_{ij} u_{kj}-\delta_{i,k} u_{ij}$ .
		\end{enumerate}
		Defining the function
		\[
		\begin{array}{rccc} \widetilde{\Delta} :& \{ u_{ij} \}_{1\leq i,j \leq n } & \rightarrow & H\otimes H \\
			\, & u_{ij} & \mapsto & \sum_{k=1}^n u_{ik} \otimes u_{kj} \end{array} ,
		\]
		one can lift it to a morphism of algebras $\overline{\Delta} : \K [ u_{ij} |1\leq i,j \leq n ] \rightarrow  H\otimes H$ doing the same on generators. We need to check that $\overline{\Delta}(\mathcal{I}) \subseteq \mathcal{I} \otimes \K [ u_{ij} |1\leq i,j \leq n ] +\K [u_{ij} |1\leq i,j \leq n ] \otimes \mathcal{I}$. Indeed, 
		\begin{eqnarray*}
			\overline{\Delta} (u_{ij} u_{ik}-\delta_{j,k} u_{ij}) & = & \sum_{p,q =1}^n u_{ip}u_{iq} \otimes u_{pj}u_{qk} -\sum_{p=1}^n \delta_{j,k} u_{ip} \otimes u_{pj} \\
			& = & \sum_{p,q =1}^n u_{ip}u_{iq} \otimes u_{pj}u_{qk} -
			\sum_{p,q=1}^n \delta_{p,q} u_{ip} \otimes u_{pj}u_{qk} \\
			& \, & +\sum_{p,q=1}^n \delta_{p,q} u_{ip} \otimes u_{pj}u_{qk} -\sum_{p=1}^n \delta_{j,k} u_{ip} \otimes u_{pj} \\
			& = & \sum_{p,q =1}^n (u_{ip}u_{iq}-\delta_{p,q} u_{ip}) \otimes u_{pj}u_{qk} + \sum_{p=1}^n u_{ip} \otimes (u_{pj} u_{pk} -\delta_{j,k} u_{pj}) ,
		\end{eqnarray*}
		analogous for $\overline{\Delta} (u_{ij}u_{kj} -\delta_{i,k} u_{ij})$. 
		Therefore, there is a well-defined algebra map
		$\Delta :H\rightarrow H\otimes H$ defined on generators as $\Delta (u_{ij})= \sum_{k=1}^n u_{ik}\otimes u_{kj}$.
		
		Also, one can define a function 
		\[
		\begin{array}{rccl} \widetilde{S} :& \{ u_{ij} \}_{1\leq i,j \leq n } & \rightarrow & H=H^{op} \\
			\, & u_{ij} & \mapsto & u_{ji} \end{array} ,
		\]
		also, lifting to an algebra morphism $\overline{S}:\K [u_{ij} | 1\leq i,j \leq n ] \rightarrow H^{op}$. It is easy to see that $\overline{S}(\mathcal{I})\subseteq \mathcal{I}$, therefore we have a well-defined algebra morphism $S:H\rightarrow H=H^{op}$.
		
		Let us verify that $(H,\Delta , S)$ defined as above is indeed a QISG. First note that
		\[
		I\ast S (u_{ij})  =  \sum_{k=1}^n u_{ik}S(u_{kj})  =  \sum_{k=1}^n u_{ik}u_{jk} =  \delta_{i,j} \sum_{k=1}^n u_{ik}
		\]
		and
		\[
		S\ast I (u_{ij}) =  \sum_{k=1}^n S(u_{ik})u_{kj}  =  \sum_{k=1}^n u_{ki}u_{kj}  =  \delta_{i,j} \sum_{k=1}^n u_{kj}.
		\]
		Then, we have
		\begin{eqnarray*}
			I\ast S \ast I (u_{i_1 j_1} \ldots u_{i_N j_N}) & = & \sum_{k_1,l_1=1}^n \ldots \sum_{k_N,l_N=1}^n u_{i_1 k_1} \ldots u_{i_N k_N} S(u_{k_1 l_1} \ldots 
			u_{k_N l_N}) u_{l_1 j_1} \ldots u_{l_N j_N} \\
			& = &  \sum_{k_1,l_1=1}^n \ldots \sum_{k_N,l_N=1}^n u_{i_1 k_1} \ldots u_{i_N k_N} u_{l_N k_N} \ldots 
			u_{l_1 k_1} u_{l_1 j_1} \ldots u_{l_N j_N}  \\
			& = &  \sum_{k_1,l_1=1}^n \ldots \sum_{k_N,l_N=1}^n u_{i_1 k_1} \ldots u_{i_N k_N} \delta_{i_1 , l_1} \ldots 
			\delta_{i_N ,l_N} u_{l_1 j_1} \ldots u_{l_N j_N}  \\
			& = &  \sum_{k_1=1}^n \ldots \sum_{k_N=1}^n u_{i_1 k_1} \ldots u_{i_N k_N}  u_{i_1 j_1} \ldots u_{i_N j_N}  \\
			& = &  \sum_{k_1=1}^n \ldots \sum_{k_N=1}^n \delta_{j_1 k_1} \ldots \delta_{j_N k_N}  u_{i_1 j_1} \ldots u_{i_N j_N}  \\
			& = & u_{i_1 j_1} \ldots u_{i_N j_N}.
		\end{eqnarray*}
		leading to $I\ast S \ast I=I$ and, analogously, $S\ast I \ast S =S$. The elements of the form $I\ast S (h)$, naturally commute with elements of the form $S\ast I (k)$ due to the commutativity of $H$, satisfying (QISG4). Therefore $(H,\Delta , S)$ is a QISG. 
		
		Moreover, this QISG is unital and counital: first, it is unital because $H$ is a unital algebra and, by construction, $S(1_H)=1_H$. Also, it is counital because one can define a function $\widetilde{\epsilon} :\{u_{ij} \}_{1\leq i,j \leq n} \rightarrow \K$ given by $\widetilde{\epsilon}(u_{ij})=\delta_{i,j}$, this can be lifted to an algebra morphism $\overline{\epsilon}:\K [u_{ij}|1\leq i,j \leq n] \rightarrow \K$ doing the same. It is straightforward to verify that $\overline{\epsilon}(\mathcal{I})=0$, therefore, there exists an algebra morphism $\epsilon :H\rightarrow H$, making, in particular, $H$ to be a commutative bialgebra. It is also easy to check that $S\circ \epsilon =\epsilon$. Note that $H$ is an example of a QISG which not a Hopf algebra, because $I\ast S (u_{ij}) \neq \delta_{i,j} 1_H=\epsilon (u_{ij})1_H$ neither a weak Hopf algebra, because $\Delta (1_H) =1_H \otimes 1_H$. One can see also that $H$ is not a QISG comin from the structure of an inverse semigroup algebra, as in Example \ref{inversesemigroupalgebra}. Even though the set of monomials $u_{i_1 j_1} \ldots u_{i_n j_n}$ form an inverse monoid, beeing the peseudo inverse $(u_{i_1 j_1} \ldots u_{i_n j_n})^*$ equal to $u_{i_1 j_1} \ldots u_{i_n j_n}$ itself, the comultiplication map for an inverse semigroup algebra to be a QISG is done making all the elements of the semigroup group-like, which is not the case of the comultiplication defined in $H$.
	\end{exmp}
	
	\subsection{QISG and partial representations}
	
	Partial representations of Hopf algebras were introduced in \cite{ABV} as an extension of the concept of partial representation of a group. 
	
	\begin{defi} \cite{ABV,ABCQV}
		Let $H$ be a Hopf $\K$-algebra, and let $B$ be a unital $\K$-algebra. A partial representation of $H$ in $B$ is a linear map $\pi: H \rightarrow B$ such that 
		\begin{enumerate}
			\item[(PR1)] $\pi (1_H)  =  1_B$, 
			\item[(PR2)] $\pi (h) \pi (k_{(1)}) \pi (S(k_{(2)}))  =   \pi (hk_{(1)}) \pi (S(k_{(2)})) $, for every $h,k\in H$.
			\item[(PR3)] $\pi (h_{(1)}) \pi (S(h_{(2)})) \pi (k)  =   \pi (h_{(1)}) \pi (S(h_{(2)})k)$, for every $h,k\in H$.
		\end{enumerate}
	\end{defi} 
	The original definition of partial representation in \cite{ABV} included two more axioms, but later it was proved that these follow from the previous ones. 
	
	\begin{prop}
		Let $H$ be a Hopf $\K$-algebra, and let $B$ be a unital $\K$-algebra. Every partial representation $\pi: H \rightarrow B$ satisfies the properties
		\begin{enumerate}
			\item[(PR4)] $\pi (h) \pi (S(k_{(1)})) \pi (k_{(2)}) = \pi (hS(k_{(1)})) \pi (k_{(2)})$, for every $h,k\in H$.
			\item[(PR5)] $\pi (S(h_{(1)}))\pi (h_{(2)}) \pi (k) = \pi (S(h_{(1)}))\pi (h_{(2)} k)$, for every $h,k\in H$.   
		\end{enumerate}
		Moreover, every linear map $\pi : H \to B$ that satisfies (PR1), (PR4) and (PR5) also satisfies (PR2) and (PR3). 
	\end{prop}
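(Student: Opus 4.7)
The plan is to insert an extension of one factor via a carefully chosen three-term ``helper'' expansion and then collapse it along two different routes, yielding two equal forms of the target identity; this sandwich trick handles both implications symmetrically. The key auxiliary statements are the two helper identities
\begin{align*}
\text{(A)} \quad & \pi(a_{(1)})\pi(S(a_{(2)}))\pi(a_{(3)}) = \pi(a), \\
\text{(B)} \quad & \pi(S(a_{(1)}))\pi(a_{(2)})\pi(S(a_{(3)})) = \pi(S(a)),
\end{align*}
valid for all $a\in H$. Identity (A) follows by applying (PR3) with $h:=a$ and $k:=a_{(3)}$, using the antipode identity $\sum a_{(1)}\otimes S(a_{(2)})a_{(3)} = a\otimes 1$ and then (PR1); identity (B) follows analogously from (PR2) together with $\sum S(a_{(1)})a_{(2)}\otimes a_{(3)} = 1\otimes a$. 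For the converse direction, the same identities are obtained from (PR4) and (PR5) by parallel computations (using $\sum a_{(1)}S(a_{(2)})\otimes a_{(3)} = 1\otimes a$ and $\sum a_{(1)}\otimes a_{(2)}S(a_{(3)}) = a\otimes 1$ respectively).

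To derive (PR4) from (PR1)--(PR3), I would consider the $4$-fold Sweedler expression
\[
E := \pi(hS(k_{(1)}))\pi(k_{(2)})\pi(S(k_{(3)}))\pi(k_{(4)}).
\]
Applying helper (A) to the last three factors (treating $k_{(2)}\otimes k_{(3)}\otimes k_{(4)}$ as $\Delta^{(2)}$ of the appropriate sub-Sweedler component of $k$) collapses them to $\pi(k_{(2)})$ in $2$-fold Sweedler, so $E$ equals the right-hand side of (PR4). On the other hand, applying (PR2) to the first three factors of $E$ with $h':=hS(k_{(1)})$ and $k'$ chosen so that $\Delta(k') = k_{(2)}\otimes k_{(3)}$ rewrites $E$ as $\pi(hS(k_{(1)})k_{(2)})\pi(S(k_{(3)}))\pi(k_{(4)})$; the antipode identity $\sum S(k_{(1)})k_{(2)}\otimes k_{(3)}\otimes k_{(4)} = 1\otimes k_{(1)}\otimes k_{(2)}$ then collapses this to the left-hand side of (PR4). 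Equating the two expressions for $E$ yields (PR4). The derivation of (PR5) is the mirror argument, using $F := \pi(S(h_{(1)}))\pi(h_{(2)})\pi(S(h_{(3)}))\pi(h_{(4)}k)$, with helper (B) in place of (A) and (PR3) in place of (PR2); the rewriting step replaces $\pi(h_{(2)})\pi(S(h_{(3)}))\pi(h_{(4)}k)$ by $\pi(h_{(2)})\pi(S(h_{(3)})h_{(4)}k)$ via (PR3), and the antipode-counit identity $\sum h_{(3)}S(h_{(4)})\otimes(\ldots)$ then yields the left-hand side of (PR5).

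For the converse direction, the same sandwich technique yields (PR2) and (PR3) from (PR1), (PR4), (PR5). To prove (PR2), I would start from $G := \pi(hk_{(1)})\pi(S(k_{(2)}))\pi(k_{(3)})\pi(S(k_{(4)}))$: helper (B) applied to the last three factors returns the right-hand side of (PR2), while (PR4) applied to the first three factors followed by $\sum k_{(1)}S(k_{(2)})\otimes k_{(3)}\otimes k_{(4)} = 1\otimes k_{(1)}\otimes k_{(2)}$ returns the left-hand side. Axiom (PR3) follows symmetrically from the sandwich $\pi(h_{(1)})\pi(S(h_{(2)}))\pi(h_{(3)})\pi(S(h_{(4)})k)$, using helper (A) on the first three factors and (PR5) on the last three.

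The main technical obstacle will be the careful bookkeeping of Sweedler indices when passing between $2$-fold and $4$-fold comultiplications, since each use of (PR2)--(PR5) requires its middle pair of factors to come from a genuine comultiplication $\Delta$ of some element, a condition ensured by coassociativity applied to the appropriate sub-component (for instance, $k_{(2)}\otimes k_{(3)}$ inside the $4$-fold Sweedler of $k$ equals $\Delta$ of a legitimate element in the $3$-fold expansion). Once these index conventions are fixed, every remaining step is a direct application of a PR axiom, of (PR1), or of a standard antipode/counit identity.
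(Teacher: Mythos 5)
The paper itself gives no proof of this proposition — it is quoted from \cite{ABV,ABCQV} — so there is no in-paper argument to compare against; judged on its own, your proposal is correct. The helper identities (A) and (B) and the four-factor sandwich expressions do exactly what you claim, the one point needing care being the one you flag: each of (PR2)--(PR5), holding for all $h,k$, defines a vanishing bilinear map and therefore may be applied to entangled tensors such as $\sum hS(k_{(1)})\otimes k_{(2)}$ sitting inside a longer Sweedler string, which legitimizes every collapse you perform. (One typographical slip: in the (PR5) step the counit identity actually used is $\sum S(h_{(3)})h_{(4)}=\varepsilon(h_{(3)})1_H$, not $\sum h_{(3)}S(h_{(4)})$.) This is essentially the standard argument from the cited references.
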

	
	\begin{defi} \label{hpar} \cite{ABV} Let $H$ be a Hopf algebra 
		and let $T(H)$ be the tensor algebra of the vector space $H$. The partial Hopf algebra $H_{par}$ is the quotient
		of $T(H)$ by the ideal $I$ generated by elements of the form 
		\begin{enumerate}
			\item $1_H - 1_{T(H)}$; 
			\item $h \otimes k_{(1)} \otimes S(k_{(2)}) - hk_{(1)} \otimes S(k_{(2)})$, for all $h,k \in H$;
			\item $h_{(1)} \otimes S(h_{(2)}) \otimes k - h_{(1)} \otimes S(h_{(2)})k$, for all $h,k \in H$;
		\end{enumerate}
	\end{defi}
	
	Denoting the class of $h\in H$ in $H_{par}$ by $[h]$, it is easy to see that the map 
	\[
	\begin{array}{rccc} [\underline{\; }]:& H & \rightarrow & H_{par} \\
		\, & h & \mapsto & [h] \end{array}
	\]
	is a partial representation of the Hopf algebra $H$ on $H_{par}$.
	
	The partial Hopf algebra $H_{par}$ has the following universal property: for every partial representation $\pi : H\rightarrow B$, there is a unique morphism of algebras $\overline{\pi}:H_{par} \rightarrow B$ such that $\pi =\overline{\pi} \circ [\underline{\; }]$. In \cite{ABV}, it was shown that $H_{par}$ has the structure of a Hopf algebroid over the base algebra 
	\[
	A_{par} (H) =\langle  \varepsilon_h =[h_{(1)}][S(h_{(2)})] \; | \; h\in H \rangle .
	\] 
	When $H$ is a co-commutative Hopf algebra, things become much simpler. For instance, the base algebra $A_{par}$ is commutative. In this case, the following result is valid for the universal algebra $H_{par}$. For $H$ being a group algebra, $\Bbbk G$ the partial algebra $H_{par}$ is just the partial group algebra $\Bbbk_{par} G$ \cite{DEP}, which is the inverse semigorup algebra $\Bbbk S(G)$. For $H$ being the universal enveloping algebra $\mathcal{U} (\mathfrak{g})$ of a Lie algebra $\mathfrak{g}$, the partial algebra $H_{par}$ coincides with the algebra $H$ itself, since every partial reresentation of $H$ is global \cite{ABV}. 
	
	\begin{thm}
		Let $H$ be a co-commutative Hopf algebra over a field $\K$. Then the partial Hopf algebra $H_{par}$ has the structure of a unital QISG.
	\end{thm}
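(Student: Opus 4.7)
The plan is to invoke the universal property of $H_{par}$ (Definition \ref{hpar}) twice: once to construct the comultiplication $\Delta$ and once to construct the pseudo-antipode $\mathcal{S}$. For $\Delta$, I consider the linear map $\pi: H \to H_{par}\otimes H_{par}$ given by $\pi(h) = [h_{(1)}]\otimes[h_{(2)}]$ and claim that $\pi$ is a partial representation. Axiom (PR1) is immediate from relation (1) of Definition \ref{hpar}. For (PR2), expanding $\pi(h)\pi(k_{(1)})\pi(S(k_{(2)}))$ and using $\Delta\circ S = (S\otimes S)\circ\Delta$ (which holds because $H$ is cocommutative) produces a sum involving four Sweedler indices from $\Delta^{(3)}(k)$. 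Cocommutativity makes $\Delta^{(3)}(k)$ invariant under the $S_4$-action on tensor factors; swapping the middle two indices brings each tensor factor into the form $[x][y_{(1)}][S(y_{(2)})]$, to which relation (2) of Definition \ref{hpar} applies tensor-factor-wise --- this step is justified because the inner Sweedler sums over $k_{(1)}$ and $k_{(2)}$ factor for fixed outer Sweedler components. Undoing the swap then yields $\pi(hk_{(1)})\pi(S(k_{(2)}))$. Axiom (PR3) is symmetric, using relation (3). The universal property produces a unique algebra morphism $\Delta\colon H_{par}\to H_{par}\otimes H_{par}$ with $\Delta([h]) = [h_{(1)}]\otimes[h_{(2)}]$, and coassociativity is inherited from $H$.

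For the pseudo-antipode, I consider $\sigma\colon H \to H_{par}^{op}$, $\sigma(h) = [S(h)]$, and verify it is a partial representation. The essential ingredients are $S^2 = \mbox{id}_H$ (valid in the cocommutative case) and cocommutativity to rearrange Sweedler indices; axioms (PR2) and (PR3) for $\sigma$ reduce to applications of relations (2) and (3) of $H_{par}$. The universal property then delivers an anti-algebra morphism $\mathcal{S}\colon H_{par}\to H_{par}$ with $\mathcal{S}([h]) = [S(h)]$. Anti-multiplicativity and $\mathcal{S}(1_{H_{par}}) = 1_{H_{par}}$ hold by construction, giving unitality of the QISG.

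For axiom (QISG3)(ii), on a generator $[h]$ I compute
\[
(I\ast\mathcal{S}\ast I)([h]) = [h_{(1)}][S(h_{(2)})][h_{(3)}] = [h_{(1)}][S(h_{(2)})h_{(3)}] = \varepsilon(h_{(2)})[h_{(1)}] = [h],
\]
using relation (3), the antipode identity $S(h_{(2)})h_{(3)} = \varepsilon(h_{(2)})1_H$, and counitality. A symmetric computation via relation (2) and $S(h_{(1)})h_{(2)} = \varepsilon(h_{(1)})1_H$ gives $(\mathcal{S}\ast I\ast\mathcal{S})([h]) = [S(h)]$. Extending these linear identities from the generators $\{[h]\}$ to all of $H_{par}$ is a routine bookkeeping exercise using the multiplicativity of $\Delta$ and the anti-multiplicativity of $\mathcal{S}$. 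For axiom (QISG4), I exploit $\Delta\circ S = (S\otimes S)\circ\Delta$ and $S^2 = \mbox{id}$ in the cocommutative case: substituting $h = S(h')$ yields
\[
(I\ast\mathcal{S})([S(h')]) = [S(h'_{(1)})][S^2(h'_{(2)})] = [S(h'_{(1)})][h'_{(2)}] = (\mathcal{S}\ast I)([h']).
\]
Since $S$ is bijective on $H$, the images of $I\ast\mathcal{S}$ and $\mathcal{S}\ast I$ in $H_{par}$ coincide, and their common image is the base algebra $A_{par}(H)$, which is commutative for cocommutative $H$ (as noted in the text preceding the theorem). Hence the commutativity requirement in (QISG4) is automatic.

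The main technical obstacle is the verification of (PR2) and (PR3) for $\pi$: one must carefully manipulate a correlated four-fold Sweedler sum, applying relations of $H_{par}$ within each tensor factor while respecting the correlation of indices across the two factors; this is precisely the step where the $S_4$-symmetry of $\Delta^{(3)}(k)$ provided by cocommutativity of $H$ becomes essential, and it is this symmetry that ultimately upgrades the general bialgebroid data carried by $H_{par}$ into the QISG structure described.
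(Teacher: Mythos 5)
Your construction of $\Delta$ and $\mathcal{S}$ via the universal property of $H_{par}$ (checking that $h\mapsto [h_{(1)}]\otimes[h_{(2)}]$ and $h\mapsto [S(h)]$ are partial representations into $H_{par}\otimes H_{par}$ and $H_{par}^{op}$ respectively) is exactly the paper's approach, and your verification of (QISG3)(ii) on a single generator $[h]$ is correct. The problem is the sentence ``Extending these linear identities from the generators $\{[h]\}$ to all of $H_{par}$ is a routine bookkeeping exercise using the multiplicativity of $\Delta$ and the anti-multiplicativity of $\mathcal{S}$.'' This is a genuine gap, not bookkeeping: the maps $I\ast\mathcal{S}\ast I$ and $\mathcal{S}\ast I\ast\mathcal{S}$ are convolution products of an algebra map with an anti-algebra map, and such convolutions are \emph{not} multiplicative. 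Concretely,
\[
(I\ast\mathcal{S}\ast I)([h][k])=[h_{(1)}][k_{(1)}]\,[S(k_{(2)})][S(h_{(2)})]\,[h_{(3)}][k_{(3)}],
\]
and the middle factors are interleaved in the wrong order, so knowing $(I\ast\mathcal{S}\ast I)([h])=[h]$ and $(I\ast\mathcal{S}\ast I)([k])=[k]$ tells you nothing directly about this expression. Reordering it requires the commutation identity $[h]\varepsilon_k=\varepsilon_{h_{(1)}k}[h_{(2)}]$ (where $\varepsilon_k=[k_{(1)}][S(k_{(2)})]$) together with the mutual commutativity of the elements $\varepsilon_x$, and then an induction on the length of the word $[h^1]\cdots[h^n]$. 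This is precisely where the paper spends most of its effort, and it is the technical heart of the theorem; without it the proof is incomplete.

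The same gap affects your treatment of (QISG4). Your computation $(I\ast\mathcal{S})([S(h')])=(\mathcal{S}\ast I)([h'])$ and the observation that $I\ast\mathcal{S}([h])=\varepsilon_h\in A_{par}(H)$ only cover single generators. To conclude that the subalgebras generated by the images of $I\ast\mathcal{S}$ and $\mathcal{S}\ast I$ commute, you must show that $I\ast\mathcal{S}([h^1]\cdots[h^n])$ and $\mathcal{S}\ast I([h^1]\cdots[h^n])$ lie in the (commutative) subalgebra generated by the $\varepsilon_x$ for \emph{arbitrary} words, which again requires repeatedly applying $[h]\varepsilon_k=\varepsilon_{h_{(1)}k}[h_{(2)}]$ to collapse the interleaved products into $\varepsilon_{h^1_{(1)}\cdots h^{n-1}_{(1)}h^n}\,\varepsilon_{h^1_{(2)}\cdots h^{n-2}_{(2)}h^{n-1}_{(2)}}\cdots\varepsilon_{h^1_{(n)}}$, as the paper does. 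In short: prove the commutation lemma for the idempotents $\varepsilon_h$, and then run the induction on word length for both (QISG3)(ii) and (QISG4); these steps cannot be waved away.
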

	
	\begin{proof}
		First, one needs to define a comultiplication $\Delta : H_{par} \rightarrow H_{par} \otimes H_{par}$ which is multiplicative. For this, define the linear map
		\[
		\delta :  H  \rightarrow  H_{par}\otimes H_{par}, \ \ \ \ 
		h  \mapsto      [h_{(1)}]\otimes [h_{(2)}].
		\]
		One can prove that the map $\delta$ is a partial representation of $H$. For example, let us verify axiom (PR2):
		\begin{eqnarray*}
			\delta (h) \delta (k_{(1)}) \delta (S(k_{(2)})) & = & [h_{(1)}][k_{(1)}][S(k_{(4)})] \otimes [h_{(2)}][k_{(2)}][S(k_{(3)})] \\
			& = & [h_{(1)}][k_{(1)}][S(k_{(4)})] \otimes [h_{(2)}k_{(2)}][S(k_{(3)})] \\
			& = & [h_{(1)}][k_{(1)}][S(k_{(2)})] \otimes [h_{(2)}k_{(3)}][S(k_{(4)})] \\
			& = & [h_{(1)}k_{(1)}][S(k_{(2)})] \otimes [h_{(2)}k_{(3)}][S(k_{(4)})] \\
			& = & [h_{(1)}k_{(1)}][S(k_{(4)})] \otimes [h_{(2)}k_{(2)}][S(k_{(3)})] \\
			& = & \delta (hk_{(1)}) \delta (S(k_{(2)})) . 
		\end{eqnarray*}	
		Therefore, there exists a unique algebra morphism $\Delta : H_{par} \rightarrow H_{par} \otimes H_{par}$ given by
		\[
		\Delta ([h^1]\ldots [h^n]) =[h^1_{(1)}]\ldots [h^n_{(1)}] \otimes [h^1_{(2)}]\ldots [h^n_{(2)}] .
		\]
		In order to define the pseudo-antipode, consider the linear map 
		\[
		\widetilde{S} :  H  \rightarrow  H_{par}^{op} \ \ \ \ 
		h  \mapsto      [S(h)].              
		\]
		For every $h,k\in H$, we have
		\begin{eqnarray*}
			\widetilde{S} (h) \cdot_{op} \widetilde{S} (k_{(1)}) \cdot_{op} \widetilde{S} (S(k_{(2)})) & = & [S(h)] \cdot_{op} [S(k_{(1)})] \cdot_{op} [S(S(k_{(2)}))] \\
			& = & [S(S(k)_{(1)})][S(k)_{(2)}][S(h)] \\
			& = & [S(S(k)_{(1)})][S(k)_{(2)}S(h)] \\
			& = & [S(S(k_{(2)}))][S(hk_{(1)})] \\
			& = & [S(hk_{(1)})] \cdot_{op} [S(S(k_{(2)}))]\\
			& = & \widetilde{S} (hk_{(1)}) \cdot_{op} \widetilde{S}(S(k_{(2)})),
		\end{eqnarray*}	
		and the other axioms of partial representations are easily verified in the same way. Therefore $\widetilde{S}$ is a partial representation of $H$ in $H_{par}^{op}$, inducing a morphism of algebras $\mathcal{S} :H_{par} \rightarrow H_{par}^{op}$, or equivalently, an anti-morphism of algebras $\mathcal{S} :H_{par} \rightarrow H_{par}$ given by
		\[
		\mathcal{S} ([h^1]\ldots [h^n]) = [S(h^n)]\ldots [S(h^1)] .
		\]
		In order to verify the identities $I\ast \mathcal{S} \ast I =I$ and $\mathcal{S} \ast I \ast \mathcal{S}$, first note that, for any $h,k \in H$
		\begin{eqnarray*}
			[h]\varepsilon_k & = & [h][k_{(1)}][S(k_{(2)})] = [hk_{(1)}][S(k_{(2)})] \\
			& = & [h_{(1)}k_{(1)}][S(h_{(2)}k_{(2)})] [h_{(3)}k_{(3)}] [S(k_{(4)})] \\
			& = & [h_{(1)}k_{(1)}][S(h_{(2)}k_{(2)})] [h_{(3)}k_{(3)}S(k_{(4)})] \\
			& = & [h_{(1)}k_{(1)}][S(h_{(2)}k_{(2)})] [h_{(3)}] \\
			&= & \varepsilon_{h_{(1)}k}[h_{(2)}] .
		\end{eqnarray*}
		This implies, in particular, that the elements $\varepsilon_h$ do commute \cite{ABV}. Indeed,
		\begin{eqnarray*}
			\varepsilon_h \varepsilon_k & = & [h_{(1)}][S(h_{(2)})]\varepsilon_k =[h_{(1)}]\varepsilon_{S(h_{(3)})k}[S(h_{(2)})]\\
			& = & \varepsilon_{h_{(1)}S(h_{(4)})k}[h_{(2)}][S(h_{(3)})]=\varepsilon_{h_{(1)}S(h_{(2)})k}[h_{(3)}][S(h_{(4)})]\\
			& = & \varepsilon_k[h_{(1)}][S(h_{(2)})] =\varepsilon_k \varepsilon_h.
		\end{eqnarray*}
		Let us prove the identity $I\ast \mathcal{S} \ast I ([h^1]\ldots [h^n])=[h^1]\ldots [h^n]$ by induction on $n\geq1$. For $n=1$, we have
		\[
		I\ast \mathcal{S}\ast I ([h]) =[h_{(1)}][S(h_{(2)})][h_{(3)}] =[h_{(1)}S(h_{(2)})][h_{(3)}]=[h].
		\]
		Assume the claim valid for $n$, then
		\begin{eqnarray*}
			I\ast \mathcal{S} \ast I ([h^1]\ldots [h^{n+1}]) & = & [h^1_{(1)}]\ldots [h^{n+1}_{(1)}] [S(h^{n+1}_{(2)})]\ldots [S(h^{1}_{(2)})] [h^1_{(3)}]\ldots [h^{n+1}_{(3)}] \\
			& = & [h^1_{(1)}]\ldots [h^{n}_{(1)}] \varepsilon_{h^{n+1}_{(1)}}[S(h^{n}_{(2)})]\ldots [S(h^{1}_{(2)})] [h^1_{(3)}]\ldots [h^n_{(3)}][h^{n+1}_{(3)}] \\
			& = & \varepsilon_{h^1_{(1)}\ldots h^{n}_{(1)}h^{n+1}_{(1)}}[h^1_{(2)}]\ldots [h^{n}_{(2)}] [S(h^{n}_{(3)})]\ldots [S(h^{1}_{(3)})] [h^1_{(4)}]\ldots [h^n_{(4)}][h^{n+1}_{(3)}] \\
			& = & \varepsilon_{h^1_{(1)}\ldots h^{n}_{(1)}h^{n+1}_{(1)}}[h^1_{(2)}]\ldots [h^{n}_{(2)}] [h^{n+1}_{(2)}] \\
			& = & [h^1]\ldots [h^{n}] \varepsilon_{h^{n+1}_{(1)}}[h^{n+1}_{(2)}] \\
			& = & [h^1]\ldots [h^{n}] [h^{n+1}_{(1)}] [S(h^{n+1}_{(2)})][h^{n+1}_{(3)}] \\
			& = & [h^1]\ldots [h^{n+1}] .
		\end{eqnarray*}
		For the identity $\mathcal{S}\ast I \ast \mathcal{S}=\mathcal{S}$, consider $[h^1]\ldots [h^n] \in H_{par}$, then
		\begin{eqnarray*}
			\mathcal{S}\ast I \ast \mathcal{S}([h^1]\ldots [h^n]) & = & [S(h^n_{(1)})] \ldots [S(h^1_{(1)})] [h^1_{(2)}]\ldots [h^n_{(2)}][S(h^n_{(3)})] \ldots [S(h^1_{(3)})]\\
			& = & [S(h^n_{(3)})] \ldots [S(h^1_{(3)})] [S(S(h^1_{(2)}))]\ldots [S(S(h^n_{(2)}))][S(h^n_{(1)})] \ldots [S(h^1_{(1)})]\\ 
			& = & [S(h^n)_{(1)}] \ldots [S(h^1)_{(1)}] [S(S(h^1)_{(2)})]\ldots [S(S(h^n)_{(2)})][S(h^n)_{(3)}] \ldots [S(h^1)_{(3)}]\\
			& = & [S(h^n)] \ldots [S(h^1)] \\
			&= & \mathcal{S}([h^1]\ldots [h^n]).
		\end{eqnarray*}	
		
		Finally, in order to verify Axiom (QISG4), note that 
		\begin{eqnarray*}
			I\ast \mathcal{S} ([h^1]\ldots [h^n]) & = & [h^1_{(1)}]\ldots [h^n_{(1)}] [S(h^n_{(2)})]\ldots [S(h^1_{(2)})] \\
			& = & [h^1_{(1)}]\ldots [h^{n-1}_{(1)}] \varepsilon_{h^n}[S(h^{n-1}_{(2)})]\ldots [S(h^1_{(2)})] \\
			& = & \varepsilon_{h^1_{(1)}\ldots h^{n-1}_{(1)}h^n}[h^1_{(2)}]\ldots [h^{n-1}_{(2)}] [S(h^{n-1}_{(3)})]\ldots [S(h^1_{(3)})] \\
			& = & \varepsilon_{h^1_{(1)}\ldots h^{n-1}_{(1)}h^n}[h^1_{(2)}]\ldots [h^{n-2}_{(2)}]\varepsilon_{h^{n-1}_{(2)}} [S(h^{n-2}_{(3)})]\ldots [S(h^1_{(3)})] \\
			& = & \varepsilon_{h^1_{(1)}\ldots h^{n-1}_{(1)}h^n} \varepsilon_{h^1_{(2)}\ldots h^{n-2}_{(2)}h^{n-1}_{(2)}}[h^1_{(3)}]\ldots [h^{n-2}_{(3)}] [S(h^{n-2}_{(4)})]\ldots [S(h^1_{(4)})] \\
			& \vdots & \\
			& = & \varepsilon_{h^1_{(1)}\ldots h^{n-1}_{(1)}h^n} \varepsilon_{h^1_{(2)}\ldots h^{n-2}_{(2)}h^{n-1}_{(2)}} \ldots \varepsilon_{h^1_{(n)}},
		\end{eqnarray*}
		while, on the other hand,
		\begin{eqnarray*}
			\mathcal{S} \ast I ([h^1]\ldots [h^n]) & = & [S(h^n_{(1)})] \ldots [S(h^1_{(1)})] [h^1_{(2)}]\ldots [h^n_{(2)}] \\
			& = & [S(h^n_{(2)})] \ldots [S(h^1_{(2)})] [S(S(h^1_{(1)}))]\ldots [S(S(h^n_{(1)}))] \\
			& = & [S(h^n)_{(1)}] \ldots [S(h^1)_{(1)}] [S(S(h^1)_{(2)})]\ldots [S(S(h^n)_{(2)})] \\
			& = & \varepsilon_{S(h^n)_{(1)} \ldots S(h^2)_{(1)}S(h^1)} \varepsilon_{S(h^n)_{(2)} \ldots S(h^2)_{(2)}} \ldots \varepsilon_{S(h^n)_{(n)}} \\
			& = & \varepsilon_{S(h^n_{(n)}) \ldots S(h^2_{(n)})S(h^1)} \varepsilon_{S(h^n_{(n-1)}) \ldots S(h^2_{(n-1)})} \ldots \varepsilon_{S(h^n_{(1)})} .
		\end{eqnarray*}	
		As both expressions can be written in terms of combinations of products of elements $\varepsilon_x$, for $x\in H$, then they commute among themselves. Therefore, for a cocommutative Hopf algebra $H$, the universal Hopf algebra $H_{par}$ is a QISG. 
	\end{proof}

	\subsection{QISG and Hopf categories}
	
	Hopf   categories were introduced in \cite{BCV} in the context of categories enriched over the monoidal category of coalgebras of a strict braided monoidal category $\cV$. 
	In this section we will consider the  case $\cV = {}_\K \cM$, the symmetric monoidal category of left $\K$-modules over a commutative ring $\K$, and we will introduce Hopf categories as categories enriched over the monoidal category  $\coalg$ of $\K$-coalgebras, or simply ``$\coalg$-categories'' for short.
	
	Unraveling the definition, a (small) $\coalg$-category $H$ over the set $X$ 
	consists of a family $\{ H_{x,y} \}_{x,y \in X}$ of $\K$-coalgebras, with structure morphisms 
	\begin{equation}
		\Delta_{x,y} : H_{x,y} \to H_{x,y} \otimes H_{x,y}, \ \ \ \ \varepsilon_{x,y} : H_{x,y} \to \K, \label{eq.Kcoalgebra}
	\end{equation}
	for every $x, y \in X$,
	plus $\K$-linear mappings  $\mu_{x,y,z} : H_{x,y} \otimes H_{y,z} \to H_{x,z}$ and $\eta_{x}: \K \to H_{x,x}$, for every $x,y,z \in X$, such that 
	\begin{eqnarray}
		\mu_{x,y,t}\circ (H_{x,y}\ot \mu_{y,z,t})& =& \mu_{x,z,t}\circ (\mu_{x,y,z}\ot H_{z,t}) ;\label{eq.associativity}\\
		\mu_{x,x,y}\circ (\eta_x\ot H_{x,y})& =& H_{x,y}=\mu_{x,y,y}\circ ( H_{x,y}\ot \eta_y) \label{eq.identity}.
	\end{eqnarray}
	The coalgebra structure is required to be compatible with the multiplications $\mu_{x,y}$ and unit mappings $\eta_x$ in the following sense: first, $\Delta$ satisfies the equalities \begin{eqnarray}
		\Delta_{x,z} \circ \mu_{x,y,z} & = & (\mu_{x,y,z} \otimes \mu_{x,y,z})  \circ (H_{x,y} \otimes \tau_{H_{x,y},H_{y,z}} \otimes H_{y,z} ) \circ (\Delta_{x,y} \otimes \Delta_{y,z}), \label{eqn.delta.multiplicative}\\
		\Delta_{x,x} \circ \eta_x & = & \eta_x \otimes \eta_x,\label{eqn.delta.unital}
	\end{eqnarray}
	where $\tau_{H_{x,y},H_{y,z}}$ is the twist map 
	\[
	\tau_{H_{x,y},H_{y,z}} : H_{x,y}\otimes H_{y,z} \to H_{y,z} \otimes H_{x,y}, \ \ \ \ h \otimes k \mapsto k \otimes h;
	\]
	the equalities respective to the counit mappings are  
	\begin{eqnarray}
		\varepsilon_{x,y} \otimes \varepsilon_{y,z} & = & \varepsilon_{x,z} \circ \mu_{x,y,z},\label{eqn.epsilon.multiplicative} \\
		\varepsilon_{x,x} \circ \eta_x & = & \K \label{eqn.epsilon.unital}.
	\end{eqnarray}
	
	So let $H$  be a $\coalg$-category and consider the $\K$-module 
	$$\alg(H) = \oplus_{x,y \in H_0} H_{x,y} ,$$
	in which $H_0$ denotes the set of objects in the caegory $H$. Since $\alg(H)$ is a direct sum of coalgebras it has a canonical coalgebra structure as follows: denoting by 
	``$a_{x,y}$'' an element of the component $H_{x,y}$, the map 
	$\Delta: \alg(H) \to \alg(H) \otimes \alg(H) $ is defined component-wise by
	$$\Delta(a_{x,y}) = \Delta_{x,y} (a_{x,y}),$$ and  $\varepsilon: \alg(H) \otimes \K$ is defined by $$\varepsilon (a_{x,y}) = \varepsilon_{x,y} (a_{x,y})$$
	
	We can also define a product $\mu : \alg(H) \otimes \alg(H) \to \alg(H)$ by 
	\begin{center}
		$\mu (a_{x,y} \otimes b_{y,z})$ =   $\mu_{x,y,z} (a_{x,y} \otimes b_{y,z})$ 
		\ \ and \ \ 
		$\mu(a_{x,y} \otimes b_{w,z})  = 0$  whenever $y \neq w$.	
	\end{center}
	It can be verified that the triple $\alg(H)= (\alg(H), \mu, \Delta)$ satisfies conditions (QISG1) and (QISG2). 
	In fact, it follows from equalities \eqref{eq.associativity}-\eqref{eqn.epsilon.unital} that $\Delta$ and $\varepsilon$ are multiplicative, 
	and also that $\alg(H)$ is an algebra, which is unital if and only if $X$ is finite. 
	Moreover, in any case $\alg(H)$ has, at least, a system of local units: the idempotents $\eta_x(1)$ are mutually orthogonal and 
	the set of the finite sums $\eta_{x_1}(1) + \cdots + \eta_{x_n}(1)$, where $n \geq 1$ and the elements $x_1,x_2, \ldots, x_n$ are distinct, is a system of local units for $\alg(H)$.
	
	\bigskip 
	
	A \textbf{Hopf $\K$-category} is a $\coalg$-category $H$ with an antipode which, in this context, is a family of $\K$-linear maps $S_{x,y} : H_{x,y} \to H_{y,x}$ such that
	\begin{eqnarray}
		\mu_{x,y,x}\circ (H_{x,y}\ot S_{x,y})\circ \Delta_{x,y}&=& \eta_x\circ \varepsilon_{x,y}:\ H_{x,y}\to H_{x,x};\\
		\mu_{y,x,y}\circ ( S_{x,y}\ot H_{x,y})\circ \Delta_{x,y}&=& \eta_y\circ \varepsilon_{x,y}:\ H_{x,y}\to H_{y,y},
	\end{eqnarray}
	for all $x,y\in X$. This family induces a $\K$-linear map $S : \alg(H) \to \alg(H)$ which satisfies, in 
	Sweedler notation, the equalities
	\begin{eqnarray} 
		(h_{x,y})_{(1)} S( (h_{x,y})_{(2)})&=&  \varepsilon_{x,y}(h_{x,y}) \eta_y(1), \label{antipode.1} \\
		S( (h_{x,y})_{(1)} )(h_{x,y})_{(2)}&=&  \varepsilon_{x,y}(h_{x,y}) \eta_x(1),
		\label{antipode.2}
	\end{eqnarray}
	for every $h_{x,y} \in H_{x,y}$ and for all $x,y \in X$.
	
	We claim that $\alg(H)$ is a quantum inverse semigroup with $S$ as its pseudo-antipode. 
	Axiom (QISG3)(ii) follows easily from 
	\eqref{antipode.1} and \eqref{antipode.2}. 
	(QISG3)(i) follows from \cite[Lemma 3.6]{BCV}, where it is proved that
	\begin{eqnarray}
		S_{x,z}\circ \mu_{x,y,z}&=&\mu_{z,y,x}\circ (S_{y,z}\ot S_{x,y})\circ \tau_{H_{x,y},H_{y,z}}\\
		\Delta_{y,x}\circ S_{x,y}&=&\tau_{H_{y,x},H_{y,x}}\circ (S_{x,y}\ot S_{x,y})\circ \Delta_{x,y}.
	\end{eqnarray}
	Hence for $h_{x,y} \in H_{x,y},k_{y,z} \in H_{y,z}$ we have 
	\[
	S(h_{x,y}k_{y,z}) = S(k_{y,z}) S(h_{x,y}), \ \ \ \ S((h_{x,y})_{(1)}\otimes S(h_{x,y})_{(2)} =S((h_{x,y})_{(2)})\otimes S((h_{x,y})_{(1)}) 
	\]
	and it follows that $S$ is antimultiplicative and anticomultiplicative.
	
	Finally, axiom (QISG4) also follows from \eqref{antipode.1} and \eqref{antipode.2}:
	\begin{eqnarray*}
		(h_{x,y})_{(1)} \mathcal{S}((h_{x,y})_{(2)}) \mathcal{S}((k_{z,w})_{(1)})(k_{z,w})_{(2)} & =& \varepsilon_{x,y} (h_{x,y}) \varepsilon_{z,w} (k_{z,w})  \eta_x(1) \eta_w(1)  \\
		& =&  \varepsilon_{z,w} (k_{z,w}) \varepsilon_{x,y} (h_{x,y}) \eta_w(1)  \eta_x(1)   \\
		& = &  \mathcal{S}((k_{z,w})_{(1)})(k_{z,w})_{(2)} (h_{x,y})_{(1)} \mathcal{S}((h_{x,y})_{(2)}).
	\end{eqnarray*}     
	Therefore, if $H$ is a Hopf $\K$-category then $\alg(H)$ is a counital quantum inverse semigroup with anticomultiplicative pseudo-antipode.

	We mention that if a Hopf category $H$ has a finite set of objects then we can obtain this same QISG structure on $\alg(H)$ by other means: in (\cite{BCV}, Prop 7.1) it is proved that, in this case, $\alg(H)$ is a weak Hopf algebra, and we know by Example \ref{weak} that every weak Hopf algebra has a structure of quantum inverse semigroup; this structure of QISG coincides with the one introduced above.
	
	\begin{exmp}
		Consider a Hopf category $H$ whose set of objects is $H_0=\mathbb{N}$ and for $i,j\in \mathbb{N}$, $H_{i,j}=\Bbbk$ with the trivial coalgebra structure ($\Delta (\lambda)=\lambda \otimes 1=1\otimes \lambda$, for $\lambda \neq 0$, $\Delta (0)=0\otimes 0$,  $\epsilon =Id_{\Bbbk}$). The multiplication maps $\mu_{i,j,k} :H_{i,k} \otimes H_{k,j} \rightarrow H_{i,j}$ is given by the multiplication on the field $\Bbbk$ and the unit map $\eta_i :\Bbbk \rightarrow H_{i,i}$ is the identity map on the field $\Bbbk$. Finally, the antipode map $S_{i,j} :H_{i,j} \rightarrow H_{j,i}$ is simply the identity linear map on $\Bbbk$.
		
		The algebra $\alg (H)$ is the algebra $M_{\mathbb{N}}(\Bbbk )_{00}$, which is the algebra of row and column finite $\mathbb{N} \times \mathbb{N}$ matrices. The QISG structure on this specific Hopf category can be written as
		\[
		\Delta \left( (a_{ij})_{i,j} \right) =({a_{ij}}_{(1)} )_{i,j} \otimes ({a_{ij}}_{(2)} )_{i,j} ,
		\]
		in which ${a_{ij}}_{(1)}  \otimes {a_{ij}}_{(1)} =a_{ij} \otimes 1$ for $a_{ij} \neq 0$ and ${a_{ij}}_{(1)}  \otimes {a_{ij}}_{(1)} =0$, for $a_{ij} =0$,
		\[
		\varepsilon \left( (a_{ij})_{i,j} \right)=(a_{ij})_{i,j} ,
		\]
		and 
		\[
		S \left( (a_{ij})_{i,j} \right)=(a_{ji})_{i,j} .
		\]
		It is easy to see that this satisfy (QISG1), (QISG2), (QISG3) and (QISG4) of Definition \ref{quantuminversesemigroup}.
		
		If the set of objects $H_0$ is a finite set, $H_0 =\{ 1, \ldots , n \}$ then the algebra $\alg (H)$ coincides with the standard weak Hopf algebra $M_n (k)$ of $n\times n$ matrices.
	\end{exmp}

	\section{Generalized bisections on Hopf algebroids}
	
	The interplay between groupoids and inverse semigroups has been vastly explored in the literature \cite{Ehresmann,law,Nambooripad,Paterson,Schein}. One of the most important sources of inverse semigroups associated to groupoids are the bisections of \'etale topological groupoids \cite{buss,ruy3,MR}. A topological groupoid is a groupoid which is a topological space and whose structural maps (source, target, multiplication, unit map and inversion) are topological maps. A topological groupoid is \'etale if the source and target maps are local homeomorphisms. Given an \'etale groupoid $\mathcal{G}$, a local bisection of $\mathcal{G}$ is an open subset $U\subseteq \mathcal{G}$ such that the restriction of the source map to it is injective (this implies automatically that the target map restricted to $U$ is also injective). Denoting by $\mathfrak{B}(\mathcal{G})$ the set of all bisections of $\mathcal{G}$, one can prove that this set has the following structure of an inverse semigroup: 
	\begin{enumerate}
		\item Given $U,V \in \mathfrak{B}(\mathcal{G})$, define their multiplication as
		\[
		UV =\{ \gamma \delta \in \mathcal{G} \; |\; (\gamma , \delta ) \in U\times V \; \mbox{ and } \; s(\gamma)=t(\delta) \} .
		\]
		\item For $U\in \mathfrak{B}(\mathcal{G})$, define its pseudo-inverse as
		\[
		U^\ast =\{ \gamma^{-1}\in \mathcal{G} \; |\; \gamma \in U \} .
		\]
	\end{enumerate}
	
	\'Etale groupoids have nice properties relative to their bisections. For example, the set of bisections form a basis for the topology of the entire groupoid, being the idempotents of the inverse semigroup of bisections correspondent to the open subsets of the unit space $\mathcal{G}^{(0)}$ of the groupoid, which is, in its turn, a clopen subset of the entire groupoid.
	
	For what comes next, one needs a more algebraic characterization of bisections of groupoids. 
	
	\begin{defi}\label{bisection}
		A local bisection of a groupoid $\mathcal{G}$ is a pair $(u,X)$ in which $X$ is a subset of $\mathcal{G}^{(0)}$ and $u: X\rightarrow \mathcal{G}$ is a function such that
		\begin{enumerate}
			\item[(i)] $s\circ u=\mbox{Id}_X$.
			\item[(ii)]  $t\circ u: X\rightarrow t(u(X))$ is a bijection.
		\end{enumerate}
		The set $X$ is called the domain of the bisection $(u,X)$. A global bisection is a local bisection whose domain is $X=\mathcal{G}^{(0)}$.
	\end{defi}
	
	Note that, item (ii) implies that the function $u: X\rightarrow \mathcal{G}$ is injective. Denote again by $\mathfrak{B}(\mathcal{G})$ the set of the local bisections of the groupoid $\mathcal{G}$ and by $\mbox{Gl}\mathfrak{B}(\mathcal{G})$ the set of its global bisections.
	
	\begin{rmk}
		\begin{enumerate}
			\item[(i)] The two notions of a bisection, as a subset of the groupoid restricted to what the source map is injective and as a pair of a subset of the unit set and a function are in fact related. On one hand, given a subset $U\subseteq \mathcal{G}$ for which $s|_{U}:U\rightarrow \mathcal{G}^{(0)}$ is injective, define $X=s(U) \subseteq \mathcal{G}^{(0)}$ and $u:X\rightarrow \mathcal{G}$ as the inverse of $s|_{U}$. On the other hand, given a pair $(u,X)$, as in Definition \ref{bisection}, define $U=u(X)$, as $u$ is already injective, the corestriction $u:X\rightarrow U$ is bijective. As the left inverse of $u$ is $s$, by definition, then it is the inverse of that corestriction, making $s|U$ injective.
			\item[(ii)] A topological version of the Definition \ref{bisection} can be seen in \cite{Resende}. There, the subset $X\subseteq \mathcal{G}^{(0)}$ is an open subset and the map $u:X\rightarrow \mathcal{G}$ is a continuous function. When we consider only sets with the discrete topology, the two definitons coincide.
		\end{enumerate}
	\end{rmk}
	
	Several instances of the following result have already appeared in the literature (see, \cite{garner} Example 17, for a version closer to our approach), but we present a full proof here in order to introduce some notations and techniques which will be useful throughout this work.  
	
	\begin{prop}
		Let $\mathcal{G}$ be a groupoid, then the set $\mathfrak{B}(\mathcal{G})$, of its local bisections, defines an inverse semigroup and the set $\mbox{Gl}\mathfrak{B}(\mathcal{G})$, of its global bisections, is a group.
	\end{prop}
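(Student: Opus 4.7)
The plan is to make the inverse semigroup structure on $\mathfrak{B}(\mathcal{G})$ fully explicit and then verify the axioms by direct computation, using associativity and cancellation in $\mathcal{G}$ as the engine. Given two local bisections $(u,X)$ and $(v,Y)$, I would define their product to be $(w,Z)$ where
\[
Z=\{y\in Y\,:\, t(v(y))\in X\}, \qquad w(y)=u(t(v(y)))\cdot v(y),
\]
the groupoid composition on the right being legitimate because $s(u(t(v(y))))=t(v(y))$. A quick check gives $s\circ w=\mathrm{Id}_Z$, while $t\circ w = (t\circ u)\circ(t\circ v)|_{Z}$ is injective as a composition of injections and hence bijects onto its image; both axioms of Definition \ref{bisection} are verified. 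For the pseudo-inverse of $(u,X)$ I would set $X^{*}=t(u(X))$ and $u^{*}(y)=\bigl(u((t\circ u)^{-1}(y))\bigr)^{-1}$, and the checks that this is again a local bisection follow immediately from the groupoid inversion laws.

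Associativity is then inherited from $\mathcal{G}$: unwinding both $((u,X)(v,Y))(w,W)$ and $(u,X)((v,Y)(w,W))$ at a common point produces, after domain bookkeeping, the same triple composition of arrows in the groupoid, which is associative. For the pseudo-inverse identities, a direct calculation in $(u,X)(u^{*},X^{*})(u,X)$ collapses every occurrence of $u(x)^{-1}u(x)$ to the identity arrow at $x$, and one recovers $u(x)$ on the full original domain $X$; the dual identity $u^{*}uu^{*}=u^{*}$ is analogous.

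The key structural step is identifying the idempotents. I claim that $(u,X)$ is idempotent precisely when $u(x)$ equals the identity arrow at $x$ for every $x\in X$. Indeed, if $(u,X)(u,X)=(u,X)$ then the targets of the two sides must agree, and injectivity of $t\circ u$ forces $t(u(x))=x$, so each $u(x)$ is a loop; cancellation in $\mathcal{G}$ then gives $u(x)=1_x$. The converse is immediate. With this description, the product of two idempotents with domains $X$ and $Y$ computed from either order is the idempotent with domain $X\cap Y$, so idempotents commute. Together with $uu^{*}u=u$ and $u^{*}uu^{*}=u^{*}$ this forces uniqueness of the pseudo-inverse by the argument recalled in the remark following Definition \ref{quantuminversesemigroup}, and $\mathfrak{B}(\mathcal{G})$ is an inverse semigroup.

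Finally, for the group of global bisections I would check that the bisection sending each $x\in\mathcal{G}^{(0)}$ to the identity arrow at $x$ is a two-sided identity, and that for $(u,\mathcal{G}^{(0)})\in\mathrm{Gl}\mathfrak{B}(\mathcal{G})$, the fact that $t\circ u$ bijects $\mathcal{G}^{(0)}$ with $\mathcal{G}^{(0)}$ makes $(u^{*},\mathcal{G}^{(0)})$ global as well, so the inverse semigroup identities specialize to genuine two-sided group inverses. The main obstacle throughout is the careful tracking of domains in the product formula; once the idempotents are pinned down, everything else follows from the groupoid axioms.
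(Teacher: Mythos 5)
Your proof is correct and follows essentially the same route as the paper: the same product and pseudo-inverse formulas (your domain $\{y\in Y : t(v(y))\in X\}$ is exactly the paper's $(t\circ v)^{-1}(t\circ v(Y)\cap X)$), associativity inherited from $\mathcal{G}$, identification of the idempotents as the identity-valued bisections $(i,X)$ so that they multiply as $(i,X\cap Y)$ and commute, and the specialization to global bisections. The only point worth noting is that your last step, like the paper's, implicitly assumes that $t\circ u$ is onto $\mathcal{G}^{(0)}$ for a global bisection (Definition \ref{bisection} only requires a bijection onto the image $t(u(\mathcal{G}^{(0)}))$), so that the pseudo-inverse of a global bisection is again global; this is a convention shared with the paper rather than a defect specific to your argument.
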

	
	\begin{proof}
		Consider $(u,X)$ and $(v,Y)$ two local bisections of $\mathcal{G}$, define their product as $(u,X)\cdot (v,Y)=(uv, Z)$, in which
		\[
		Z=(t\circ v)^{-1} (t\circ v (Y) \cap X ) \quad \mbox{ and } \quad	(uv) (y) = u(t\circ v(y)) v(y) .
		\]
		This product is associative. Indeed, for $(u,X),(v,Y),(w,Z) \in \mathfrak{B}(\mathcal{G})$, we have
		\begin{eqnarray*}
			((u,X)\cdot (v,Y))\cdot (w,Z) & = & (uv, (t\circ v)^{-1}(t\circ v (Y) \cap X) ) \cdot (w,Z) \\
			& = & ((uv)w,(t\circ w)^{-1}(t\circ w(Z)\cap (t\circ v)^{-1} (t\circ v (Y)\cap X ))) ,
		\end{eqnarray*}	
		while
		\begin{eqnarray*}
			(u,X)\cdot ((v,Y)\cdot (w,Z)) & = & (u,X)\cdot (vw, (t\circ w)^{-1}(t\circ w (Z) \cap Y) ) \\
			& = & (u(vw),(t\circ vw)^{-1}(t\circ vw((t\circ w)^{-1}(t\circ w (Z) \cap Y) )\cap X )) ,
		\end{eqnarray*}
		In order to show that these bisections are equal, first note that, for any $z\in Z$
		\[
		t\circ vw (z) =t(v(t\circ w (z))w(z)) =t(v(t\circ w (z)))=t\circ v\circ t\circ w (z) .
		\]
		Now, take $z\in (t\circ vw)^{-1}(t\circ vw((t\circ w)^{-1}(t\circ w (Z) \cap Y) )\cap X )$, then $z=(t\circ vw)^{-1} (x)$ for an element $x\in  t\circ vw((t\circ w)^{-1}(t\circ w (Z) \cap Y) )\cap X$, or $x=t\circ vw (z)=t\circ v \circ t \circ w (z)$. Denote by $y=t\circ w (z)$ this element belongs to $t\circ w (Z) \cap Y$ then $x=t\circ v (y)\in t\circ v (Y) \cap X$. Finally $y=(t\circ v)^{-1}(x) =t\circ w (z) \in t\circ w(Z)\cap (t\circ v)^{-1} (t\circ v (Y)\cap X )$, Therefore, $z\in (t\circ w)^{-1}(t\circ w(Z)\cap (t\circ v)^{-1} (t\circ v (Y)\cap X ))$. Reciprocally, one can see that $z\in (t\circ w)^{-1}(t\circ w(Z)\cap (t\circ v)^{-1} (t\circ v (Y)\cap X ))$ implies in $z\in (t\circ vw)^{-1}(t\circ vw((t\circ w)^{-1}(t\circ w (Z) \cap Y) )\cap X )$, proving the equality between the domains of these two bisections, henceforth denoted by $T$, only for sake of simplicity.
		
		Now, for $z\in T$,
		\[
		u(vw)(z)  =  u(t\circ vw (z))vw(z) =u(t\circ v \circ t \circ w (z))v(t\circ w (z))w(z) 
		\]
		and
		\[
		(uv)w (z) = uv(t\circ w (z))w(z) =u(t\circ v\circ t \circ w (z))v(t\circ w (z))w(z) .
		\] 
		Therefore $((u,X)\cdot (v,Y))\cdot (w,Z) =(u,X)\cdot ((v,Y)\cdot (w,Z))$.
		
		For any bisection $(u,X) \in \mathfrak{B}(\mathcal{G})$ define $(u,X)^* =(\overline{u},t\circ u(X))$, in which, for any $x\in X$, $\overline{u} (t\circ u (x))=u(x)^{-1}$. Then for any $x\in X$,
		\[
		\overline{u}u (x)=\overline{u}(t\circ u (x))u(x) =u(x)^{-1}u(x) = i(s(u(x))) = i(x) ,
		\]
		in which $ i:\mathcal{G}^{(0)} \rightarrow \mathcal{G}$ is the unit map of the groupoid. We conclude that
		\[
		u\overline{u} u(x) =u(t\circ \overline{u}u (x))\overline{u}u(x) =u(t\circ  i(x)) i(x) =u(x) 
		\]
		and
		\begin{eqnarray*}
			\overline{u} u\overline{u} (t\circ u(x)) & = & \overline{u}u (t\circ \overline{u} (t\circ u (x))) \overline{u}(t\circ u (x)) \\ 
			& = & \overline{u}u (t(u(x)^{-1}))  \overline{u}(t\circ u (x)) = \overline{u}u (x)  \overline{u}(t\circ u (x)) \\ 
			& = &  i(x) \overline{u}(t\circ u (x)) =\overline{u}(t\circ u (x)) .
		\end{eqnarray*}
		Therefore
		\[
		(u,X)\cdot (u,X)^* \cdot (u,X) =(u,X)\cdot (\overline{u},t\circ u (X))\cdot (u,X) =(u,X)\cdot (\overline{u}u,X)=(u,X)\cdot ( i,X) =(u,X)
		\]
		and
		\[
		(u,X)^* \cdot (u,X) \cdot (u,X)^* =( i,X) \cdot (\overline{u} ,t\circ u (X)) =(\overline{u} ,t\circ u (X))=(u,X)^* .
		\]
		It remains to prove that the idempotents in $\mathfrak{B}(\mathcal{G})$ commute among themselves. If $(u,X)$ is an idempotent element, then
		\[
		(u,X)=(u,X)\cdot (u,X) =(uu,(t\circ u)^{-1}(t\circ u(X) \cap X)) ,
		\]
		implying that $t\circ u (X)=X$ and $u(t\circ u(x))u(x)=u(x)$. Multiplying the last equality on the right by $u(x)^{-1}$ we end up with $u(t\circ u(x))= i(t\circ u (x))$. As $t\circ u (X)=X$, for any $x \in X$ there exists $y\in X$ such that $x=t\circ u(y)$, then $u= i$ and $(u,X)=( i,X)$. Multiplying two of such idempotents we have
		\[
		( i,X)\cdot ( i,Y) =( i,X\cap Y) =( i,Y) \cdot ( i,X) .
		\]
		Therefore, $\mathfrak{B}(\mathcal{G})$ is an inverse semigroup.
		
		The global bisections are of the form $(u,\mathcal{G}^{(0)})$ and clearly, global bisections $\mbox{Gl}\mathfrak{B}(\mathcal{G})$ form a subsemigroup of $\mathfrak{B}(\mathcal{G})$.But the only idempotent possible in $\mbox{Gl}\mathfrak{B}(\mathcal{G})$ is the unit $(i,\mathcal{G}^{(0)})$. An inverse semigroup with only one idempotent is a group, therefore $\mbox{Gl}\mathfrak{B}(\mathcal{G})$ is a group.
	\end{proof}	
	
	\subsection{Biretractions on commutative Hopf algebroids}
	
	In what follows, unless stated otherwise, we shall consider commutative Hopf algebroids over a commutative base algebra.
	
	\begin{defi}
		Let $\mathcal{H}$ be a commutative Hopf algebroid over the base algebra $A$. A \emph{local biretraction} in $\mathcal{H}$ is a linear and multiplicative map $\alpha :\mathcal{H}\rightarrow A$ such that
		\begin{itemize}
			\item[(BRT1)] $\alpha \circ s (a) =a \,\alpha (1_{\mathcal{H}})$ for every $a\in A.$
			\item[(BRT2)] There exists $e^\alpha\in A$ such that $\alpha\circ t(e^\alpha)=\alpha(1_\mathcal{H})$ and
			$$\alpha \circ t|_{A\, e^\alpha} : A\, e^\alpha \longrightarrow A\, \alpha(1_\mathcal{H})$$ is a bijection.
		\end{itemize} 
		A local biretraction $\alpha$ is global if $\alpha (1_{\mathcal{H}})=1_A$. Denote the set of local biretractions of $\mathcal{H}$ by $\mathfrak{Brt} (\mathcal{H}, A)$ and the set of global biretractions of $\mathcal{H}$ by $\mbox{Gl}\mathfrak{Brt} (\mathcal{H}, A)$.
	\end{defi}

	\begin{prop}
		Let $(\mathcal{H}, s,t,\Delta,\,\varepsilon)$ be a commutative Hopf algebroid over $A$ and let 
		$\alpha: \mathcal{H}\rightarrow A$ be a local biretraction. 
		\begin{itemize}
			\item[(1)] $\alpha(1_\mathcal{H})$ is an idempotent in $A$ and $\alpha(\mathcal{H})$ coincides with the ideal $A\,\alpha(1_\mathcal{H}).$
			\item[(2)] The element $e^\alpha$ is idempotent.
			\item[(3)] The element $e^\alpha$ is unique. 
		\end{itemize}
	\end{prop}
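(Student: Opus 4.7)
The three items are essentially formal consequences of multiplicativity of $\alpha$, axiom (BRT1), the bijectivity condition in (BRT2), and the commutativity of $A$. I would handle them in the order (1), (2), (3), since each uses the previous one.

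\textbf{Step 1 (Item (1)).} Multiplicativity of $\alpha$ gives $\alpha(1_\mathcal{H})=\alpha(1_\mathcal{H}\cdot 1_\mathcal{H})=\alpha(1_\mathcal{H})^2$, so $\alpha(1_\mathcal{H})$ is idempotent in $A$. For the image equality, the same multiplicativity yields $\alpha(h)=\alpha(h)\alpha(1_\mathcal{H})\in A\,\alpha(1_\mathcal{H})$ for every $h\in\mathcal{H}$, proving $\alpha(\mathcal{H})\subseteq A\,\alpha(1_\mathcal{H})$. Conversely, (BRT1) reads $\alpha(s(a))=a\,\alpha(1_\mathcal{H})$, which shows that every generator of $A\,\alpha(1_\mathcal{H})$ lies in $\alpha(\mathcal{H})$, giving the reverse inclusion.

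\textbf{Step 2 (Item (2)).} In the commutative Hopf algebroid setting, $t$ is an algebra morphism, so $\alpha\circ t:A\to A$ is multiplicative. Using (BRT2) and the idempotence of $\alpha(1_\mathcal{H})$ from Step 1,
\[
\alpha\circ t\bigl((e^\alpha)^2\bigr)=\bigl(\alpha\circ t(e^\alpha)\bigr)^2=\alpha(1_\mathcal{H})^2=\alpha(1_\mathcal{H})=\alpha\circ t(e^\alpha).
\]
Since $e^\alpha$ and $(e^\alpha)^2$ both lie in $A\,e^\alpha$, and $\alpha\circ t|_{A e^\alpha}$ is injective by (BRT2), one concludes $(e^\alpha)^2=e^\alpha$.

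\textbf{Step 3 (Item (3)).} Suppose $e'\in A$ is another element satisfying (BRT2). The idea is to show $e^\alpha\, e'$ coincides with both $e^\alpha$ and $e'$, and then exploit commutativity of $A$. Using multiplicativity of $\alpha\circ t$ and Step 1,
\[
\alpha\circ t(e^\alpha e')=\alpha\circ t(e^\alpha)\,\alpha\circ t(e')=\alpha(1_\mathcal{H})^2=\alpha(1_\mathcal{H}).
\]
Now $e^\alpha e'\in A\,e^\alpha$ and $\alpha\circ t(e^\alpha)=\alpha(1_\mathcal{H})$, so injectivity of $\alpha\circ t|_{A e^\alpha}$ forces $e^\alpha e'=e^\alpha$. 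Symmetrically, $e^\alpha e'\in A\,e'$ and injectivity of $\alpha\circ t|_{A e'}$ forces $e^\alpha e'=e'$. Hence $e^\alpha=e'$.

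The only delicate step is (3): one must recognize that the element $e^\alpha e'$ belongs to \emph{both} ideals $A\,e^\alpha$ and $A\,e'$ (which uses the commutativity of $A$), and then apply the two injectivity hypotheses in parallel. Everything else is a direct bookkeeping exercise with multiplicativity and the axioms (BRT1), (BRT2).
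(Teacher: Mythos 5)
Your proposal is correct and follows essentially the same route as the paper's own proof: multiplicativity for item (1), applying injectivity of $\alpha\circ t|_{Ae^\alpha}$ to $(e^\alpha)^2$ for item (2), and comparing $e^\alpha f^\alpha$ inside both ideals for item (3). No gaps.
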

	\begin{proof}
		\begin{itemize}
			\item[(1)]  $\alpha(1_\mathcal{H})$ is an idempotent because $\alpha$ is multiplicative. Moreover, for every $h\in \mathcal{H}$ and $a\in A,$
			$$\alpha(h)=\alpha(h)\,\alpha(1_\mathcal{H}) \in A\,\alpha(1_\mathcal{H}) \,\,\,\,\,\textrm{ and }\,\,\,\,\,a\,\alpha(1_\mathcal{H})=\alpha\circ s(a)\in \alpha(\mathcal{H}).$$
			So, we have  that the image $\alpha(\mathcal{H})$ coincides with the ideal $A\,\alpha(1_\mathcal{H}) \trianglelefteq A.$ Also, note that $\alpha(1_\mathcal{H})$ is the unity of the ideal $A\,\alpha(1_\mathcal{H}).$ 
			
			\item[(2)] From \emph{(BRT2)} it follows that 
			\begin{align*}
				\alpha\circ t(e^\alpha) =\alpha(1_\mathcal{H})  = \alpha(1_\mathcal{H}) \,\alpha( 1_\mathcal{H})& = \alpha\circ t(e^\alpha)\, \alpha\circ t (e^\alpha)= \alpha\circ t (e^\alpha\, e^\alpha).
			\end{align*}
			
			Since $\alpha \circ t|_{A\, e^\alpha}$ is bijective and $e^\alpha \, e^\alpha \in A\, e^\alpha,$ we have that
			$$e^\alpha\, e^\alpha=e^\alpha.$$
			
			\item[(3)] Suppose that there exist $e^\alpha$ and $f^\alpha$ in $A$ such that $\alpha\circ t(e^\alpha) =\alpha(1_\mathcal{H})=\alpha\circ t(f^\alpha)$ and the maps
			$$\alpha \circ t|_{A\, e^\alpha} : A\, e^\alpha \longrightarrow A\, \alpha(1_\mathcal{H})$$
			and
			$$\alpha \circ t|_{A f^\alpha} : A f^\alpha \longrightarrow A\, \alpha(1_\mathcal{H})$$
			are both bijections. Then,
			$$\alpha\circ t \left(e^\alpha  f^\alpha\right) = \alpha\circ t\left( e^\alpha\right) \alpha\circ t\left( f^\alpha\right) = \alpha(1_\mathcal{H})=\alpha\circ t\left( e^\alpha\right) .$$
			Also, we have
			\[
			\alpha\circ t \left(e^\alpha  f^\alpha\right) =\alpha\circ t\left( f^\alpha\right)
			\]
			Since the element $e^\alpha  f^\alpha$ lies in both ideals $A\, e^\alpha$ and  $A f^\alpha,$ and $\alpha \circ t$ is injective, we obtain
			$$e^\alpha= e^\alpha f^\alpha = f^\alpha.$$
			Therefore, the element $e^\alpha$ from \emph{(BRT2)} is unique.       
		\end{itemize}
	\end{proof}
	
	\begin{rmk}
		\begin{itemize}
			\item[(i)] Observe that a local biretraction $\alpha$, being multiplicative and satisfying (BRT1) is automatically a morphism of right $A$-modules between $\mathcal{H}$ and $A$. Indeed, for $h\in \mathcal{H}$ and $a\in A$,
			\begin{eqnarray*}
				\alpha (h\triangleleft a) & = & \alpha (hs(a)) =\alpha (h) \alpha (s(a))\\
				& = & \alpha (h) a \alpha ( 1_{\mathcal{H}})=\alpha (h) a.
			\end{eqnarray*}
			Therefore, $\mathfrak{Brt}(\mathcal{H},A) \subseteq \text{Hom}_A (\mathcal{H}, A)$.
			\item[(ii)] For a local biretraction $\alpha: \mathcal{H}\rightarrow A,$ the map $\alpha \circ t|_{A\, e^\alpha} : A\, e^\alpha \longrightarrow A\, \alpha(1_\mathcal{H})$ is an element of the inverse semigroup $\mathcal{I}(A)$ of the partial bijections between unital ideals of $A.$   
			\item[(iii)]
			For a commutative Hopf algebroid over a domain $A$ we only have global biretractions, since the only idempotent element in $A$ is $1_A.$
		\end{itemize}
	\end{rmk}
	
	As we have seen before, the set of local bisections of a groupoid $\mathcal{G}$ is an inverse semigroup. Let us explore deeply the algebraic structure of the set of biretractions of a commutative Hopf algebroid.
	
	\begin{thm} \label{theorem.Brt(H,A).is.a.regular.monoid}
		Let $(\mathcal{H} , s, t, \Delta , \varepsilon , S)$ be a commutative Hopf algebroid  over a commutative algebra $A$. Then the set $\mathfrak{Brt} (\mathcal{H}, A)$ of local biretractions of $\mathcal{H}$ is a regular monoid. 
	\end{thm}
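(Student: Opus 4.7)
The plan is to endow $\mathfrak{Brt}(\mathcal{H}, A)$ with the product
\[
(\alpha \diamond \beta)(h) := \beta\bigl(t(\alpha(h_{(1)})) \, h_{(2)}\bigr),
\]
which under the duality between biretractions and bisections of the associated groupoid scheme corresponds to the bisection composition $(u,X)\cdot(v,Y)$, with the counit $\varepsilon: \mathcal{H} \to A$ as the unit. The pseudo-inverse of $\alpha$ will be
\[
\alpha^\star := \phi_\alpha^{-1} \circ \alpha \circ S,
\]
where $\phi_\alpha: A\,e^\alpha \to A\,\alpha(1_\mathcal{H})$ is the bijection $\alpha \circ t|_{A\,e^\alpha}$ provided by (BRT2).

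First I would check that $\alpha \diamond \beta$ is well-defined on the Takeuchi tensor product, using the balancing $h\,s(a) \otimes_A k = h \otimes_A t(a)\,k$ together with commutativity of $A$ and multiplicativity of $\alpha$, $\beta$ and $t$. Multiplicativity of $\alpha \diamond \beta$ then follows from $\Delta$ being an algebra map; (BRT1) uses $\Delta(s(a)) = 1_\mathcal{H} \otimes_A s(a)$ to compute $(\alpha \diamond \beta)(s(a)) = a\,(\alpha \diamond \beta)(1_\mathcal{H})$. For (BRT2), I would take
\[
e^{\alpha \diamond \beta} := \phi_\alpha^{-1}\bigl(\alpha(1_\mathcal{H})\,e^\beta\bigr) \in A\,e^\alpha,
\]
verify directly that it is idempotent, that $(\alpha \diamond \beta) \circ t(e^{\alpha \diamond \beta}) = (\alpha \diamond \beta)(1_\mathcal{H})$, and that the restriction of $(\alpha \diamond \beta) \circ t$ to $A\,e^{\alpha \diamond \beta}$ is bijective onto $A\,(\alpha \diamond \beta)(1_\mathcal{H})$, the latter reducing to the bijectivities of $\phi_\alpha$ and $\phi_\beta$ on their respective ideals.

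Associativity of $\diamond$ follows by expanding both sides and invoking coassociativity of $\Delta$; the identity axioms $\varepsilon \diamond \alpha = \alpha = \alpha \diamond \varepsilon$ rest on the standard commutative Hopf algebroid identity $t(\varepsilon(h_{(1)}))\,h_{(2)} = h$. For regularity, the crucial step is to compute
\[
(\alpha \diamond \alpha^\star)(h) = \varepsilon(h)\,e^\alpha, \qquad (\alpha^\star \diamond \alpha)(h) = \varepsilon(h)\,\alpha(1_\mathcal{H}),
\]
both following from the antipode identities $h_{(1)} S(h_{(2)}) = t(\varepsilon(h))$ and $S(h_{(1)}) h_{(2)} = s(\varepsilon(h))$ together with the definition of $\phi_\alpha^{-1}$. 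Composing once more with $\alpha$ and $\alpha^\star$ respectively, and using the inclusions $\alpha(h) \in A\,\alpha(1_\mathcal{H})$, $\alpha^\star(h) \in A\,e^\alpha$ together with the idempotency of $\alpha(1_\mathcal{H})$ and $e^\alpha$, yields $\alpha \diamond \alpha^\star \diamond \alpha = \alpha$ and $\alpha^\star \diamond \alpha \diamond \alpha^\star = \alpha^\star$.

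The main technical obstacle will be the genuinely local case: the idempotent $e^{\alpha \diamond \beta}$ is not canonical but emerges from the interplay between $\alpha(1_\mathcal{H})$ and $e^\beta$ through the partial inverse $\phi_\alpha^{-1}$, rather than from either alone. Establishing the bijectivity of the restricted target map together with the regularity identities will require careful bookkeeping of the four idempotents $\alpha(1_\mathcal{H}), e^\alpha, \beta(1_\mathcal{H}), e^\beta$ and the unital ideals they generate; in the global case these complications disappear and the argument reduces to standard antipode manipulations.
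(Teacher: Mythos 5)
Your proposal is correct and follows essentially the same route as the paper: the product $\alpha\diamond\beta$ is exactly the convolution $\beta\circ t\circ\alpha(h_{(1)})\,\beta(h_{(2)})$ used there, the pseudo-inverse $\phi_\alpha^{-1}\circ\alpha\circ S$ and the idempotent $e^{\alpha\diamond\beta}=\phi_\alpha^{-1}(\alpha(1_{\mathcal{H}})\,e^\beta)$ coincide with the paper's choices, and the key identities $(\alpha\diamond\alpha^\star)(h)=\varepsilon(h)\,e^\alpha$ and $(\alpha^\star\diamond\alpha)(h)=\varepsilon(h)\,\alpha(1_{\mathcal{H}})$ are precisely the computations the paper uses to conclude regularity. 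The bookkeeping of the four idempotents that you flag as the main obstacle is indeed where the paper spends most of its effort, and your outline handles it the same way.
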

	\begin{proof}
		First, let us construct a convolution multiplication in the set of local biretractions of $\mathcal{H}$. As the local biretractions are morphisms of right $A$-modules, we use the following convolution product: for $\alpha ,\beta \in \mathfrak{Brt} (\mathcal{H}, A)$ define for any $h\in \mathcal{H}$
		\[
		(\alpha \ast \beta )(h)= \beta ( \alpha (h_{(1)}) \triangleright h_{(2)})  =\beta \circ t \circ \alpha (h_{(1)})\, \beta (h_{(2)}) .
		\]
		
		In what follows, in order to make the reading more fluid, we are omitting the sign of composition of maps whenever it is clear, since the the maps are already applied to elements. The convolution product $\alpha\ast \beta$ is a local biretraction, because $\alpha\ast \beta$ is multiplicative and satisfies (BRT1) and (BRT2).
		\begin{itemize}
			\item[(BRT1)] for each $a\in A,$
			\begin{align*}
				(\alpha\ast \beta)\circ s (a) & = \beta t \alpha(1_\mathcal{H})\,\beta(s(a))\\
				& = a\, \beta t \alpha(1_\mathcal{H}) \beta(1_\mathcal{H})\\
				& = a \,(\alpha\ast \beta)(1_\mathcal{H}).
			\end{align*} 
			\item[(BRT2)] Since $t$ represents the left action, we can use the fact that $\Delta  (t(a)) =t(a)\, \otimes_A 1_{\mathcal{H}}$ for every $a\in A,$  which implies that 
			\begin{equation} \label{equation.precomposition.convolution.with.t}
				(\alpha\,\ast\,\beta)\,\circ\, t = \beta\circ t\circ\alpha\circ t.    
			\end{equation}
			Therefore, the domain of $(\alpha \ast \beta )\circ t$ must be the preimage by $\alpha \circ t$ of the intersection of the ideal $A\alpha (1_{\mathcal{H}})$ with the ideal $Ae^\beta$, then the idempotent associated to $\alpha \ast \beta$ must be $(\alpha \circ t)^{-1} (e^\beta \alpha (1_{\mathcal{H}}))$. Indeed,
			\begin{align*}
				(\alpha\ast \beta )\circ t \left( (\alpha\circ t)^{-1} \left(e^\beta\, \alpha(1_\mathcal{H})\right)\right) & = (\beta\circ t) (\alpha\circ t)  (\alpha\circ t)^{-1} \left( e^\beta \, \alpha(1_\mathcal{H})\right)\\
				& = \beta\circ t \left( \alpha(1_\mathcal{H})\, e^\beta\right)\\
				& =  \beta t \alpha (1_\mathcal{H})\,\beta (1_\mathcal{H})\\
				& = (\alpha\ast \beta )(1_\mathcal{H}).
			\end{align*}
			Here, we are simplifying the notation by using $(\alpha\circ t)^{-1}= \left((\alpha\circ t)|_{A\, e^\alpha}\right)^{-1}.$
			
			We need to check that the map
			$$(\alpha\ast\beta) \circ t|_{A\, (\alpha\circ t)^{-1} \left(e^\beta\, \alpha(1_\mathcal{H})\right)} : A\, (\alpha\circ t)^{-1} \left(e^\beta\, \alpha(1_\mathcal{H})\right) \longrightarrow A\, (\alpha\ast\beta)(1_\mathcal{H})$$
			is a bijection. In fact,
			\vspace{10pt}
			\begin{itemize}
				\item[$\bullet$] $(\alpha\,\ast\,\beta)\, \circ \,t\,|_{A\, (\alpha\,\circ\, t)^{-1} \left(e^\beta\, \alpha(1_\mathcal{H})\right)}$
				is surjective: for each $a\in A,$
				\begin{align*}
					a\,(\alpha\ast\beta)(1_\mathcal{H}) & =a\, (\beta\circ t) (\alpha(1_\mathcal{H}))\,\beta(1_\mathcal{H}) \\
					& = a\, (\beta\circ t) \left(\alpha(1_\mathcal{H})\, e^\beta\right)\\
					& =  (\beta\circ t) (\beta\circ t)^{-1} \left( a\,(\beta\circ t) \left(\alpha(1_\mathcal{H}) \, e^\beta \right)\, \beta(1_\mathcal{H}) \right)\\
					& = (\beta \circ t) \left( (\beta \circ t)^{-1} (a \beta (1_{\mathcal{H}}))  \alpha (1_{\mathcal{H}}) e^\beta \right) \\
					& = (\beta \circ t) (\alpha \circ t) (\alpha \circ t)^{-1} \left( (\beta \circ t)^{-1} (a \beta (1_{\mathcal{H}}))  \alpha (1_{\mathcal{H}}) e^\beta \right) \\
					& = (\beta \circ t) (\alpha \circ t) \left( (\alpha \circ t)^{-1} \left( (\beta \circ t)^{-1} (a \beta (1_{\mathcal{H}}))  \alpha (1_{\mathcal{H}}) \right) (\alpha \circ t)^{-1} \left(\alpha (1_{\mathcal{H}}) e^\beta \right) \right) \\
					& = (\alpha \ast \beta )\circ t \left( (\alpha \circ t)^{-1} \left( (\beta \circ t)^{-1} (a \beta (1_{\mathcal{H}}))  \alpha (1_{\mathcal{H}}) \right) (\alpha \circ t)^{-1} \left(\alpha (1_{\mathcal{H}}) e^\beta \right)\right) 
				\end{align*}
				
				\item[$\bullet$] $(\alpha\ast\beta) \circ t|_{A\, (\alpha\circ t)^{-1} \left(e^\beta\,\alpha(1_\mathcal{H})\right)}$ is injective: suppose that, for some $a\in A,$
				$$(\alpha\ast\beta)\circ t \left(a\, (\alpha\circ t)^{-1} \left(e^\beta\,\alpha(1_\mathcal{H})\right) \right) = 0.$$
				Since $\alpha\circ t|_{A\, \alpha(1_\mathcal{H})}$ and $\beta\circ t|_{A\, \beta(1_\mathcal{H})}$ are injective, we have
				
				\begin{align*}
					& 0 = (\beta\circ t) (\alpha\circ t) \left(a\, (\alpha\circ t)^{-1} \left(e^\beta\,\alpha(1_\mathcal{H})\right) \right) \\
					& = (\beta \circ t)(\alpha\circ t) \left(a\, e^\alpha (\alpha\circ t)^{-1} \left(e^\beta\,\alpha(1_\mathcal{H})\right) \right) \\
					& = (\beta \circ t) \left( (\alpha\circ t) \left(a\, e^\alpha\right) e^\beta\,\alpha(1_\mathcal{H}) \right)\\
					\Rightarrow\,\,\,\, & 0 =  (\alpha\circ t) \left(a\, e^\alpha\right) e^\beta\,\alpha(1_\mathcal{H})  = \alpha\circ t \left( a\, e^{\alpha} (\alpha\circ t)^{-1}\left(e^\beta \,\alpha(1_\mathcal{H})\right)\, \right)\\
					\Rightarrow\,\,\,\, & 0 = a\, e^\alpha (\alpha\circ t)^{-1}\left(e^\beta \, \alpha(1_\mathcal{H})\right)  = a\, \,(\alpha\circ t)^{-1}\left(e^\beta \,\alpha(1_\mathcal{H})\right).
				\end{align*}
				
			\end{itemize}
			Therefore, $\alpha\ast\beta$ is a local biretraction with $e^{\alpha\ast\beta}=(\alpha \circ t)^{-1} \left(e^\beta\, \alpha(1_\mathcal{H})\right).$
		\end{itemize}

		The associativity of the convolution product of biretractions of $\mathcal{H}$ can be viewed directly once we know that local birretractions are right $A$ module morphisms and the convolution product on $\text{Hom}_A (\mathcal{H} ,A)$, given by 
		\[
		\alpha \ast \beta (h)= \beta (\alpha (h_{(1)})\triangleright h_{(2)}) ,
		\]
		is associative \cite{BW}. Nevertheless, for sake of completeness, we are going to check directly the associtivity here using the notations and conventions for biretractions. Consider $\alpha ,\beta ,\gamma \in \mathfrak{Brt} (\mathcal{H}, A)$, then for any $h\in \mathcal{H}$,
		\begin{eqnarray*}
			((\alpha \ast \beta ) \ast \gamma) (h) & = & \gamma \, t \,(\alpha\ast \beta ) (h_{(1)})\, \gamma (h_{(2)}) \\
			& = & \gamma \, t \, (\beta \, t \, \alpha (h_{(1)}) \,\beta (h_{(2)}))\, \gamma (h_{(3)}) \\
			& = & \gamma \, t \, \beta \, t \, \alpha (h_{(1)}) \,\gamma  \, t  \,\beta (h_{(2)})) 
			\gamma (h_{(3)}) \\
			& = & (\beta \ast \gamma ) \, t \, \alpha (h_{(1)}) \,(\beta \ast \gamma ) (h_{(2)})\\
			& =& (\alpha \ast (\beta \ast \gamma ))(h).
		\end{eqnarray*}	
		
		The counit $\varepsilon :\mathcal{H} \rightarrow A$ is a global biretraction, because it is linear, multiplicative, $\varepsilon (1_{\mathcal{H}})=1_A$ and $\varepsilon\circ t= \varepsilon\circ s = \textrm{Id}_A.$ The counit $\varepsilon$ is the unit for the convolution product. Indeed, for any local biretraction $\alpha$ and any $h\in \mathcal{H}$, we have
		\begin{eqnarray*}
			\varepsilon \ast \alpha (h) & = & \alpha \, t \, \varepsilon (h_{(1)}) \,\alpha (h_{(2)})\\
			& = & \alpha \left( t(\varepsilon (h_{(1)})) \,h_{(2)} \right) \\
			& = & \alpha (h) ,
		\end{eqnarray*}	
		and
		\begin{eqnarray*}
			\alpha \ast \varepsilon (h) & = & \varepsilon \, t \, \alpha (h_{(1)})\, \varepsilon (h_{(2)}) \\
			& = & \alpha (h_{(1)})\, \varepsilon (h_{(2)}) \\
			& = & \alpha (h_{(1)}) \,\alpha \, s  (\varepsilon (h_{(2)})) \\
			& = & \alpha \left( h_{(1)} \, s (\varepsilon (h_{(2)})) \right) \\
			& = & \alpha (h).
		\end{eqnarray*}	
		
		Therefore, the set $\mathfrak{Brt} (\mathcal{H}, A)$ is a monoid relative to the above defined convolution product.
		
		Now, we have to define a pseudo-inverse for any biretraction $\alpha \in \mathfrak{Brt} (\mathcal{H}, A)$. Define
		\[
		\alpha^* = (\alpha \circ t)^{-1} \circ \alpha \circ S,
		\]
		where $S:\mathcal{H}\rightarrow\mathcal{H}$ is the Hopf algebroid map from Definition \ref{hopfalgebroid}. Then $\alpha^*$ is well-defined, because $(\alpha\, \circ\, t)^{-1}$ is applied to an element belonging to $\alpha (\mathcal{H})=A\alpha (1_{\mathcal{H}})$ and it is multiplicative. Observe that
		$$\alpha^\ast (1_\mathcal{H}) = (\alpha\circ t)^{-1}\circ\alpha \circ S(1_\mathcal{H})= (\alpha\circ t)^{-1}\circ\alpha (1_\mathcal{H}) = e^\alpha.$$
		So, $\alpha^\ast $ is a biretraction, because
		\begin{align*}
			\alpha^\ast \circ s(a) & = (\alpha\circ t)^{-1}\, \alpha\, (S\circ s)(a)\\
			& = (\alpha\circ t)^{-1}\, (\alpha\circ t)(a)\\
			& = a\, e^\alpha\\
			& = a \,\alpha^\ast (1_\mathcal{H})
		\end{align*}
		and 
		\[
		\alpha^\ast\circ t|_{A\,\,\alpha(1_\mathcal{H})}=(\alpha \circ t)^{-1} \circ \alpha \circ S \circ t|_{A\,\,\alpha(1_\mathcal{H})}= (\alpha \circ t)^{-1} \circ \alpha \circ s|_{A\,\,\alpha(1_\mathcal{H})}= (\alpha\circ t)^{-1}|_{A\,\,\alpha(1_\mathcal{H})}.
		\]
		This implies that $\alpha^\ast\circ t|_{A\,\,\alpha(1_\mathcal{H})}: A\,\,\alpha(1_\mathcal{H}) \rightarrow A\,\,e^\alpha$ is a bijection and  $e^{\alpha^\ast}=\alpha(1_\mathcal{H}).$
		
		Finally, we need to prove that every biretraction $\alpha:\mathcal{H}\rightarrow A$ satisfies $\alpha \ast \alpha^*  \ast \alpha= \alpha$ and $\alpha^\ast \ast \alpha \ast \alpha^\ast=\alpha^\ast.$ First, observe that
		
		\begin{eqnarray}\label{aaestrela}
			(\alpha \ast \alpha^*) (h) & = & \alpha^\ast \, t \, \alpha(h_{(1)})\, \alpha^\ast (h_{(2)}) \nonumber\\
			& = & (\alpha \circ t)^{-1} \, \alpha \circ (S \circ t) \, \alpha (h_{(1)})\, (\alpha \circ t)^{-1} \, \alpha \, S(h_{(2)})  ) \nonumber\\
			& = & (\alpha \circ t)^{-1} \left( (\alpha \circ s) ( \alpha (h_{(1)}))\, \alpha (S(h_{(2)}))  \right) \nonumber\\
			& = & (\alpha \circ t)^{-1} \left( \alpha (h_{(1)}) \,\alpha (S(h_{(2)}))  \right)\nonumber \\
			& = & (\alpha \circ t)^{-1} \, \alpha (h_{(1)} S(h_{(2)})) \nonumber  \\
			& = & (\alpha \circ t)^{-1} \, (\alpha \circ t) (\varepsilon (h)) \nonumber\\
			& = & \varepsilon (h) \, e^\alpha 
		\end{eqnarray}	
		and
		\begin{eqnarray}\label{aestrelaa}
			(\alpha^* \ast \alpha) (h) & = &  (\alpha \circ t) \, \alpha^* (h_{(1)}) \,\alpha  (h_{(2)})\nonumber \\
			& = & (\alpha \circ t) \, (\alpha \circ t)^{-1} \, \alpha ( S(h_{(1)})) \, \alpha (h_{(2)}) \nonumber\\
			& = & \alpha (S(h_{(1)}))\, \alpha (h_{(2)})\nonumber \\
			& = & \alpha (S(h_{(1)})\, h_{(2)})\nonumber \\
			& = & (\alpha \circ s)  \varepsilon (h)\nonumber \\
			& = & \varepsilon (h) \, \alpha (1_{\mathcal{H}}) 
		\end{eqnarray}
		for every $h\in\mathcal{H}.$ Then, for any $h\in \mathcal{H}$
		\begin{eqnarray}\label{uustaru}
			\alpha \ast \alpha^*  \ast \alpha (h) & = & (\alpha \circ t) \, (\alpha \ast \alpha^*)(h_{(1)})\, \alpha (h_{(2)}) \nonumber \\
			& = & (\alpha\circ t) (\varepsilon(h_{(1)})\, e^\alpha)\, \alpha(h_{(2)})\nonumber\\
			& = & \alpha (t(\varepsilon (h_{(1)}))\, h_{(2)}) \nonumber \\
			& = & \alpha (h) \nonumber
		\end{eqnarray}	
		and
		\begin{eqnarray}\label{ustaruustar}
			\alpha^* \ast \alpha \ast \alpha^* (h) & = & (\alpha \ast \alpha^*)\, t \, \alpha^* (h_{(1)}) (\alpha \ast \alpha^* )(h_{(2)}) \nonumber \\
			& = & \varepsilon (t(\alpha^* (h_{(1)}))) \varepsilon (h_{(2)}) e^\alpha \nonumber \\
			& = & \alpha^* (h_{(1)}) \varepsilon (h_{(2)}) \alpha^* (1_{\mathcal{H}}) \nonumber \\
			& = & \alpha^* (h_{(1)} s(\varepsilon (h_{(2)}))) \nonumber\\
			& = & \alpha^* (h). \nonumber
		\end{eqnarray}	
		
		Therefore, $\mathfrak{Brt} (\mathcal{H}, A)$ is a regular monoid.
	\end{proof}
	
	\begin{rmk}
		We cannot prove, in general, that $\mathfrak{Brt} (\mathcal{H}, A)$ is an inverse semigroup. That's because its idempotents do not always commute. Consider an idempotent $E\in \mathfrak{Brt}(\mathcal{H}, A)$ and denote its associated idempotent in $A$ by $e^E$ then, for any $a\in A$
		\begin{eqnarray*}
			E\circ t (a)  & = &  (E\ast E )(t(a)) =E\, t \, E (t(a)) E(1_{\mathcal{H}}) \\
			& = & E\, t \, E \, t (a).
		\end{eqnarray*}
		Then, $E\circ t :A \rightarrow A$ is a linear and multiplicative map in $A$ which is idempotent with respect to the composition. Moreover,
		\[
		E\circ t (e^E)=  E(1_{\mathcal{H}}) ,
		\]
		which leads to
		\[
		E\, t \, E \, t (e^E)  E\, t (e^E) =  E(1_{\mathcal{H}})E(1_{\mathcal{H}}) =E(1_{\mathcal{H}}).
		\]
		
		Applying $(E\circ t)^{-1}$  on both sides, we obtain
		\[
		E (t(e^E)) e^E =(E\circ t)^{-1} (E(1_{\mathcal{H}})),
		\]
		that is
		\[
		E(1_{\mathcal{H}}) e^E=e^E,
		\]
		which implies that $Ae^E \subseteq AE(1_{\mathcal{H}})$. But the map $(E\circ t):Ae^E \rightarrow AE(1_{\mathcal{H}})$ is bijective, then there exists an element $a\in A$ such that $e^E =E\circ t (ae^E)$, then
		\[
		E(1_{\mathcal{H}})=E\circ t (e^E) =E\circ t \circ E\circ t (ae^E)=E\circ t(ae^E)=e^E.
		\]
		Therefore, $E\circ t$ is also bijective restricted to the ideal $A. E(1_{\mathcal{H}})=A.E\circ t (1_A)$. 
		
		Denote by $\tilde{P}Aut(A)$ the set of all linear and multiplicative maps $\varphi: A\rightarrow A$ which are idempotent with relation to composition and bijective when restricted to the unital ideal $A \varphi (1_A)$. Let us call such maps as partial automorphisms of $A$. Observe that for each partial automorphism $\varphi\in \tilde{P}Aut(A)$  we have that $\varphi(a)=a\varphi(1_A)$ for every $a\in A,$ because
		$$\varphi \left( \varphi(a)-a\varphi(1_A) \right) =\varphi(\varphi(a))-\varphi(a) \varphi(\varphi(1_A))=\varphi(a)-\varphi(a)\varphi(1_A) = \varphi(a)-\varphi(a)=0.$$
		Then 
		\[
		\varphi(a)-a\varphi(1_A)=\varphi(a)\varphi (1_A)-a\varphi(1_A)=(\varphi(a)-a)\varphi(1_A)\in A\, \varphi(1_A)
		\]
		and $\varphi$ being bijective in $A\varphi(1_A)$ imply that $\varphi(a)-a\varphi(1_A)=0.$

		Lastly, observe that for two idempotents $E,F\in \mathfrak{Brt} (\mathcal{H}, A)$, we have 
		\begin{eqnarray*}
			E\ast F (h) & = & F\circ t (E(h_{(1)})) F(h_{(2)}) \\
			& = & E(h_{(1)}) F\circ t(1_A) F(h_{(2)}) \\
			& = & E(h_{(1)}) F(1_{\mathcal{H}}) F(h_{(2)}) \\
			& = & E(h_{(1)}) F(h_{(2)}) .
		\end{eqnarray*}	
		Similarly, $F\ast E (h) =F(h_{(1)}) E(h_{(2)})$. Therefore, unless for the case of the Hopf algebroid $\mathcal{H}$ being cocommutative, there is no a priori reason to suppose that the idempotents of $\mathfrak{Brt}(\mathcal{H},A)$ should commute in general.
	\end{rmk}
	
	\begin{rmk}\label{convinv}
		Let $\alpha,\beta\in \mathfrak{Brt} (\mathcal{H}, A).$ Then,
		$$\left(((\alpha\ast \beta)\circ t)|_{A\, e^{\alpha\ast \beta} }\right)^{-1}= \left((\alpha\circ t)|_{A\, e^\alpha}\right)^{-1} \circ \left((\beta\circ t)|_{A\, e^\beta}\right)^{-1},$$
		or simplifying the notation as before,
		$$((\alpha\ast\beta)\circ t)^{-1} = (\alpha\circ t)^{-1}\circ (\beta\circ t)^{-1}.$$
		In fact, we have
		
		\begin{align*}
			(\alpha\circ t)^{-1}\circ & \,(\beta\circ t)^{-1} \circ(\alpha\ast\beta)\circ t \left(a\, e^{\alpha\ast\beta}\right)= \\
			& = (\alpha\circ t)^{-1}\circ (\beta\circ t)^{-1} \circ \beta\circ t \circ \alpha\circ t  \left(a\, (\alpha\circ t)^{-1} \left( e^\beta\, \alpha (1_\mathcal{H})\right) \right)\\
			& = (\alpha\circ t)^{-1}\circ (\beta\circ t)^{-1} \circ \beta\circ t \left( (\alpha \circ t)(a) e^\beta \alpha (1_{\mathcal{H}}) \right)\\
			& = (\alpha\circ t)^{-1} \left( (\alpha\circ t)(a)\, e^\beta \, \alpha(1_\mathcal{H})  \right)\\
			& = (\alpha\circ t)^{-1} \circ (\alpha\circ t) \left( a \, (\alpha\circ t)^{-1}\left(e^\beta\, \alpha(1_\mathcal{H})\right)  \right)\\
			& =  a \, (\alpha\circ t)^{-1}\left(e^\beta\, \alpha(1_\mathcal{H})\right) \\
			& = a\, e^{\alpha\ast\beta}
		\end{align*}
		and
		\begin{align*}
			( (\alpha\ast\beta)\circ t)&\,  \circ (\alpha\circ t)^{-1}\circ \,(\beta\circ t)^{-1}  \left(a\, (\alpha\ast\beta) (1_\mathcal{H})\right) =\\
			& = \beta\circ t\circ \alpha\circ t\circ (\alpha\circ t)^{-1}\circ (\beta\circ t)^{-1}\left( a\,\beta\circ t\circ \alpha (1_\mathcal{H})\, \beta(1_\mathcal{H})\right)\\
			& = \beta\circ t\circ \alpha\circ t\circ (\alpha\circ t)^{-1} \left( (\beta\circ t)^{-1} (a\,\beta(1_\mathcal{H}))\, e^\beta\, \alpha(1_\mathcal{H}) \right) \\
			& = \beta\circ t \left( (\beta\circ t)^{-1} (a\,\beta(1_\mathcal{H}))\,  \alpha(1_\mathcal{H}) \, e^\beta\right)\\
			& = \beta\circ t\circ (\beta\circ t)^{-1} \left(a\, \beta\circ t\left(\alpha(1_\mathcal{H})\, e^\beta\right)\, \beta(1_\mathcal{H})\right)\\
			& = a\, \beta\circ t\left(\alpha(1_\mathcal{H})\, e^\beta\right)\\
			& = a\, \beta \circ t \circ \alpha (1_{\mathcal{H}}) \beta (1_{\mathcal{H}})\\
			& = a\, (\alpha\ast\beta) (1_\mathcal{H})
		\end{align*}
		for every $a\in A.$
	\end{rmk}

	\begin{rmk} Theorem \ref{theorem.Brt(H,A).is.a.regular.monoid} shows that $\mathcal{B}rt(\mathcal{H},A)$ is a regular monoid, and it was already noted that, from \emph{(BRT2)}, each biretraction  $\alpha: \mathcal{H}\rightarrow A,$ yields the isomorphism of ideals $\alpha \circ t \mid_{Ae^{\alpha}}: A e^{\alpha} \to A \alpha(1_{\mathcal{H}})$, which is an element of the inverse semigroup $\mathcal{I}(A)$ of the partial bijections between unital ideals of $A$. This is in fact a surjective antimorphism of semigroups.
		
		Let $End_{Alg}(A)$ denote the $\K$-algebra of multiplicative (not necessarily unital) $\K$-linear endomorphisms of $A$.
		Since $(\alpha\ast\beta)\circ t=\beta\circ t\circ\alpha\circ t$ for all $\alpha,\beta\in\mathcal{B}rt(\mathcal{H},A)$ (see Equation \eqref{equation.precomposition.convolution.with.t}), 
		there exists an antimultiplicative map $\xi : \mathcal{B}rt(\mathcal{H},A) \to End_{Alg}(A)$ given by $\alpha\mapsto \alpha\circ t$.

		Now consider the subset $\JJ(A) \subset End_{Alg}(A)$ of the multiplicative endomorphisms ${\varphi :A \to A}$ such that there exists an idempotent $e \in A$ satisfying
		\begin{enumerate}
			\item[(i)] $\varphi(A) = A\varphi(e)$;
			\item[(ii)] $\varphi|_{A\, e}: A\, e\rightarrow A\, \varphi(e)$ is an isomorphism of algebras. 
		\end{enumerate}
		By \emph{(BRT2)}, if $\alpha \in \mathcal{B}rt(\mathcal{H},A)$ then $\xi (\alpha) \in \JJ(A) $.
		Notice that it follows from (i) that ${\varphi (a) = \varphi(a) \varphi(e) = \varphi(ae)}$ for any $a \in A$ and any $\varphi \in \JJ(A)$.
		
		$\JJ(A)$ is a multiplicative subsemigroup of $End_{Alg}(A)$. Let $\varphi, \psi \in \JJ(A)$ and $a \in A$, with corresponding idempotents $e$ and $f$; we claim that $\psi \circ \varphi (A) = A .\psi (\varphi(e)) \psi(f)$. In fact, 
		\[
		\psi ( \varphi (a)) = \psi (\varphi (ae)) = \psi(\varphi (ae) f) = \psi \circ \varphi (a) \psi (\varphi(e)) \psi(f),
		\]
		so that $\psi \circ \varphi (A) \subset A .\psi (\varphi(e)) \psi(f)$. Conversely, given $b \in A$, it follows from (ii) that there exist $b', b'' \in A$ such that $b \psi (f) = \psi (b' f)$ and $b' f  \varphi(e) = \varphi (b'' e)$, hence 
		\[
		b \ (\psi (\varphi(e)) \psi(f) = \psi (b' f) \psi (\varphi(e)) = \psi (b' f  \varphi(e)) = \psi (\varphi (b'' e)),
		\]
		therefore $\psi \circ \varphi (A) = A .\psi (\varphi(e)) \psi(f)$.
		
		Let $g = (\varphi\mid_{Ae})^{-1} (\varphi(e)f)$. Then $\psi \circ \varphi \mid_{Ag} : Ag \to  A.\psi (\varphi(e)) \psi(f) $ is an isomorphism of algebras. This map is injective, since it can be written as a composition of injetive maps; and it is surjective, given that 
		\[
		\psi(\varphi(ag)) = \psi (\varphi(a)) \psi (\varphi(g)) = \psi (\varphi(a)) \psi ( \varphi(e)f) = \psi( \varphi (ae)f) =\psi(\varphi (a)f)= \psi (\varphi (a))
		\]
		and that $\psi (\varphi (A)) = A .\psi (\varphi(e)) \psi(f)$. Therefore $\JJ(A) $ is a multiplicative subsemigroup of $\End_{Alg}(A)$.
		
		The map $\xi :\mathcal{B}rt(\mathcal{H},A) \to  \JJ(A) $ has a section and therefore is surjective.  Indeed, for each $\varphi \in\JJ(A)$ with associated idempotent $e$ we define a biretraction $\alpha_{\varphi}:\mathcal{H}\rightarrow A$ by 
		$$\alpha_{\varphi}(h)=\varepsilon(h_{(2)})\,\varphi \left(\varepsilon(h_{(1)})\, e\right).$$
		Then
		$$\alpha_{\varphi}\circ t(a)=\varepsilon(1_\mathcal{H}) \,\varphi\left(\varepsilon\circ t(a)\, e\right)=\varphi(ae) = \varphi(a),$$
		which implies that $\xi(\alpha_{\varphi})= \varphi.$ Also, $\alpha_{\varphi}$ is a biretraction because
		$$\alpha_{\varphi}\circ s(a)=\varepsilon\circ s(a)\, \varphi(\varepsilon(1_\mathcal{H})e)= a\,\varphi(e)$$
		and
		$$\alpha_{\varphi}\circ t(a)= \varphi(ae),$$
		leading to $\alpha_{\varphi}\circ t \mid_{A\, e}=\varphi\mid_{A\, e} : Ae\to A \varphi(e)$.
		
		Finally, let $\mathcal{I} (A)$ be the inverse semigroup of isomorphisms between unital ideals of $A$. We have a morphism of semigroups 
		$$ F : \JJ(A) \to \mathcal{I} (A), \ \ \varphi \mapsto \varphi\mid_{Ae} : Ae \to A \varphi(e),$$
		where $e$ is, as before, the idempotent associated to $\varphi$.
		This morphism is actually an isomorphism: its inverse takes an isomorphism of ideals $\theta : Ae \to A\theta(e)$ to the map $\varphi_\theta : A \to A$ given by $\varphi_\theta (a) = \theta(ae)$. Therefore, we obtain a surjective antimorphism of semigroups $$F \circ \xi : \mathcal{B}rt(\mathcal{H},A) \to \mathcal{I}(A)$$ defined by 
		$$(F \circ \xi) (\alpha) = \alpha \circ t \mid_{Ae^{\alpha}}: A e^{\alpha} \to A \alpha(1_{\mathcal{H}}).$$ 
		
	\end{rmk}
	\color{black}
	
	\vspace{15pt}
	Consider now the free vector space generated by the biretractions of $\mathcal{H}$ and extend linearly the convolution product to this space. Then, we have an algebra structure on the space $\K \mathfrak{Brt} (\mathcal{H}, A)$, henceforth denoted by $\mathfrak{B} (\mathcal{H})$. 
	
	\begin{thm}
		Let $\mathcal{H}$ be a commutative Hopf algebroid over a commutative algebra $A$. Then the algebra $\mathfrak{B}(\mathcal{H})$, generated by the set of biretractions of $\mathcal{H}$ with the convolution product is a unital quantum inverse semigroup.
	\end{thm}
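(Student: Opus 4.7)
The plan is to use the basis $\mathfrak{Brt}(\mathcal{H},A)$ of $\mathfrak{B}(\mathcal{H})$ and declare every biretraction to be group-like under $\Delta$ and to have its pseudo-antipode given by the pseudo-inverse $\alpha \mapsto \alpha^{*}$ already constructed in Theorem~\ref{theorem.Brt(H,A).is.a.regular.monoid}. Concretely, extend linearly
\[
\Delta(\alpha) = \alpha \otimes \alpha, \qquad \mathcal{S}(\alpha) = \alpha^{*}, \qquad \alpha \in \mathfrak{Brt}(\mathcal{H},A).
\]
Since $\mathfrak{Brt}(\mathcal{H},A)$ is a monoid with unit $\varepsilon$, the algebra $\mathfrak{B}(\mathcal{H})$ is unital, settling (QISG1) and the unital part of the definition; since $\varepsilon\circ t = \mathrm{Id}_{A}$, one has $\varepsilon^{*} = \varepsilon$, so $\mathcal{S}$ preserves the unit.

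For (QISG2), coassociativity is immediate because every basis element is group-like, and multiplicativity reduces to checking that $\alpha * \beta$ is again group-like, which is so because $\mathfrak{Brt}(\mathcal{H},A)$ is closed under $*$. Axiom (QISG3)(ii) is just the statement $\alpha * \alpha^{*} * \alpha = \alpha$ and $\alpha^{*} * \alpha * \alpha^{*} = \alpha^{*}$, already proved in Theorem~\ref{theorem.Brt(H,A).is.a.regular.monoid}.

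The step I expect to be the most delicate is (QISG3)(i), the identity $(\alpha*\beta)^{*} = \beta^{*} * \alpha^{*}$. I would establish it by direct computation, combining three ingredients already in place: the anticomultiplicativity $\Delta\circ S = (S\otimes S)\circ \Delta^{op}$ for a commutative Hopf algebroid, the inversion formula $((\alpha*\beta)\circ t)^{-1} = (\alpha\circ t)^{-1}\circ (\beta\circ t)^{-1}$ from Remark~\ref{convinv}, and the multiplicativity of $(\alpha\circ t)^{-1}$ on its domain. After expanding $(\alpha*\beta)^{*}(h)$ one reaches
\[
(\alpha\circ t)^{-1}\!\left[\alpha(S(h_{(2)}))\,\beta^{*}(h_{(1)})\, e^{\beta}\right],
\]
while expanding $(\beta^{*}*\alpha^{*})(h)$ gives the same expression without the trailing $e^{\beta}$; these agree because $\beta^{*}$ takes values in $Ae^{\beta}$, so $\beta^{*}(h_{(1)})e^{\beta} = \beta^{*}(h_{(1)})$.

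Axiom (QISG4) follows from a pleasant structural observation. The computations \eqref{aaestrela} and \eqref{aestrelaa} show that
\[
(\alpha*\alpha^{*})(h) = \varepsilon(h)\,e^{\alpha}, \qquad (\beta^{*}*\beta)(h) = \varepsilon(h)\,\beta(1_{\mathcal{H}}),
\]
so the idempotents in the images of $I*\mathcal{S}$ and $\mathcal{S}*I$ all lie in the subfamily $\{E_{e} : E_{e}(h) := \varepsilon(h)\,e,\; e\in A \text{ idempotent}\}$. A direct calculation, using that $\varepsilon$ is an algebra morphism and $E_{e}\circ t$ is multiplication by $e$, gives $E_{e} * E_{f} = E_{ef}$, and since $A$ is commutative this product is symmetric in $e,f$. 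Extending bilinearly to $\mathfrak{B}(\mathcal{H})$ yields (QISG4), completing the verification.
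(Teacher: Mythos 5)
Your proposal is correct and follows essentially the same route as the paper: the same group-like comultiplication, the same pseudo-antipode $\alpha\mapsto\alpha^{*}$, the same use of Remark~\ref{convinv} together with anticomultiplicativity of $S$ for the antimultiplicativity of $\mathcal{S}$, and the same reduction of (QISG4) to the formulas $(\alpha\ast\alpha^{*})(h)=\varepsilon(h)\,e^{\alpha}$ and $(\beta^{*}\ast\beta)(h)=\varepsilon(h)\,\beta(1_{\mathcal{H}})$ (your observation $E_{e}\ast E_{f}=E_{ef}$ is a tidy repackaging of the paper's direct computation). The only point stated too quickly is $\mathcal{S}(\varepsilon)=\varepsilon$: since $\varepsilon^{*}=(\varepsilon\circ t)^{-1}\circ\varepsilon\circ S=\varepsilon\circ S$, you still need the short Hopf-algebroid identity $\varepsilon\circ S=\varepsilon$, which the paper verifies explicitly.
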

	
	\begin{proof}
		As we have already proven in last theorem, $\mathfrak{Brt} (\mathcal{H}, A)$ is a regular monoid, then the algebra $\mathfrak{B} (\mathcal{H})$ is a unital algebra.
		
		Define first the comultiplication $\underline{\Delta} :\mathfrak{B} (\mathcal{H}) \rightarrow \mathfrak{B} (\mathcal{H}) \otimes \mathfrak{B} (\mathcal{H})$  on the basis elements $\alpha \in \mathfrak{Brt} (\mathcal{H}, A)$ as $\underline{\Delta} (\alpha ) =\alpha \otimes \alpha$,  
		then extend linearly for the whole algebra $\mathfrak{B} (\mathcal{H})$. 
		Then, it is obvious that the comultiplication $\Delta$ is multiplicative, since for any $\alpha , \beta \in \mathfrak{Brt} (\mathcal{H}, A)$ the convolution product $\alpha \ast \beta$ also belongs to $\mathfrak{Brt} (\mathcal{H}, A)$ 
		
		Define also the pseudo antipode on the basis elements as $\mathcal{S}(\alpha )=\alpha^* = (\alpha \circ t)^{-1} \circ \alpha \circ S$ then extend linearly to $\mathfrak{B}(\mathcal{H})$. Again, to prove that $\mathcal{S}$ is antimultiplicative, it is enough to check on biretractions.
		
		Then, for $h \in \mathcal{H}$ and  $\alpha , \beta \in \mathfrak{Brt} (\mathcal{H}, A)$, we have
		\begin{eqnarray*}
			\mathcal{S}(\alpha \ast \beta)(h) & = & ((\alpha \ast \beta)\circ t)^{-1}\circ (\alpha \ast \beta )\circ S(h) \\
			& = & ((\alpha \ast \beta)\circ t)^{-1} \left( \beta \circ t \circ \alpha \circ S(h_{(2)}) \,\beta \circ S(h_{(1)}) \right) \\
			& \stackrel{(*)}{=} & (\alpha \circ t)^{-1} \circ (\beta \circ t)^{-1} \left( \beta \circ t \circ \alpha \circ S(h_{(2)})\, \beta \circ S(h_{(1)}) \,\beta\circ t \left( e^\beta\right)\right)\\
			& = & (\alpha \circ t)^{-1} \left( \alpha \circ S(h_{(2)})\, e^\beta \,(\beta \circ t)^{-1} \circ \beta \circ S(h_{(1)}) \right)\\
			& = & (\alpha \circ t)^{-1} \circ \alpha \circ S(h_{(2)})\, (\alpha \circ t)^{-1} \circ (\beta \circ t)^{-1} \circ \beta \circ S(h_{(1)})\\
			& = & \alpha^\ast (h_{(2)})\,(\alpha\circ t)^{-1}\circ \beta^\ast (h_{(1)}),
		\end{eqnarray*}	
		in which, we used the result $((\alpha \ast \beta)\circ t)^{-1} =(\alpha \circ t)^{-1} \circ (\beta \circ t)^{-1}$ from the Remark \ref{convinv}.
		
		On the other hand,
		\begin{eqnarray*}
			\left(\mathcal{S}(\beta) \ast \mathcal{S}(\alpha)\right) (h) & = & \alpha^\ast\circ t \circ \beta^\ast (h_{(1)}) \,\alpha^\ast (h_{(2)}) \\
			& = & (\alpha \circ t)^{-1} \circ \alpha \circ S\circ t \circ \beta^\ast (h_{(1)})  \, \alpha^\ast (h_{(2)}) \\
			& = & (\alpha \circ t)^{-1} \circ \alpha \circ s \circ \beta^\ast (h_{(1)})  \,\alpha^\ast (h_{(2)})  \\
			& = & (\alpha \circ t)^{-1} \circ \beta^\ast (h_{(1)}) \,\alpha^\ast (h_{(2)}).
		\end{eqnarray*}	
		
		Consequently, $\mathcal{S}(\alpha \ast \beta)= \mathcal{S}(\beta) \ast \mathcal{S}(\alpha)$ and $\mathcal{S}$ is antimultiplicative.
		
		The equations (\ref{uustaru}) and (\ref{ustaruustar}) imply item (ii) of Axiom (QISG3) of Quantum Inverse Semigroups.
		
		Finally, for checking axiom (QISG4), we use the equations (\ref{aaestrela}) and (\ref{aestrelaa}). Then for $\alpha ,\beta \in \mathfrak{Brt}(\mathcal{H},A)$ and $h\in \mathcal{H}$,
		\begin{eqnarray*}
			\alpha_{(1)} \ast \mathcal{S}(\alpha_{(2)}) \ast \mathcal{S}(\beta_{(1)})\ast \beta_{(2)}(h) & = & (\alpha \ast \alpha^*)\ast (\beta^* \ast \beta)(h) \\
			& = & (\beta^* \ast \beta) \circ t \circ (\alpha \ast \alpha^*) (h_{(1)}) \,(\beta^* \ast \beta)(h_{(2)}) \\
			& = & (\beta^* \ast \beta) \circ t \, (\varepsilon(h_{(1)})\, e^\alpha) \,(\beta^* \ast \beta)(h_{(2)}) \\
			& = &  \varepsilon \circ t \left(\varepsilon (h_{(1)})\, e^\alpha\right) \varepsilon (h_{(2)}) \,\beta(1_{\mathcal{H}}) \\
			& = & \varepsilon (h_{(1)}) \,e^\alpha\,\varepsilon (h_{(2)}) \,\beta(1_{\mathcal{H}}) \\
			& = & \varepsilon (h)\, e^\alpha \,\beta(1_{\mathcal{H}}).			
		\end{eqnarray*}	
		The same result for $\mathcal{S}(\beta_{(1)})\ast \beta_{(2)} \ast \alpha_{(1)} \ast \mathcal{S}(\alpha_{(2)})(h)$.
		\vspace{10pt}
		
		Also, the pseudo antipode is unital. Indeed, for $h\in \mathcal{H}$,
		\begin{eqnarray*}
			\mathcal{S}(\varepsilon)(h) & = & (\varepsilon \circ t)^{-1} \circ \varepsilon \circ S(h)  \\
			&	=& \varepsilon \circ S(h)\\
			& = & \varepsilon \circ S\left(t(\varepsilon (h_{(1)}))\, h_{(2)}\right)\\
			& = & \varepsilon \left(S(h_{(2)})\, s(\varepsilon(h_{(1)}))\right) \\
			& = & \varepsilon \left(h_{(1)}S(h_{(2)})\right) \\
			& = & \varepsilon \circ t\circ \varepsilon (h) \\
			& = & \varepsilon (h) .
		\end{eqnarray*}
		
		Therefore, the algebra $\mathfrak{B}(\mathcal{H})$ is a unital quantum inverse semigroup.	  
	\end{proof}
	
	\begin{exmp}
		Let $H$ be a commutative Hopf algebra, considered as a Hopf algebroid over the field $\K$. The set of biretractions of $H$, which are global, coincides with the group of algebra morphisms between $H$ and $\K$, that is, the group $G(H^{\circ})$ of group-like elements of the finite dual Hopf algebra $H^{\circ}$.
	\end{exmp}
	
	\begin{exmp} \label{AA2}
		Let $A$ be a commutative Hopf algebra and consider the Hopf algebroid $\mathcal{H}=A\otimes A$, from \emph{Example \ref{AA}}. Let $M(A)$ be the set of multiplicative functions $\varphi: A\rightarrow A$, $E(A)\subseteq A$ the set of idempotent elements of $A$  and
		
		$$M(A)\times^b E(A)=\{ (\varphi, e) \in M(A)\times E(A) \textrm{ such that } \varphi|_{A e} : A e\longrightarrow A \varphi(e) \textrm{ is a bijection}\}.$$
		
		Consider the equivalence relation
		$$(\varphi, e) \sim (\psi, f) \,\,\Leftrightarrow \,\,e=f \textrm{ and } \varphi|_{Ae}=\psi|_{Ae}.$$
		
		Representing the class of an element $(\varphi, e)\in M(A)\times^b E(A)$ by $[\varphi,e],$ then the biretractions of $\mathcal{H}$ are classified by the set
		$$M(A)\ltimes E(A):=\left\{[\varphi,e] : \, (\varphi,e)\in M(A)\times^b E(A)\right\},$$
		which is a regular monoid with the multiplication $$[\varphi,e]\, [\psi,f]=[ \varphi\circ\psi, \psi^{-1}(e\psi(f))],$$
		unity $[\emph{\textrm{Id}}, 1_A]$ and $[\varphi,e]^\ast= [\varphi^{-1}, \varphi(e)]$, in which we are denoting by $\psi^{-1}$ the map $\left( \psi|_{Af} \right)^{-1}$ and by $\varphi^{-1}$ the map $\left( \varphi|_{Ae} \right)^{-1}$ 
		
		This multiplication is well-defined, because considering the function
		$$\varphi\circ\psi|_{A\psi^{-1}(e\psi(f))}: A\psi^{-1}(e\psi(f)) \longrightarrow A\varphi(e \psi(f)),$$
		we have that 
		\begin{itemize}
			\item $\varphi\circ\psi|_{A\psi^{-1}(e\psi(f))}$ is injective:
			
			\begin{align*}
				& 0 = \varphi\circ\psi (a\psi^{-1}(e\psi(f))) \\
				& 0= \varphi (\psi (a) e \psi (f)) \\
				\Rightarrow\,\,\, & 0 = \psi (a)\psi (f)e = \psi (a\psi^{-1}(e\psi(f)) = \psi (af\psi^{-1}(e\psi(f)) \\
				\Rightarrow\,\,\, & 0 = af\psi^{-1}(e\psi(f))= a\psi^{-1}(e\psi(f)).
			\end{align*}
			
			\item $\varphi\circ\psi|_{A\psi^{-1}(e\psi(f))}$ is surjective: given $a\,\varphi(e\psi(f))\in A\varphi(e \psi(f)),$ we have
			\vspace{7pt}
			\begin{align*}
				a\varphi(e\psi(f)) & = \varphi(\varphi^{-1} (a\varphi(e))\psi(f) )\\
				& = \varphi\circ\psi \circ \psi^{-1}(\varphi^{-1}(a\varphi(e))\psi(f)\\
				& = \varphi\circ\psi (\psi^{-1}(\varphi^{-1}(a\varphi(e)))\psi^{-1} (e\psi(f))) \in \varphi\circ\psi(A\psi^{-1}(e\psi(f))).
			\end{align*}
		\end{itemize}
		
		Given the element $[\varphi,e]\in M(A)\ltimes E(A),$ define
		\[\begin{array}{rccl} \alpha_{[\varphi,e]} : & A\otimes A & \rightarrow & A \\  \, & a\otimes b & \mapsto & \varphi (ae) \, b ,\end{array}
		\]
		which is a biretraction with $e^{\alpha_{[\varphi,e]}}=e$ and $\alpha_{[\varphi,e]}\circ t|_{Ae}=\varphi|_{Ae}.$
		
		The convolution product between two local biretractions $\alpha_{[\varphi,e]} , \alpha_{[\psi,f]}  \in \mathfrak{Brt}(A\otimes A ,A)$ is given by
		\begin{eqnarray*}
			\alpha_{[\varphi,e]}  \ast \alpha_{[\psi,f]}  (a\otimes b) & = & \alpha_{[\psi,f]}  \circ t \circ  \alpha_{[\varphi,e]}  (a\otimes 1_A )       \,\alpha_{[\psi,f]}  (1_A \otimes b) \\
			& = & \alpha_{[\psi,f]}  (\varphi (ae)\otimes 1_A )\, \psi (f )\, b \\
			& = & \psi (\varphi (ae)f) \,\psi (f )\, b \\
			& = & \psi (\varphi (ae)f)\,  b \\
			& = & \psi\circ \varphi (a\varphi^{-1}(f\varphi(e)))\, b\\
			& = & \alpha_{[\psi\circ\varphi, \varphi^{-1}(f\varphi (e))]}(a\otimes b)\\
			& = & \alpha_{[\psi,f]\, [\varphi,e]}(a\otimes b) 
		\end{eqnarray*}	
		for every $a,b\in A.$ Then there is an isomorphism of semigroups
		\[
		\begin{array}{rccl} \alpha : & (M(A)\ltimes E(A))^{op} & \rightarrow & \mathfrak{Brt}(A\otimes A ,A) \\
			& [\varphi,e]        & \mapsto     & \alpha_{[\varphi,e]} \end{array}  ,
		\]
		whose inverse is 
		\[
		\begin{array}{rccl} \varphi : & \mathfrak{Brt}(A\otimes A ,A) & \rightarrow &  (M(A)\ltimes E(A))^{op}  \\
			\,      & \alpha        & \mapsto     & [\alpha\circ t, e^\alpha] \end{array}  .
		\]
		
		Moreover, this is an isomorphism of regular monoids:
		\begin{itemize}
			\item $\alpha$ maps unity to unity: for every $a,b\in A,$
			$$   \alpha_{[Id_A,1_A]}(a\otimes b)  = id(a)b= ab=\varepsilon (a\otimes b).$$
			
			\item $\alpha$ maps pseudo-inverse to pseudo-inverse: for every $a,b\in A,$
			\begin{align*}
				\alpha^\ast_{[\varphi,e]}(a\otimes b) &= (\alpha_{[\varphi,e]}\circ t)^{-1}\circ \alpha_{[\varphi,e]} \circ S(a\otimes b)\\
				& = (\alpha_{[\varphi,e]}\circ t)^{-1}\circ \alpha_{[\varphi,e]} (b\otimes a)\\
				&= \varphi^{-1}(\varphi(be)a)\\
				& = b\,e\,\varphi^{-1}(a\varphi(e))\\
				& = \alpha_{[\varphi^{-1},\varphi(e)]}(a\otimes b)\\
				& = \alpha_{[\varphi,e]^\ast}(a\otimes b).
			\end{align*}
		\end{itemize}
	\end{exmp}

	\begin{exmp}
		Generalizing slightly the previous example, we can create local biretractions for the Hopf algebroid $\mathcal{H} =(A\otimes  A)[x,x^{-1}]$ from \emph{Example \ref{AALaurent}}, with $A$ commutative. Consider the set
		\begin{align*}
			(M(A)\times^b E(A))\times' A  =  \{(\varphi,e,p)\in M(A)\times E(A)\times A \,&\, \textrm{ such that } \varphi|_{Ae}:\, Ae\rightarrow A\varphi(e) \textrm{ is a } \\
			& \textrm{bijection and } \exists p'\in A :\, pp'=\varphi(e)\}.
		\end{align*}
		
		Observe that if $p',p''\in A$ both satisfy $pp'=\varphi(e)=pp'',$ then
		\begin{equation}\label{p''}
			p'\varphi(e)=p'pp''=p''pp'=p''\varphi(e).    
		\end{equation}

		Now, considering the equivalence relation
		$$(\varphi,e,p)\sim (\psi,f,q)\,\, \Leftrightarrow \,\, e=f,\,  \varphi|_{Ae}=\psi|_{Ae}\textrm{ and } p\varphi(e)= q\varphi(e) $$
		and representing by $[\varphi,e,p]$ the class of equivalent elements by this relation, we have that
		$$(M(A)\ltimes E(A))\ltimes A :=\left\{ [\varphi, e, p]:\, (\varphi,e,p) \in M(A)\times^b E(A)\times' A \right\}$$
		is a semigroup with the product
		$$[\varphi,e,p]\, [\psi,f,q] = [\varphi\circ\psi,\, \psi^{-1}( e\psi(f)),\,p\varphi(q)],$$
		unity $ [\emph{\textrm{Id}}_A,1_A,1_A]$ and $ [\varphi,e,p]^\ast= [\varphi^{-1},\varphi (e), \varphi^{-1}(p'\varphi(e))].$ The product is well-defined, because for a class $[\varphi,e,p] $ in $(M(A)\ltimes E(A))\ltimes A,$ we can take $(p\varphi(q))'=p'\varphi(q'):$
		\begin{align*}
			p\varphi(q)\, p'\varphi(q') & = pp'\varphi(qq')\\
			& = \varphi(e)\varphi(\psi(f))\\
			& = \varphi\circ\psi(\psi^{-1}(e\psi(f))).
		\end{align*}
		
		Then, given $[\varphi,e,p]\in (M(A)\ltimes E(A))\ltimes A,$ we can define for $n\in\mathbb{N},$
		\[
		\begin{array}{rccl} \alpha_{[\varphi,e,p]} : & (A\otimes A)[x,x^{-1}] & \rightarrow & A \\  \, & (a\otimes b)x^n  & \mapsto & \varphi (ae) b p^n\\
			& \,\,\,(a\otimes b)x^{-n} & \mapsto & \varphi(ae)b(p')^n,\end{array}
		\]
		which is also well-defined because of \emph{(\ref{p''})}.
		
		This map is a biretraction in $\mathcal{H}$ just like in the previous example and the convolution product between two local biretractions $\alpha_{[\varphi,e,p]} , \alpha_{[\psi,f,q]}  \in \mathfrak{Brt}((A\otimes A)[x,x^{-1}] ,A)$ is given by
		\begin{align*}
			\alpha_{[\varphi,e,p]}\ast \alpha_{[\psi,f,q]}((a\otimes b)x^n) & = \alpha_{[\psi,f,q]}\circ t \circ \alpha_{[\varphi,e,p]}((a\otimes 1_A)x^n) \,\alpha_{[\psi,f,q]}((1_A\otimes b)x^n)\\
			& = \psi(\varphi(ae)p^nf)\psi(f)bq^n\\
			& = \psi\circ\varphi (a\varphi^{-1}(f(\varphi(e)))) b\psi(p^n)q^n\\
			& = \alpha_{[\psi\circ\varphi,\varphi^{-1}(f\varphi(e)),q\psi(p)]} ((a\otimes b)x^n)\\
			& = \alpha_{[\psi,f,q][\varphi,e,p]}((a\otimes b)x^n)
		\end{align*}
		for every $(a\otimes b)x^n\in (A\otimes A)[x,x^{-1}]. $ Analogously, we have that
		$$ \alpha_{[\varphi,e,p]}\ast \alpha_{[\psi,f,q]}((a\otimes b)x^{-n})=\alpha_{[\psi,f,q][\varphi,e,p]}((a\otimes b)x^{-n}).$$
		
		Therefore, the map
		\[
		\begin{array}{rccl} \alpha : & ((M(A)\ltimes E(A))\ltimes A)^{op} & \rightarrow & \mathfrak{Brt}((A\otimes A)[x,x^{-1}] ,A) \\
			\,      & [\varphi,e,p]        & \mapsto     & \alpha_{[\varphi,e,p]} \end{array}  
		\]
		is an isomorphism of semigroups, whose inverse is given by
		\[
		\begin{array}{rccl} \varphi : & \mathfrak{Brt}((A\otimes A)[x,x^{-1}] ,A) & \rightarrow &  ((M(A)\ltimes E(A))\ltimes A)^{op}  \\
			\,      & \alpha        & \mapsto     & [\alpha\circ t,  e^\alpha,\alpha(x)] \end{array}  .
		\]
		
		In fact, for every $[\varphi,e,p]\in (M(A)\ltimes E(A))\ltimes A$ and $(a\otimes b)x^n\in (A\otimes A)[x,x^{-1}],$
		\begin{align*}
			\alpha_{[\alpha\circ t, e^\alpha,\alpha(x)]} ((a\otimes b)x^n) & = \alpha\circ t\left(a\,e^\alpha\right)b\alpha(x^n)\\
			& = \alpha\circ t(a)\alpha\circ s(b)\alpha(x^n)\\
			& = \alpha((a\otimes b)x^n)
		\end{align*}
		\vspace{-7pt}
		and
		\vspace{-7pt}
		\begin{align*}
			[\alpha_{[\varphi,e,p]}\circ t, e^{\alpha_{[\varphi,e,p]}},\alpha_{[\varphi,e, p]}(x)] & = [\varphi, e, \varphi(e)p]\\
			& = [\varphi, e, p].
		\end{align*}
		
		Moreover, this is an isomorphism of regular monoids, because $\alpha$ maps unity to unity
		$$   \alpha_{[\emph{\textrm{Id}}_A,1_A,1_A]}((a\otimes b)x^n)  = \emph{\textrm{Id}}_A (a)\,b\,(1_A)^n= ab=\varepsilon ((a\otimes b)x^n) .$$
		and also maps pseudo-inverse to pseudo-inverse:
		\begin{align*}
			\alpha^\ast_{[\varphi,e,p]}((a\otimes b)x^n) &= (\alpha_{[\varphi,e,p]}\circ t)^{-1}\circ \alpha_{[\varphi,e,p]} \circ S((a\otimes b)x^n)\\
			& = (\alpha_{[\varphi,e,p]}\circ t)^{-1}\circ \alpha_{[\varphi,e,p]} ((b\otimes a)x^{-n})\\
			&= \varphi^{-1}(\varphi(be)a(p')^n)\\
			& = be\,\varphi^{-1}(a(p')^n\varphi(e))\\
			& = \varphi^{-1}(a\,\varphi(e)) b \left( \varphi^{-1}(p'\varphi(e))\right)^n\\
			& = \alpha_{[\varphi^{-1},\varphi(e),\varphi^{-1}(p'\varphi(e))]}((a\otimes b)x^n)\\
			& = \alpha_{[\varphi,e,p]^\ast}((a\otimes b)x^n).
		\end{align*}

	\end{exmp}

	\subsection{Biretractions on the Hopf algebroid of representative functions of a discrete groupoid}

	\begin{prop}\label{map_bis_birret}
		Let $\G$ a groupoid and $\mathcal{H}$ the Hopf algebroid of representative functions of $\G$ from the section \emph{\ref{RepFun}}. The map $\alpha: \mathfrak{B}(\G)\rightarrow\mathfrak{Brt}(\mathcal{H}, A),$ $(u,X)\mapsto \alpha_{(u,X)}$ given by 
		\begin{equation}\label{alphamap}
			\alpha_{(u,X)}(\overline{\varphi \otimes_{T_\mathcal{E}}p})_x= \varphi(t\circ u(x))\left(\rho^{\mathcal{E}}_{u(x)}(p(x))\right) \llbracket x\in X\rrbracket ,
		\end{equation}
		for every $\overline{\varphi \otimes_{T_\mathcal{E}}p}\in \mathcal{H}$ and $x\in \mathcal{G}^{(0)},$ is a well-defined morphism of regular monoids. \end{prop}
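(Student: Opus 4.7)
The plan is to pass to a simpler description of $\alpha_{(u,X)}$ via the injective $(A\otimes_\K A)$-algebra morphism $\zeta:\mathcal{R}_\K(\G)\to Fun(\G,\K)$ of Remark \ref{zeta}. Since $s(u(x))=x$ for every $x\in X$, the defining formula rewrites as
$$\alpha_{(u,X)}(F)(x)=\zeta(F)(u(x))\,\llbracket x\in X\rrbracket,$$
and well-definedness on the quotient $\mathcal{R}_\K(\G)=\Gamma/\mathscr{J}_{\RepG}$ is inherited from that of $\zeta$. From this presentation, $\K$-linearity is clear and multiplicativity is immediate: $\zeta$ is a morphism of algebras and the cut-off $\llbracket\,\cdot\in X\rrbracket$ is idempotent, hence multiplicative, in its argument.

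Next I would verify the biretraction axioms. Using $\overline{s}(a)=\overline{1_A\otimes_{T_\mathcal{I}}a}$ and $\rho^\mathcal{I}=\mathrm{Id}$, a direct computation gives $\alpha_{(u,X)}\circ\overline{s}(a)(x)=a(x)\,\llbracket x\in X\rrbracket$ and $\alpha_{(u,X)}(1_\mathcal{H})(x)=\llbracket x\in X\rrbracket$, whence (BRT1). For (BRT2), take $e^{\alpha_{(u,X)}}=\llbracket\,\cdot\in t\circ u(X)\rrbracket$: since $\alpha_{(u,X)}\circ\overline{t}(a)(x)=a(t\circ u(x))\,\llbracket x\in X\rrbracket$, the restricted map $A\,e^{\alpha_{(u,X)}}\to A\,\alpha_{(u,X)}(1_\mathcal{H})$ is precisely pullback along the bijection $t\circ u:X\to t\circ u(X)$ guaranteed by item (ii) of Definition \ref{bisection}, hence a bijection. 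Preservation of the unit of $\mathfrak{B}(\G)$ is equally immediate: for the identity bisection $(i,\G^{(0)})$, property (1) of Remark \ref{zeta} yields $\alpha_{(i,\G^{(0)})}(F)(x)=\zeta(F)(i(x))=\varepsilon(F)(x)$.

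For multiplicativity of $\alpha$, given $(u,X),(v,Y)\in\mathfrak{B}(\G)$ with product $(uv,Z)$ where $Z=(t\circ v)^{-1}(t\circ v(Y)\cap X)$, I would expand $(\alpha_{(u,X)}\ast\alpha_{(v,Y)})(F)(x)$ via the convolution formula and the expressions above to obtain
$$\zeta(F_{(1)})\bigl(u(t\circ v(x))\bigr)\,\zeta(F_{(2)})(v(x))\,\llbracket t\circ v(x)\in X\rrbracket\,\llbracket x\in Y\rrbracket.$$
The indicator product equals $\llbracket x\in Z\rrbracket$ by the very definition of $Z$, and on $Z$ the arrows $u(t\circ v(x))$ and $v(x)$ are composable in $\G$ with product $(uv)(x)$; property (3) of Remark \ref{zeta} then collapses the two factors of $\zeta$ into $\zeta(F)((uv)(x))=\alpha_{(uv,Z)}(F)(x)$. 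Preservation of pseudo-inverses is treated similarly: writing $\alpha_{(u,X)}^{\ast}=(\alpha_{(u,X)}\circ\overline{t})^{-1}\circ\alpha_{(u,X)}\circ S$, property (2) of Remark \ref{zeta} converts $\zeta\circ S$ into pullback by inversion, while $(\alpha_{(u,X)}\circ\overline{t})^{-1}$ is pullback by $(t\circ u)^{-1}:t\circ u(X)\to X$; composing reproduces $\alpha_{(\overline{u},t\circ u(X))}=\alpha_{(u,X)^{\ast}}$. The main technical obstacle is the Sweedler bookkeeping in the multiplicativity step, especially matching the indicator product with $\llbracket\,\cdot\in Z\rrbracket$ and ensuring the composability of the two arrows inside the application of property (3); everything else is essentially a dictionary translation through $\zeta$.
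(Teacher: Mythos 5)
Your proposal is correct and follows essentially the same route as the paper: both rewrite $\alpha_{(u,X)}(F)_x=\zeta(F)(u(x))\llbracket x\in X\rrbracket$ and then lean on the properties of $\zeta$ from Remark \ref{zeta} for the unit, the pseudo-inverse, and the convolution computation, with $e^{\alpha_{(u,X)}}$ the indicator of $t\circ u(X)$. The only cosmetic difference is that you invoke property (3) of Remark \ref{zeta} abstractly for the multiplicativity step, whereas the paper re-derives it explicitly with the dual basis $\{e_i^*,e_i\}$ of $\Gamma(\mathcal{E})$.
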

	
	Before starting the proof of the proposition above, let us make two important remarks:
	\begin{itemize}
		\item Here, the brackets $\llbracket \underline{\quad} \rrbracket$, appearing in the expression (\ref{alphamap}), denote the Boolean value function, which is equal to $0$ if the sentence inside the brackets is false and it is equal to $1$ if the sentence is true. 
		\item Note here that the definition of the map $\alpha$ was inspired by the definition of the map $\zeta$ from \emph{Remark \ref{zeta}}.
	\end{itemize}
	
	\begin{proof}
		First, $\alpha$ can be written as
		$$\alpha_{(u,X)}(\overline{\varphi \otimes_{T_\mathcal{E}}p})_x= \zeta(\overline{\varphi \otimes_{T_\mathcal{E}}p})(u(x))\llbracket x\in X\rrbracket,$$
		hence each $\alpha_{(u,X)}$ is well-defined and multiplicative. Also, (BRT1) is valid, because
		$$(\alpha_{(u,X)}\circ \overline{s}(a))_x = \alpha_{(u,X)}(\overline{1_A\otimes_{T_\mathcal{I}}a})_x = a(x)\llbracket x\in X\rrbracket = a(x)\alpha_{(u,X)}(1_\mathcal{H})_x$$
		for every $x\in \G^{(0)}$ and $a\in A.$ To prove that $\alpha_{(u,X)}$ satisfies (BRT2)  for every bisection $(u,X)$ of $\G,$ remember that
		$(u,X)^\ast =(\overline{u},t\circ u(X)),$ with $\overline{u}(t\circ u(x))=u(x)^{-1}.$ Then,
		\begin{align*}
			\alpha_{(u,X)}\circ \overline{t}\left(\alpha_{(u,X)^\ast} (1_\mathcal{H})\right)_x & = \alpha_{(u,X)}\left(\overline{\alpha_{ (u,X)^\ast}(1_\mathcal{H})\otimes_{T_\mathcal{I}}1_A }\right)_x \\
			& = \alpha_{(u,X)^\ast} (1_\mathcal{H})_{t\circ u(x)}\llbracket x\in X\rrbracket\\
			& = \llbracket t\circ u(x)\in t\circ u(X)\rrbracket \llbracket x\in X\rrbracket\\
			& = \llbracket x\in X\rrbracket\\
			& = \alpha_{(u,X)}(1_\mathcal{H})_x
		\end{align*}
		for every $x\in\G^{(0)}.$  Moreover, the map
		$$\alpha_{(u,X)}\circ \overline{t}|_{A\, \alpha_{(u,X)^\ast} (1_\mathcal{H}) } : A\, \alpha_{(u,X)^\ast} (1_\mathcal{H}) \longrightarrow A\, \alpha_{(u,X)} (1_\mathcal{H}) $$
		is injective. In fact, for any $a\in A$ such that 
		$$ \alpha_{(u,X)}\circ \overline{t} \left(a\,  \alpha_{(u,X)^\ast}(1_\mathcal{H})\right) = 0,$$
		then 
		\begin{align}\label{injection}
			a(t\circ u(y))\llbracket y\in X\rrbracket & = a(t\circ u(y))\llbracket t\circ u(y)\in t\circ u(X)\rrbracket\llbracket y\in X\rrbracket \nonumber\\
			& = (a\,  \alpha_{(u,X)^\ast}(1_\mathcal{H}))_{t\circ u(y)}\llbracket y\in X\rrbracket \nonumber\\
			&=0
		\end{align}
		for every $y\in \G^{(0)}.$ Thus
		\begin{align*}
			\left( a\,  \alpha_{(u,X)^\ast}(1_\mathcal{H})\right)_x & = a(x)\llbracket x\in t\circ u(X)\rrbracket \\
			& = a(t\circ u((t\circ u)^{-1}(x))) \llbracket (t\circ u)^{-1}(x)\in X\rrbracket \llbracket x\in t\circ u(X)\rrbracket \\
			& \stackrel{(\ast)}{=} 0
		\end{align*}
		for every $x\in \mathcal{G}^{(0)},$ where we used the equation (\ref{injection}) in $(\ast)$ with $y=(t\circ u)^{-1}(x).$ Therefore $\alpha_{(u,X)}\circ \overline{t}|_{A\, \alpha_{(u,X)^\ast} (1_\mathcal{H}) }$ is injective.
		
		Now observe that
		\begin{align*}
			\alpha_{(u,X)}\,\circ &\,S (\overline{\varphi \otimes_{T_\mathcal{E}}p})_x \\
			& = \zeta\circ S (\overline{\varphi \otimes_{T_\mathcal{E}}p})(u(x))\,\llbracket x\in X\rrbracket \\
			& = \zeta(\overline{\varphi \otimes_{T_\mathcal{E}}p})((u(x))^{-1})\,\llbracket x\in X\rrbracket\\
			& = \varphi(t((u(x))^{-1}))\left( \rho^\mathcal{E}_{(u(x))^{-1}} \left(p(s( u(x))^{-1}\right)\right) \,\llbracket x\in X\rrbracket \\
			& = \varphi(t\circ\overline{u}(t\circ u(x)))\left( \rho^\mathcal{E}_{\overline{u}(t\circ u(x))} \left(p(t\circ u(x)\right)\right) \,\llbracket t\circ u(x)\in t\circ u(X)\rrbracket\\
			& = \alpha_{(u,X)^\ast}\left( \overline{\varphi \otimes_{T_\mathcal{E}}p}\right)_{t\circ u(x)}.
		\end{align*}
		Thus $\alpha_{(u,X)}\circ \overline{t}|_{A\, \alpha_{(u,X)^\ast} (1_\mathcal{H}) } : A\, \alpha_{(u,X)^\ast} (1_\mathcal{H}) \longrightarrow A\, \alpha_{(u,X)} (1_\mathcal{H})$ is surjective, because for every $a\in A$ and every $x\in \mathcal{G}^{(0)},$
		\begin{align*}
			\left(a\, \alpha_{(u,X)}(1_\mathcal{H})\right)_x & = \left(\alpha_{(u,X)}\circ \overline{s}(a)\right)_x\\
			& = \left(\alpha_{(u,X)}\circ S\circ \overline{t}(a)\right)_x\\
			& = \alpha_{(u,X)^\ast}(\overline{t}(a))_{t\circ u(x)} \\
			& = \alpha_{(u,X)}\circ \overline{t} \left(\alpha_{(u,X)^\ast} (\overline{t}(a))\right)_x.
		\end{align*}
		
		Therefore, $e^{\alpha_{(u,X)}}=\alpha_{(u,X)^\ast}(1_\mathcal{H})$ satisfies (BRT2) and $\alpha_{(u,X)}$ is a local biretraction.

		Now, for $(u,X)$ and $(v,Y)$ local bisections of $\G$ with $(u,X)\cdot (v,Y)=(uv,Z),$ 
		
		\begin{small}
			\begin{align*}
				&  \alpha_{(u,X)} \ast  \alpha_{(v,Y)}(\overline{\varphi\otimes_{T_\mathcal{E}}p})_x \\
				& =\sum_{i=1}^n \alpha_{(v,Y)}\circ \overline{t}\circ\alpha_{(u,X)} (\overline{\varphi\otimes_{T_\mathcal{E}}e_i})_x\,\alpha_{(v,Y)}(\overline{e_i^*\otimes_{T_\mathcal{E}}p})_x\\
				& =\sum_{i=1}^n \alpha_{(v,Y)}(\overline{\alpha_{(u,X)}(\overline{\varphi\otimes_{T_\mathcal{E}}e_i})\otimes_{T_\mathcal{I}}1_A})_x \,\alpha_{(v,Y)}(\overline{e_i^*\otimes_{T_\mathcal{E}}p})_x\\
				& =\sum_{i=1}^n \alpha_{(u,X)}(\overline{\varphi\otimes_{T_\mathcal{E}}e_i})_{t\circ v(x)} \left(\rho^\mathcal{I}_{v(x)}(1_A(x))\right) \llbracket x\in Y\rrbracket \,\alpha_{(v,Y)}(\overline{e_i^*\otimes_{T_\mathcal{E}}p})_x\\
				& =\sum_{i=1}^n \varphi(t\circ u\circ t\circ v(x)) \left(\rho^{\mathcal{E}}_{u\circ t\circ v(x)} (e_i(t\circ v(x)))\right)  e_i^*(t\circ v(x)) \left( \rho^\mathcal{E}_{v(x)}(p(x))\right) \llbracket x\in Y\rrbracket\llbracket t\circ v(x)\in X\rrbracket\\
				& =\sum_{i=1}^n \varphi(t\circ uv(x)) \left( e^*_i\left(\rho^{\varepsilon}_u\right) e_i\right) (t\circ v(x))\left( \rho^\mathcal{E}_{v(x)}(p(x))\right) \llbracket x\in Z\rrbracket\\
				& = \varphi(t\circ uv(x)) \rho^{\mathcal{E}}_{u\circ t \circ v(x)}\rho^\mathcal{E}_{v(x)}(p(x))\llbracket x\in Z\rrbracket \\
				& = \varphi (t\circ uv(x))\left( \rho^\mathcal{E}_{uv(x)}(p(x))\right)\llbracket x\in Z\rrbracket \\
				& =\alpha_{(uv,Z)}(\overline{\varphi\otimes_{T_\mathcal{E}}p})_x
		\end{align*}\end{small}
		for every $\overline{\varphi\otimes_{T_\mathcal{E}} p} \in\mathcal{H}$ and every $x\in \mathcal{G}^{(0)}.$ Consequently, $\alpha$ is a morphism of semigroups. 
		Finally, with $ i:\mathcal{G}^{(0)} \rightarrow\mathcal{G}$ being the inclusion map of the groupoid, we have that
		\begin{align*}
			\alpha_{\left( i, \G^{(0)}\right)}(\overline{\varphi \otimes_{T_\varepsilon}p})_x & = \varphi(t\circ i(x)) \left( \rho^\varepsilon_{ i(x)}(p(x))\right)\llbracket x\in \G^{(0)}\rrbracket\\
			& = \varphi(x)(p(x))\\
			& = \varepsilon(\overline{\varphi \otimes_{T_\varepsilon}p})_x
		\end{align*}
		and
		\begin{align*}
			\alpha^\ast_{\left(u, X\right)}(\overline{\varphi \otimes_{T_\varepsilon}p})_x & = (\alpha_{\left(u, X\right)}\circ \overline{t})^{ -1}\circ \alpha_{\left(u, X\right)}\circ S (\overline{\varphi \otimes_{T_\varepsilon}p})_x\\
			& = (\alpha_{\left(u, X\right)}\circ \overline{t})^{ -1}\circ \alpha_{(u,X)^\ast}\left(\overline{\varphi \otimes_{T_\mathcal{E}}p}\right)_{t\circ u(x)}\\
			& = \alpha_{(u,X)^\ast}\left(\overline{\varphi \otimes_{T_\mathcal{E}}p}\right)_x
		\end{align*}
		for every $\overline{\varphi\otimes_{T_\mathcal{E}}p}\in\mathcal{H}$ and $x\in\G^{(0)}.$ Therefore, $\alpha$ is a morphism of regular monoids.

	\end{proof}

	\begin{prop}\label{finitegroupoid}
		Let $\G$ be a finite and transitive groupoid and $\mathcal{H}=\mathcal{R}_\K(\G)$ the Hopf algebroid of representative functions of $\G.$ Then there exists an isomorphism of regular monoids between the bisections $\mathcal{B}(\G)$ of $\G$ and the set of the biretractions  $\mathfrak{B}(\mathcal{H})$ of $\mathcal{H}.$
	\end{prop}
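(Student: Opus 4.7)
The plan is to use Proposition~\ref{map_bis_birret}, which already delivers a morphism of regular monoids $\alpha : \mathfrak{B}(\G) \to \mathfrak{Brt}(\mathcal{H}, A)$ sending $(u,X) \mapsto \alpha_{(u,X)}$, and to show that this map is a bijection under the hypotheses. I would work throughout via the isomorphism of Hopf algebroids $\mathcal{H} \cong A \otimes_\K R(G) \otimes_\K A$ from Remark~\ref{transitivo}, after fixing a base point $x_0 \in \G^{(0)}$ and setting $G = G_{x_0}$ and $A = \operatorname{Fun}(\G^{(0)},\K)$, so that $\G$ itself is identified with $\G^{(0)} \times G \times \G^{(0)}$ as in Example~\ref{XGX}. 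Two uses of finiteness are crucial: since $G$ is finite, $R(G) = \K^G$ and $\K$-algebra morphisms $R(G) \to \K$ correspond bijectively to elements of $G$; since $\G^{(0)}$ is finite, $\K$-algebra morphisms $A \to \K$ correspond bijectively to points of $\G^{(0)}$.

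For injectivity, I would suppose $\alpha_{(u,X)} = \alpha_{(u',X')}$. Evaluating at $1_\mathcal{H}$ yields $\mathbf{1}_X = \mathbf{1}_{X'}$ in $A$, hence $X = X'$; the formula of Proposition~\ref{map_bis_birret} then rewrites as $\zeta(F)(u(x)) = \zeta(F)(u'(x))$ for every $x \in X$ and every $F \in \mathcal{H}$. Because $\G$ is finite and transitive, the map $\zeta$ of Remark~\ref{zeta} is a bijection from $\mathcal{H}$ onto $\operatorname{Fun}(\G,\K)$, so its image separates points of $\G$ and we conclude $u = u'$ on $X$.

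For surjectivity, given any biretraction $\beta : \mathcal{H} \to A$, I would decompose it along the tensor factorization. Writing $p = \beta(1_\mathcal{H})$ and defining $\beta_t(a) = \beta(a \otimes 1 \otimes 1_A)$ and $\beta_f(f) = \beta(1_A \otimes f \otimes 1_A)$, multiplicativity of $\beta$ combined with \emph{(BRT1)} gives the master formula $\beta(a \otimes f \otimes b) = \beta_t(a) \cdot \beta_f(f) \cdot b$ (pointwise in $A$), so $\beta$ is completely determined by the pair $(\beta_t, \beta_f)$ of multiplicative linear maps landing in the ideal $Ap$, with $\beta_t(1_A) = \beta_f(1) = p$. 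Setting $X = \operatorname{supp}(p) \subseteq \G^{(0)}$ and evaluating at each $z \in X$, the map $\beta_f$ restricts to a unital character $R(G) \to \K$, which determines a unique element $g_z \in G$; similarly $\beta_t$ restricts to a unital character $A \to \K$, determining a point $\tau(z) \in \G^{(0)}$. Axiom \emph{(BRT2)}, applied to the idempotent $e^\beta$, unpacks to the statement that $\tau$ is a bijection from $X$ onto $\operatorname{supp}(e^\beta)$; in particular $\tau$ is injective. Defining $u : X \to \G \cong \G^{(0)} \times G \times \G^{(0)}$ by $u(z) = (\tau(z), g_z, z)$ then produces a local bisection, since $s \circ u = \operatorname{Id}_X$ and $t \circ u = \tau$ is injective; substituting into the formula of Proposition~\ref{map_bis_birret} and comparing against the master formula confirms $\alpha_{(u,X)} = \beta$.

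The main obstacle is the surjectivity step, specifically the clean extraction of the pair $(\tau, z \mapsto g_z)$ from an abstract biretraction $\beta$: this is where both uses of finiteness become essential, and where transitivity pays off by converting the analysis into the independent study of the two algebra maps $\beta_t$ and $\beta_f$ via the tensor factorization of Remark~\ref{transitivo}.
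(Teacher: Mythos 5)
Your proposal is correct and follows essentially the same route as the paper's proof: both use the morphism $\alpha$ of Proposition~\ref{map_bis_birret}, prove injectivity by evaluating at $1_{\mathcal{H}}$ and then separating points of $\lambda(X)$ and of $G$ with suitable test functions, and prove surjectivity by factoring a biretraction $\beta$ through the three tensor factors of $A\otimes_\K R(G)\otimes_\K A$ and extracting the bijection on $\G^{(0)}$ and the map into $G$ from the restrictions of $\beta$ to $A\otimes 1\otimes 1$ and $1\otimes R(G)\otimes 1$. The only difference is cosmetic: you phrase the extraction in terms of characters of the finite-dimensional commutative algebras $A$ and $R(G)=\operatorname{Fun}(G,\K)$, while the paper computes explicitly with the idempotent bases $\chi_x$ and $p_h$.
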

	\begin{proof}
		We saw on Remark \ref{transitivo} that the groupoid $\G$ can be seen as the groupoid $\G^{(0)}\times G\times\G^{(0)},$ where $G$ is a group, and that $\mathcal{H}\cong A\otimes_\K R(G)\otimes_\K A,$ with $R(G)$ being the Hopf algebra of representative functions of the group $G.$ Recall from Example \ref{XGX} the Hopf algebroid structure of $A\otimes_\K R(G)\otimes_\K A$ given by the expressions (\ref{structure}). Besides that, if $(u,X)\in \mathcal{B}(\G),$ we can write
		\begin{center}
			\begin{tabular}{rl}
				$u:X$ & $\longrightarrow \G^{(0)}\times G\times \G^{(0)}$  \\
				$x$ & $\longmapsto (\lambda(x),\phi(x),x)$ 
			\end{tabular}
		\end{center}
		with $\phi:X\rightarrow\G$ and $\lambda:X\rightarrow \lambda(X)\subseteq \G^{(0)}$ a bijection,  because $x=s\circ u(x)$ for all $x\in X$ and $t\circ u=\lambda: X\rightarrow \lambda(X)$ is a bijection. Again from the Example \ref{XGX}, equation (\ref{afb}),  the morphism $\alpha$ from Proposition \ref{map_bis_birret}
		can be written for $\G$ as
		$$ \alpha_{(u,X)}(a\otimes f\otimes b)_x = a(\lambda(x))\, f(\phi(x))\, b(x)\,\llbracket x\in X\rrbracket$$
		for every $a\otimes f\otimes b\in A\otimes_\K R(G)\otimes_\K A$ and $x\in\G^{(0)}.$ We want to prove that the morphism $\alpha:\mathcal{B}(\G)\rightarrow \mathcal{B}rt(A\otimes_\K R(G)\otimes_\K A, A),$ $(u,X)\mapsto \alpha_{(u,X)}$ is bijective.
		
		First, suppose that $(u,X)$ and $(v,Y)$ are both bisections of $\G$ with $$u(x)=(\lambda(x),\phi(x),x)\qquad v(y)=(\lambda'(y),\phi'(y),y)$$
		and $\alpha_{(u,X)}=\alpha_{(v,Y)}.$ Then
		$$\llbracket x\in X\rrbracket = \alpha_{(u,X)}(1_\mathcal{H})(x)=\alpha_{(v,Y)}(1_\mathcal{H})(x)= \llbracket x\in Y\rrbracket,$$
		which implies that $X=Y.$ Also, since $\G^{(0)}$ is finite, we can take $a\in A$ such that $a|_{\lambda(X)}$ is bijective. Thus for $x\in X,$
		$$a(\lambda(x))=\alpha_{(u,X)}(a\otimes 1_{R(G)}\otimes 1_A)_x= \alpha_{(v,Y)}(a\otimes 1_{R(G)}\otimes 1_A)_x = a(\lambda'(x))$$
		implies that $\lambda=\lambda'.$ Similarly, we have that $\phi=\phi'$ and, consequently, $(u,X)=(v,Y).$ Therefore, $\alpha$ is injective.
		
		On the other hand, let $\beta:A\otimes_\K R(G)\otimes_\K A\rightarrow A$ be a local biretraction. Then by definintion,
		\begin{align*}
			& \beta(1_A\otimes 1_{R(G)}\otimes a)=\beta\circ\overline{s}(a) = a\,\beta(1_\mathcal{H}); \\
			& \beta\circ \overline{t}(a)=\beta(a\otimes 1_{R(G)}\otimes 1_A)
		\end{align*}
		for every $a\in A$ and there exists $e^\beta\in A$ such that $\beta\circ \overline{t}(e^\beta)=\beta( 1_\mathcal{H})$ with 
		$$\beta\circ \overline{t}|_{A\, e^\beta}: A\, e^\beta\longrightarrow A\,\beta(1_\mathcal{H})$$
		being a bijection. Since $\beta(1_\mathcal{H})$ and $e^\beta$ are idempotents, we have that $\beta(1_\mathcal{H})=\chi_X$ and $e^\beta=\chi_Y$ for some $X,Y\subseteq \G^{(0)}.$ Denoting $\chi_x:=\chi_{\{x\}},$ we have that
		\begin{equation}\label{b}
			\beta(1_A\otimes 1_{R(G)}\otimes a)_x=a(x)\,\beta(1_\mathcal{H})_x=a(x)\llbracket x\in X\rrbracket.
		\end{equation}
		Also,
		\begin{align*}
			& \chi_X=\beta(1_\mathcal{H})=\sum_{x\in\G^{(0)}} \beta(\chi_x\otimes 1_{R(G)}\otimes 1_A) = \sum_{x\in\G^{(0)}}\beta\circ\overline{t}(\chi_x)\\
			& \chi_X= \beta(1_\mathcal{H})=\beta\circ\overline{t}(\chi_Y)= \sum_{x\in Y} \beta(\chi_x\otimes 1_{R(G)}\otimes 1_A) = \sum_{x\in Y}\beta\circ\overline{t}(\chi_x)
		\end{align*}
		and if $x\neq y,$ then $\beta\circ \overline{t}(\chi_x)\,\beta\circ\overline{t}(\chi_y)= \beta\circ\overline{t}(\chi_x\chi_y)=0.$ Thus $\beta\circ\overline{t}(\chi_x)=0$ for all $x\in\G^{(0)}\setminus Y$ and there exists a bijection $\lambda:X\rightarrow Y,$ $x\mapsto \lambda(x)$ such that
		$$\beta\circ\overline{t}(\chi_{\lambda(x)})=\chi_x.$$ Hence for every $a\in A$ and $x\in\G^{(0)},$
		\begin{align}\label{a}
			\beta(a\otimes 1_{R(G)}\otimes 1_A)_x & = \sum_{y\in\G^{(0)}}a(y)\,\beta(\chi_y  \otimes 1_{R(G)}\otimes 1_A)_x \nonumber\\
			&=\sum_{y\in Y}a(y)\,\beta\circ\overline{t}(\chi_y)_x\nonumber\\
			& =a(\lambda(x))\llbracket x\in X\rrbracket. 
		\end{align}
		
		Finally, since $\G$ is transitive and finite, $R(G)=Fun(G,\K)$ \cite[Theorem III.1.5]{Simon}, hence every $f\in R(G)$ can be written as 
		$$f(g)=\sum_{h\in G} f(h)\, p_h,$$
		where $p_h(g)=\llbracket g=h\rrbracket$ for all $g\in G.$ Then
		$$\chi_X=\beta(1_\mathcal{H})=\sum_{g\in G} \beta(1_A\otimes p_g\otimes 1_A)$$
		with $\beta(1_A\otimes p_g\otimes 1_A)\, \beta(1_A\otimes p_h\otimes 1_A)=0$ whenever $g\neq h.$ Thus we can define a map $\phi: X\rightarrow G$ that takes each $x\in X$ to the unique $g=\phi(x)\in G$ such that $\beta(1_A\otimes p_{\phi(x)}\otimes 1_A)_x=1.$ Therefore,
		\begin{align}\label{f}
			\beta(1_A\otimes f\otimes 1_A)_x & = \sum_{g\in G} f(g)\,\beta(1_A\otimes p_g\otimes 1_A)_x\nonumber\\
			& = f(\phi(x))\llbracket x\in X\rrbracket.
		\end{align}
		
		And using the expressions (\ref{b}), (\ref{a}) and (\ref{f}), $\beta$ can be written for every $a\otimes f\otimes b$ in $A\otimes_\K R(G)\otimes_\K A,$ as
		\begin{align*}
			\beta(a\otimes f\otimes b)_x= &     \beta(a\otimes 1_{R(G)}\otimes 1_A)_x\,\beta(1_A\otimes f\otimes 1_A)_x \, \beta(1_A\otimes 1_{R(G)}\otimes b)_x\\
			& = a(\lambda(x))\, f(\phi(x))\, b(x)\, \llbracket x\in X\rrbracket\\
			& = \alpha_{(u,X)}(a\otimes f\otimes b)_x
		\end{align*}
		with $u:X\rightarrow Y\subseteq\G^{(0)},$ $x\mapsto (\lambda(x),\phi(x),x).$ Hence $\beta=\alpha_{(u,X)}$ and $\alpha$ is surjective.
	\end{proof}

	\begin{rmk}
		As a particular case from the finite and transitive groupoids, take the groupoid $\G=X\times X,$ with $X$ being a finite set. Thus a bisection $u:Y\subseteq X\rightarrow X$ of $\G$ can be written for an element $y\in Y$ as
		$$u(y)=(\lambda(y),y),$$
		where $\lambda:Y\rightarrow \lambda(Y)$ is a bijection, that is, any bisection of $\G$ is determined by a subset $Y\subseteq X$ and a bijection $\lambda: Y\rightarrow \lambda (Y)\subseteq X.$ From the Proposition \ref{finitegroupoid}, the pair $(\lambda, Y)$ also determines the biretractions for the Hopf algebroid of the representative functions of $\G.$ 
		
		On the other hand, from Example \ref{cartesiano}, the representative functions of $\G$ are given by $\mathcal{R}_\K(\G)\cong A\otimes_\K A,$ where $A=Fun(X,\K).$ From Example \ref{AA2}, a biretraction for $A\otimes_\K A$ with $A=Fun(X,\K)$ is determined by a pair $[\varphi,e]$ such that $\varphi:A\rightarrow A$ is multiplicative, $e^2=e\in A$ and $\varphi|_{Ae}:A\, e\rightarrow A\varphi(e)$ is a bijection. 
		
		These two characterizations of the biretractions are the same, because since $e$ and $\varphi(e)$ are idempotents in $A,$ there exist $Z,Y\subseteq X$ such that $e=\chi_Z$ and $\varphi(e)=\chi_Y.$ And since $X$ is finite and $\varphi$ is multiplicative, there exists a bijection $\lambda: Y\rightarrow Z$ such that for each $y\in Y,$ $\varphi(\chi_{\lambda(y)})=\chi_y.$ Therefore $[\varphi,e]$ is also determined by a subset $y\subseteq X$ and a bijection $\lambda:Y\rightarrow Z\subseteq X.$
	\end{rmk}
	
	\begin{rmk}
		A natural question would be about the relationship between bisections of algebraic groupoids (groupoid schemes) and biretractions of commutative Hopf algebroids. Given a commutative Hopf algebroid $(\mathcal{H} , A)$, consider the associated groupoid scheme 
		\[
		\left( \mathcal{G} =\text{Hom}_{Alg} (\mathcal{H}, \underline{\quad} ) \;, \; \mathcal{G}^{(0)} =\text{Hom}_{Alg} (A,\underline{\quad} ) \right)
		\] 
		A local bisection on this groupoid scheme is a pair $(v,X)$ in which $X\subseteq \mathcal{G}^{(0)}$ is an affine scheme and $v:X\Rightarrow \mathcal{G}$ is a natural transformation. The functor $X:\underline{Alg}_{\Bbbk} \rightarrow \underline{Set}$ would be represented by a quotient algebra $A/I$ for a given ideal $I\trianglelefteq A$ and, associated to the natural transformation $v$, there is a morphism of algebras $\alpha_v :\mathcal{H}\rightarrow A/I$ satisfying some conditions corresponding to the axioms (i) and (ii) of Definition \ref{bisection}. In order to deal with this problem, we need another approach for biretractions on a commutative Hopf algebroid $\mathcal{H}$, associating them to ideals of the base algebra $A$ instead of idempotent elements of $A$, which is an approach slightly more general than that used here.
		
		Another deeper question is whether one can define an inverse semigroup scheme $\Sigma$ associated to the set of bisections of a groupoid scheme $(\mathcal{G}, \mathcal{G}^{(0)})$ and verify the structure of QISG of the commutative algebra $H_{\Sigma}$ which represents it. These and other questions related to commutative Hopf algebroids are treated in a work in progress.
	\end{rmk}

	\subsection{Biretractions over noncommutative Hopf algebroids with commutative base algebras}

	We can go one step further and work with a noncommutative Hopf algebroid over a commutative algebra. In this case we have only one base algebra, which is commutative, but we still have two different structures of a left-bialgebroid and of a right bialgebroid. The definition of a biretraction for this structure should be a generalization of the definition for commutative Hopf algebroids.

	Let's consider a Hopf algebroid $\mathcal{H}$ over a commutative algebra $A$ such that $s_l=t_r=t$ and $s_r=t_l=s.$ In this case we can use the exact same definition of biretraction that we used in the commutative case: a biretraction for $\mathcal{H}$ is a multiplicative linear map $\alpha:\mathcal{H} \rightarrow A$ satisfying
	\begin{itemize}
		\item[(BRT1)] $\alpha \circ s (a) =a\,\alpha (1_{\mathcal{H}})$ for every $a\in A.$
		\item[(BRT2)] There exists $e^\alpha\in A$ such that $\alpha\circ t(e^\alpha)=\alpha(1_\mathcal{H})$ and
		$$\alpha \circ t|_{A\, e^\alpha} : A\, e^\alpha \longrightarrow A\, \alpha(1_\mathcal{H})$$ is a bijection.
	\end{itemize} 
	Denote the set of local biretractions of $\mathcal{H}$ by $\mathfrak{Brt} (\mathcal{H}, A).$
	
	\begin{rmk}
		Exactly like in the commutative case, we have that for a biretraction $\alpha:\mathcal{H}\rightarrow A,$ $\alpha(1_\mathcal{H})$ and $e^\alpha$ are idempotent elements of $A$ and $e^\alpha$ satisfying \emph{(BRT2)} is  also unique.
	\end{rmk}
	
	\begin{rmk}
		Because $A$ is commutative and $\alpha$ is multiplicative, we have that $\alpha(hk)=\alpha(kh)$ for every $h,k\in \mathcal{H}.$
	\end{rmk}
	
	\begin{rmk}
		Being $\mathcal{H}$ a Hopf algebroid over two algebras $A$ and $\overline{A},$ we have
		\begin{align*}
			\varepsilon_l\circ S(h) & = \varepsilon_l\circ S(t_l\circ \varepsilon_l(h_{(2)})\,h_{(1)})\\
			& = \varepsilon_l (S(h_{(1)})\, s_l\circ\varepsilon_l(h_{(2)}))\\
			& = \varepsilon_l (S(h_{(1)})\, h_{(2)})\\
			& = \varepsilon_l\circ s_r\circ \varepsilon_r(h)
		\end{align*}
		for all $h\in\mathcal{H}.$ Similarly, we have $\varepsilon_r\circ S=\varepsilon_r\circ s_l\circ \varepsilon_l.$ Therefore, if $\mathcal{H}$ is a Hopf algebroid with $A=\overline{A}$ and $s_l=t_r=t$ and $s_r=t_l=s$ then we have $\varepsilon_l\circ S=\varepsilon_r$ and $\varepsilon_r\circ S=\varepsilon_l.$
	\end{rmk}
	
	\begin{rmk}
		The counits $\varepsilon_l$ and $\varepsilon_r$ are not always biretractions, because they are not  ne- \newline cesssarily multiplicative functions, but given a biretraction $\alpha$ and using the notation $\Delta_r(h)=h^{(1)}\otimes_A h^{(2)}$ we have that
		\begin{align*}
			e^\alpha\varepsilon_l(h) & = (\alpha\circ t)^{-1} \alpha (t \circ \varepsilon_l)(h)\\
			& = (\alpha\circ t)^{-1}\alpha (h^{(1)}\, S(h^{(2)}))
		\end{align*}
		for every $h\in\mathcal{H}.$	Then $e^\alpha\varepsilon_l$ is a biretraction with $e^{e^\alpha\varepsilon_l}=e^\alpha.$ And using the identity $\varepsilon_r=\varepsilon_l\circ S,$ we have that $e^\alpha\,\varepsilon_r$ is also a biretraction. More than that, for every $h\in \mathcal{H},$
		$$\alpha(1_\mathcal{H})\,\varepsilon_r(h)=\alpha\circ (s\circ \varepsilon_r)(h)=\alpha(S(h_{(1)})\, h_{(2)}),$$
		which is also multiplicative. Then $\alpha(1_\mathcal{H})\,\varepsilon_r$ is a biretraction with $e^{\alpha(1_\mathcal{H})\,\varepsilon_r}=\alpha(1_\mathcal{H})$ and using $\varepsilon_l=\varepsilon_r\circ S,$ so is $\alpha(1_\mathcal{H})\,\varepsilon_l.$

	\end{rmk}
	
	\begin{thm}\label{regular_semigroup}
		Let $\mathcal{H}$ be a Hopf algebroid  over a commutative algebra $A$ such that $s_l=t_r=t,$ $s_r=t_l=s.$ Then the set $\mathfrak{Brt} (\mathcal{H}, A)$ of local biretractions of $\mathcal{H}$ is a regular semigroup.
	\end{thm}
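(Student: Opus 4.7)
The plan is to mimic the proof of Theorem~\ref{theorem.Brt(H,A).is.a.regular.monoid} nearly verbatim, replacing the (single) comultiplication by the left comultiplication $\Delta_l$ throughout and using the antipode $S$ to define the pseudo-inverse. The single loss relative to the commutative case is the existence of a unit: in general $\varepsilon_l$ fails to be multiplicative and so is not a biretraction, which is exactly why the statement is now a regular semigroup rather than a monoid.

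For $\alpha,\beta\in\mathfrak{Brt}(\mathcal{H},A)$, I would define
\[
(\alpha\ast\beta)(h)\;=\;\beta\!\left(t(\alpha(h_{(1)}))\,h_{(2)}\right),\qquad \alpha^{*}\;=\;(\alpha\circ t)^{-1}\circ\alpha\circ S,
\]
where $\Delta_l(h)=h_{(1)}\otimes_A h_{(2)}$. The observation that replaces the commutativity of $\mathcal{H}$ used in the commutative-case proof is the \emph{cyclic identity}
\[
\beta(xy)\;=\;\beta(x)\beta(y)\;=\;\beta(y)\beta(x)\;=\;\beta(yx),
\]
which is automatic because $\beta$ is multiplicative with values in the commutative algebra $A$. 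This lets one freely extract and reinsert factors inside the argument of $\beta$, and it makes the well-definedness of $\alpha\ast\beta$ on $\mathcal{H}\otimes_A\mathcal{H}$ (one checks it against the bimodule relation $s(a)h\otimes k=h\otimes t(a)k$, reducing it to $\alpha(h)=\alpha(h)\alpha(1_{\mathcal{H}})$), its multiplicativity, and axioms (BRT1)--(BRT2) (with $e^{\alpha\ast\beta}=(\alpha\circ t)^{-1}(e^{\beta}\alpha(1_{\mathcal{H}}))$) all follow by essentially the same computations as in the commutative case. Associativity then follows from coassociativity of $\Delta_l$ together with the cyclic property.

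That $\alpha^{*}$ is again a biretraction uses the anti-multiplicativity of $S$ and (HA4), which in the present setup ($s_l=t_r=t$ and $s_r=t_l=s$) specializes to $S\circ s=t$ and $S\circ t=s$. The regular-semigroup identities $\alpha\ast\alpha^{*}\ast\alpha=\alpha$ and $\alpha^{*}\ast\alpha\ast\alpha^{*}=\alpha^{*}$ are then verified by the same telescoping computation as in the commutative case, but one must now invoke both antipode formulas supplied by (HA5), namely $S(h_{(1)})h_{(2)}=s(\varepsilon_r(h))$ using $\Delta_l$ and $h^{(1)}S(h^{(2)})=t(\varepsilon_l(h))$ using $\Delta_r$, combined with the compatibility (HA3) between the two coproducts.

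The main obstacle I expect is precisely this last ingredient: the identity $h_{(1)}S(h_{(2)})h_{(3)}=h$ used freely in the commutative-case proof is not immediately available, because the ``second'' antipode axiom of (HA5) is formulated with respect to $\Delta_r$ rather than $\Delta_l$. Recovering the analogous telescoping requires switching between $\Delta_l$ and $\Delta_r$ via (HA3) at the appropriate moment, and this is the only step where the full Hopf-algebroid structure (beyond the left bialgebroid together with $S$) genuinely enters. Once that identity is in hand, the remaining verifications are formal bookkeeping and parallel the commutative-case computations line for line.
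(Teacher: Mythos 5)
Your setup of the convolution product, the formula $e^{\alpha\ast\beta}=(\alpha\circ t)^{-1}\bigl(e^{\beta}\,\alpha(1_{\mathcal{H}})\bigr)$, the use of the cyclic identity $\beta(xy)=\beta(yx)$ as the surrogate for commutativity of $\mathcal{H}$, and the observation that the counits need no longer be biretractions all match the paper. The genuine gap is the pseudo-inverse. You keep the commutative-case formula $\alpha^{*}=(\alpha\circ t)^{-1}\circ\alpha\circ S$ and expect that a judicious use of \emph{(HA3)} will restore the telescoping; the paper instead \emph{changes the definition} to
\[
\alpha^{*}(h)=(\alpha\circ t)^{-1}\bigl(\varepsilon_l(h^{(1)})\,\alpha(S(h^{(2)}))\bigr),\qquad \Delta_r(h)=h^{(1)}\otimes_A h^{(2)},
\]
and this modification is not cosmetic. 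Writing $S(h)=s(\varepsilon_r(h^{(1)}))\,S(h^{(2)})$ shows that your formula equals $(\alpha\circ t)^{-1}\bigl(\varepsilon_r(h^{(1)})\,\alpha(S(h^{(2)}))\bigr)$, so the two candidates differ exactly by $\varepsilon_r$ versus $\varepsilon_l$ on the first right-leg, and these counits genuinely differ for noncommutative $\mathcal{H}$ (on the quantum torus, $\varepsilon_l(U^nV^m)=U^n$ while $\varepsilon_r(U^nV^m)=q^{nm}U^n$).

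With your $\alpha^{*}$ the regularity identities break down. One computes cleanly $\alpha^{*}\ast\alpha(h)=\alpha(S(h_{(1)})h_{(2)})=\varepsilon_r(h)\,\alpha(1_{\mathcal{H}})$, whence
\[
\alpha\ast\alpha^{*}\ast\alpha(h)=\varepsilon_r\bigl(t(\alpha(h_{(1)}))\,h_{(2)}\bigr)\,\alpha(1_{\mathcal{H}}),
\]
and to conclude that this equals $\alpha(h)$ you would need a counit identity pairing $\varepsilon_r$ with $\Delta_l$, which is not an axiom and fails in general. Likewise $\alpha\ast\alpha^{*}(h)$ reduces to $(\alpha\circ t)^{-1}\alpha\bigl(h_{(1)}S(h_{(2)})\bigr)$ taken with $\Delta_l$, and this ``crossed'' combination is evaluated by neither half of \emph{(HA5)}: the axioms give $S(h_{(1)})h_{(2)}=s(\varepsilon_r(h))$ for $\Delta_l$ and $h^{(1)}S(h^{(2)})=t(\varepsilon_l(h))$ for $\Delta_r$, but say nothing about $h_{(1)}S(h_{(2)})$; no application of \emph{(HA3)} outside of $\alpha\circ S$ can manufacture the missing leg. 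The paper's modified $\alpha^{*}$ is built precisely so that, after one use of \emph{(HA3)}, the product $\alpha\ast\alpha^{*}$ lands on $h^{(1)}S(h^{(2)})=t(\varepsilon_l(h))$, yielding $\alpha\ast\alpha^{*}=e^{\alpha}\,\varepsilon_l$ and $\alpha^{*}\ast\alpha=\alpha(1_{\mathcal{H}})\,\varepsilon_l$ --- both expressed through $\varepsilon_l$, which the left counit axiom then telescopes against $\Delta_l$. So the missing idea is the corrected formula for the pseudo-inverse; the rest of your outline is sound.
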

	\begin{proof}
		
		Define the convolution product between two biretractions $\alpha$ and $\beta$ by the expression
		$$(\alpha\ast \beta)(h)= \beta(\alpha(h_{(1)})\triangleright h_{(2)})= \beta t\alpha(h_{(1)}) \beta(h_{(2)}).$$
		
		Like in the commutative case, this product is associative and well-defined. Now define the pseudo-inverse for any biretraction $\alpha \in \mathfrak{Brt} (\mathcal{H}, A)$ and $h\in \mathcal{H}$ as
		\begin{align*}
			\alpha^*(h) & := (\alpha \circ t)^{-1} \alpha (S (\varepsilon_l (h^{(1)}) \triangleright h^{(2)}))\\
			& \,= (\alpha\circ t)^{-1}\circ \alpha (S((t\circ\varepsilon_l) (h^{(1)})\, h^{(2)}))\\
			& \,= (\alpha\circ t)^{-1}\circ \alpha( S(h^{(2)}) \, (s\circ\varepsilon_l) (h^{(1)}))\\
			& \,= (\alpha\circ t)^{-1} \left(\varepsilon_l(h^{(1)})\,(\alpha \circ S)(h^{(2)})\right).
		\end{align*}
		
		Observe that all maps $\alpha\circ t,$ $S,\, \Delta_r$ and $t\circ\varepsilon_l$ are multiplicative or antimultiplicative, thus $\alpha^\ast$ is multiplicative. Moreover, we have for every $a\in A$ that 
		$$\alpha^\ast\circ s(a)=(\alpha\circ t)^{-1}(\varepsilon_l (1_\mathcal{H})\, \alpha ((S\circ s)(a)))= (\alpha\circ t)^{-1} (\alpha \circ t)(a)=a e^\alpha=a\,\alpha^\ast(1_\mathcal{H})$$
		and 
		\begin{align*}
			\alpha^\ast\circ t(a\, \alpha(1_\mathcal{H})) & = (\alpha\circ t)^{-1}((\varepsilon_l\circ t) (a\, \alpha(1_\mathcal{H}))\, \alpha (S(1_\mathcal{H})))\\
			& = (\alpha\circ t)^{-1} (a\, \alpha(1_\mathcal{H})),
		\end{align*}
		that is, $\alpha^\ast\circ t|_{A\alpha(1_\mathcal{H})}= (\alpha\circ t)^{-1}|_{A\alpha(1_\mathcal{H})}.$ Then $\alpha^\ast$ is a biretraction with $e^{\alpha^\ast}=\alpha(1_\mathcal{H}).$ Also, \begin{align}\label{alpha_estrela}
			\alpha\ast\alpha^\ast (h) & = \alpha^\ast t\alpha (h_{(1)})\,\alpha^\ast (h_{(2)})   \nonumber\\
			& = (\alpha\circ t)^{-1}((\varepsilon_l\circ t) \alpha(h_{(1)}) \,\alpha (S(1_\mathcal{H}))) \, \, (\alpha\circ t)^{-1} \left(\varepsilon_l \left({h_{(2)}}^{(1)}\right)\alpha\circ S \left({h_{(2)}}^{(2)}\right)\right) \nonumber \\
			& = (\alpha\circ t)^{-1}  \alpha \left(h_{(1)} \, (s\circ     \varepsilon_l ) \left({h_{(2)}}^{(1)}\right)  S \left({h_{(2)}}^{(2)} \right)\right) \nonumber \\
			& = (\alpha\circ t)^{-1}  \alpha \left( {h^{(1)}}_{(1)}\, (s\circ\varepsilon_l) \left({h^{(1)}}_{(2)}   \right) S(  h^{(2)}   )\right) \nonumber \\
			& = (\alpha\circ t)^{-1} \alpha \,(h^{(1)}S(h^{(2)})) \nonumber \\
			& = (\alpha\circ t)^{-1} (\alpha\circ t) \varepsilon_l (h) \nonumber \\
			& = e^\alpha\,\varepsilon_l(h)
		\end{align}
		and
		\begin{align}\label{estrela_alpha}
			\alpha^\ast \ast \alpha \,(h) & = \alpha t \alpha^\ast (h_{(1)})\,\alpha (h_{(2)})  \nonumber \\
			& = (\alpha\circ t)  (\alpha\circ t)^{-1} \left( \varepsilon_l\left({h_{(1)}}^{(1)}\right)\alpha\circ S\left( {h_{(1)}}^{(2)} \right)    \right) \alpha(h_{(2)}) \nonumber \\
			& = \varepsilon_l(h^{(1)})\, \alpha\left( S\left({h^{(2)}}_{(1)} \right) {h^{(2)}}_{(2)}  \right) \nonumber\\
			& = \varepsilon_l(h^{(1)})\, (\alpha\circ s) (\varepsilon_r (h^{(2)}))\nonumber \\
			& = \varepsilon_l(h^{(1)})\,\varepsilon_r(h^{(2)})\,\alpha (1_\mathcal{H}) \nonumber \\
			& = \varepsilon_l(s\circ \varepsilon_r(h^{(2)}) h^{(1)}) \,\alpha(1_\mathcal{H}) \nonumber \\
			& \stackrel{(\ast)}{=} \varepsilon_l (h^{(1)} (s\circ \varepsilon_r) (h^{(2)}))\,\alpha(1_\mathcal{H}) \nonumber\\
			& = \alpha(1_\mathcal{H})\, \varepsilon_l(h)
		\end{align}
		for every $h\in\mathcal{H}.$ Recall that for a biretraction $\alpha,$ we have $\alpha(hk)=\alpha(kh)$ for every $h,k\in \mathcal{H},$ which was used in $(\ast)$ for the biretraction $\alpha(1_\mathcal{H})\,\varepsilon_l.$
		
		Now using the identities (\ref{alpha_estrela}) and (\ref{estrela_alpha}), we get
		\begin{align}\label{alpha}
			\alpha\ast \alpha^\ast \ast \alpha \,(h) & = \alpha t  (\alpha\ast \alpha^\ast)(h_{(1)}) \,\alpha(h_{(2)}) \nonumber\\
			& =(\alpha \circ t) (e^\alpha\varepsilon_l(h_{(1)}))\, \alpha(h_{(2)}) \nonumber \\
			& = \alpha(t\circ\varepsilon_l(h_{(1)})h_{(2)}) \nonumber\\
			& = \alpha(h)
		\end{align}
		and
		\begin{align}\label{estrela}
			\alpha^\ast \ast\alpha\ast \alpha^\ast (h)& = \alpha^\ast t  (\alpha^\ast \ast\alpha)(h_{(1)})\, \alpha^\ast (h_{(2)}) \nonumber\\
			& = (\alpha\circ t)^{-1} \left( (\varepsilon_l\circ t) (\alpha^\ast \ast \alpha)(h_{(1)})\,\alpha(1_\mathcal{H}))\, \,\varepsilon_l\left({h_{(2)}}^{(1)}\right)\, \alpha\circ S\left( {h_{(2)}}^{(2)}\right)\right)\nonumber \\
			& = (\alpha\circ t)^{-1}\left(\alpha(1_\mathcal{H}) \,\varepsilon_l(h_{(1)}) \,\varepsilon_l \left({h_{(2)}}^{(1)}\right)\, \alpha\circ S\left( {h_{(2)}}^{(2)}\right)    \right)\nonumber\\
			& = (\alpha\circ t)^{-1}\left(\varepsilon_l \left({h^{(1)}}_{(1)}\right) \varepsilon_l\left( {h^{(1)}}_{(2)}\right)\, \alpha\circ S(h^{(2)})   \right) \nonumber\\
			& = (\alpha\circ t)^{-1}\left( \varepsilon_l\left(t\circ \varepsilon_l \left({h^{(1)}}_{(1)}\right) {h^{(1)}}_{(2)}\right)\, \alpha\circ S(h^{(2)})   \right) \nonumber\\
			& = (\alpha\circ t)^{-1}(\varepsilon_l(h^{(1)})\,\alpha \circ S(h^{(2)})) = \alpha^\ast(h)
		\end{align}
		for all $h\in \mathcal{H}.$ Therefore, $\mathfrak{Brt} (\mathcal{H}, A)$ is a regular semigroup.
	\end{proof}
	
	\begin{rmk}
		Observe that given a biretraction $\alpha: \mathcal{H}\rightarrow A,$
		\begin{align*}
			\left((e^\alpha\,\varepsilon_l) \ast \alpha\right)(h) & = \alpha\circ t( e^\alpha\,\varepsilon_l(h_{(1)}))\,\alpha(h_{(2)})\\
			& = \alpha(t\circ\varepsilon_l(h_{(1)})h_{(2)})=\alpha(h)
		\end{align*}
		and
		\begin{align*}
			\left(\alpha\ast (\alpha(1_\mathcal{H})\, \varepsilon_l)\right)(h) & = \alpha(1_\mathcal{H})\, (\varepsilon_l\circ t)\alpha(h_{(1)}) \,\alpha(1_\mathcal{H})\,\varepsilon_l(h_{(2)})\\
			& = \alpha(h_{(1)})\,\varepsilon_l(h_{(2)})\\
			& = \alpha \left((s\circ\varepsilon_l)(h_{(2)})h_{(1)}\right)= \alpha(h)
		\end{align*} 
		for every $h\in\mathcal{H}.$ Also note that
		\begin{align*}
			(e^\alpha\varepsilon_l)^\ast (h)&= (e^\alpha\varepsilon_l \circ t)^{-1}(\varepsilon_l(h^{(1)})\, e^\alpha\varepsilon_l \circ S(h^{(2)}))\\
			& = e^\alpha\varepsilon_l(h^{(1)})\,\varepsilon_r(h^{(2)})= e^\alpha \varepsilon_l(h)
		\end{align*}
		and analogously, $(\alpha(1_\mathcal{H})\,\varepsilon_l)^\ast =\alpha(1_\mathcal{H})\,\varepsilon_l.$
	\end{rmk}
	
	Now consider the free vector space generated by the biretractions of $\mathcal{H}$ and extend linearly the convolution product to this space. Then, we have an algebra structure on the space $\K \mathfrak{Brt} (\mathcal{H}, A)$, henceforth denoted by $\mathfrak{B} (\mathcal{H})$.
	
	\begin{thm}
		Let $\mathcal{H}$ be a Hopf algebroid over a commutative algebra $A$ such that $s_l=t_r=t,$ $s_r=t_l=s.$ Then the algebra $\mathfrak{B}(\mathcal{H})$, generated by the set of biretractions of $\mathcal{H}$ with the convolution product is a quantum inverse semigroup.
	\end{thm}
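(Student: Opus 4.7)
The plan is to mirror the strategy of the commutative case: define $\underline{\Delta}(\alpha)=\alpha\otimes\alpha$ and $\mathcal{S}(\alpha)=\alpha^{*}$ on basis biretractions and extend both linearly to $\mathfrak{B}(\mathcal{H})=\K\mathfrak{Brt}(\mathcal{H},A)$. Axiom (QISG1) is immediate from $\mathfrak{B}(\mathcal{H})$ being the semigroup algebra of the regular semigroup $\mathfrak{Brt}(\mathcal{H},A)$ (we do not claim unitality here, since in the noncommutative setting the regular semigroup of biretractions need not be a monoid). Axiom (QISG2) is also immediate: since the convolution of two biretractions is again a biretraction (Theorem \ref{regular_semigroup}), the map $\underline{\Delta}$ is multiplicative, and coassociativity holds trivially on group-like generators. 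The relations $I\ast\mathcal{S}\ast I=I$ and $\mathcal{S}\ast I\ast \mathcal{S}=\mathcal{S}$ of axiom (QISG3)(ii) are then nothing but equations (\ref{alpha}) and (\ref{estrela}) from the proof of Theorem \ref{regular_semigroup}, extended by linearity.

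For the antimultiplicativity of $\mathcal{S}$ (axiom (QISG3)(i)) I would first establish the analog of Remark \ref{convinv}, namely $((\alpha\ast\beta)\circ t)^{-1}=(\alpha\circ t)^{-1}\circ(\beta\circ t)^{-1}$. Equation \eqref{equation.precomposition.convolution.with.t} holds in the noncommutative setting as well, since its proof only uses $\Delta_l\circ t=t\otimes_A 1_\mathcal{H}$, and therefore the computation of Remark \ref{convinv} carries over verbatim. Using the explicit formula $\alpha^{*}(h)=(\alpha\circ t)^{-1}\bigl(\varepsilon_l(h^{(1)})\,\alpha(S(h^{(2)}))\bigr)$ together with this identity, one expands $\mathcal{S}(\alpha\ast\beta)(h)$ and $(\mathcal{S}(\beta)\ast\mathcal{S}(\alpha))(h)$ into expressions built out of $\alpha\circ S$, $\beta\circ S$, $\varepsilon_l$, $(\alpha\circ t)^{-1}$, $(\beta\circ t)^{-1}$ and $\alpha\circ s$; since all these take values in the commutative algebra $A$, rearrangement of factors is legal and both expressions coincide, exactly as in the commutative case.

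For axiom (QISG4), I would exploit equations (\ref{alpha_estrela}) and (\ref{estrela_alpha}): the images of $I\ast\mathcal{S}$ and $\mathcal{S}\ast I$ lie in the $\K$-linear span of functionals of the form $e\,\varepsilon_l$ with $e\in A$ idempotent. A direct convolution computation, using $\varepsilon_l\circ t=\mathrm{id}_A$ and the identity $\varepsilon_l(h_{(1)})\,\varepsilon_l(h_{(2)})=\varepsilon_l(h)$ (which follows from (LB4) combined with the left bialgebroid counit property $h_{(1)}\,t(\varepsilon_l(h_{(2)}))=h$), yields $(e\,\varepsilon_l)\ast(f\,\varepsilon_l)=ef\,\varepsilon_l$. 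Commutativity of $A$ then gives $(e\,\varepsilon_l)\ast(f\,\varepsilon_l)=(f\,\varepsilon_l)\ast(e\,\varepsilon_l)$, which is precisely the required commutation
\[
h_{(1)}\mathcal{S}(h_{(2)})\mathcal{S}(k_{(1)})k_{(2)}=\mathcal{S}(k_{(1)})k_{(2)}h_{(1)}\mathcal{S}(h_{(2)})
\]
evaluated on convolution products of biretractions.

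The main obstacle I foresee is the antimultiplicativity step, because care is required to track which comultiplication (left or right bialgebroid) appears at each place: the convolution is built from $\Delta_l$ while the definition of $\alpha^{*}$ uses $\Delta_r$, and the interplay is mediated by axiom (HA3) and by $\varepsilon_l\circ S=\varepsilon_r$, $\varepsilon_r\circ S=\varepsilon_l$. Once the factorization $((\alpha\ast\beta)\circ t)^{-1}=(\alpha\circ t)^{-1}\circ(\beta\circ t)^{-1}$ is in place, all remaining manipulations take place inside the commutative base algebra $A$, so the rest of the argument is a bookkeeping exercise analogous to the commutative Hopf algebroid case.
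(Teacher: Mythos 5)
Your proposal follows essentially the same route as the paper: the same group-like comultiplication and pseudo-antipode $\mathcal{S}(\alpha)=\alpha^{*}$, (QISG3)(ii) read off from equations (\ref{alpha}) and (\ref{estrela}), (QISG4) from the fact that $\alpha\ast\alpha^{*}=e^{\alpha}\varepsilon_l$ and $\alpha^{*}\ast\alpha=\alpha(1_{\mathcal{H}})\varepsilon_l$ convolve commutatively, and antimultiplicativity via the factorization $((\alpha\ast\beta)\circ t)^{-1}=(\alpha\circ t)^{-1}\circ(\beta\circ t)^{-1}$ together with $\Delta_l\circ S=(S\otimes S)\circ\Delta_r^{cop}$ and (HA3). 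The one step you leave as a sketch, the Sweedler-index bookkeeping in the antimultiplicativity computation, is exactly the computation the paper carries out, using precisely the identities you name, so the plan is sound.
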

	
	\begin{proof}
		With the comultiplication $\underline{\Delta} :\mathfrak{B} (\mathcal{H}) \rightarrow \mathfrak{B} (\mathcal{H})$ defined as $\underline{\Delta}(\alpha )=\alpha \otimes \alpha$ for every $\alpha\in \mathfrak{Brt} (\mathcal{H}, A),$ we have that $\underline{\Delta}$ is multiplicative, just like in the commutative case. Also, defining $\mathcal{S}(\alpha)=\alpha^\ast,$ we have from the expressions (\ref{alpha}) and (\ref{estrela}) from Theorem \ref{regular_semigroup} that $I\ast \mathcal{S}\ast I= I$ and $\mathcal{S}\ast I\ast \mathcal{S}=\mathcal{S}.$ Moreover, $\mathcal{S}$ is antimultiplicative: for every $\alpha,\beta \in \mathfrak{Brt} (\mathcal{H}, A)$ and $h\in\mathcal{H},$
		
		\begin{small}
			\begin{align*}
				\mathcal{S}(\alpha\ast\beta)(h) & = (\alpha\ast\beta)^\ast (h)\\
				& = ((\alpha\ast\beta)\circ t)^{-1}(\varepsilon_l(h^{(1)}) \,(\alpha\ast\beta)\circ S(h^{(2)}))\\
				& = (\alpha\circ t)^{-1}\circ (\beta\circ t)^{-1} \left(\varepsilon_l(h^{(1)})\,\beta\circ t\circ\alpha \left(\left(S(h^{(2)})\right)_{(1)}\right) \, \beta \left(\left(S(h^{(2)})\right)_{(2)}\right)     \right)\\
				& \stackrel{(\ast)}{=} (\alpha\circ t)^{-1}\left( (\beta\circ t)^{-1} \left(\varepsilon_l(h^{(1)})\, \beta\circ S(h^{(2)(1)}) \right) \, \alpha\circ S(h^{(2)(2)})\right)\\
				& = (\alpha\circ t)^{-1}\left(\beta^\ast (h^{(1)})\, \alpha\circ S(h^{(2)})\right),
		\end{align*}\end{small}
		where in $(\ast)$ we used the property $\Delta_l\circ S = (S\otimes_l S)\circ \Delta_r^{cop},$ which holds for any Hopf algebroid. Conversely,
		
		\begin{small}
			\begin{align*}
				(\mathcal{S}(\beta)\,\,\ast &\,\, \mathcal{S}(\alpha))(h) = (\beta^\ast \ast \alpha^\ast)(h)\\
				& = \alpha^\ast\circ t \circ \beta^\ast (h_{(1)})\, \alpha^\ast (h_{(2)})\\
				& = (\alpha\circ t)^{-1} (\varepsilon_l\circ t\circ \beta^\ast (h_{(1)})\,\alpha(1_\mathcal{H}))\, \alpha^\ast (h_{(2)})\\
				& = (\alpha\circ t)^{-1} \left( (\beta\circ t)^{-1} \left( \varepsilon_l\left({h_{(1)}}^{(1)}\right)\, \beta\circ S\left({h_{(1)}}^{(2)}\right) \right) \,\varepsilon_l\left({h_{(2)}}^{(1)}\right) \alpha\circ S \left({h_{(2)}}^{(2)}\right)\right)\\
				& = (\alpha\circ t)^{-1} \left( (\beta\circ t)^{-1} \left( \varepsilon_l\left(\aga{1}{1}{1}\right)\, \beta\circ S\left(\aga{1}{2}{1}\right) \,\beta\circ t\circ \varepsilon_l\left({h^{(1)}}_{(2)}\right)\right) \alpha\circ S (h^{(2)})\right)\\
				& = (\alpha\circ t)^{-1} \left( (\beta\circ t)^{-1} \left( \varepsilon_l\left(h^{(1)(1)}\right)\, \beta\left( S\left({h^{(1)(2)}}_{(1)}\right) \, t\circ \varepsilon_l\left({h^{(1)(2)}}_{(2)}\right)\right)\right) \alpha\circ S (h^{(2)})\right)\\
				& = (\alpha\circ t)^{-1} \left( (\beta\circ t)^{-1} \left( \varepsilon_l\left(h^{(1)(1)}\right)\, \beta\circ S(h^{(1)(2)})\right) \alpha\circ S (h^{(2)})\right)\\
				& = (\alpha\circ t)^{-1}\left(\beta^\ast (h^{(1)})\, \alpha\circ S(h^{(2)})\right).
		\end{align*}\end{small}

		Consequently, $\mathcal{S}(\alpha\ast\beta) =  \mathcal{S}(\beta)\ast \mathcal{S}(\alpha)$ and the axiom (QISG3) is verified.
		
		Finally, for checking axiom (QISG4) for any $\alpha ,\beta \in \mathfrak{Brt}(\mathcal{H},A),$ the expressions (\ref{alpha_estrela}) and (\ref{estrela_alpha}) imply that
		\begin{eqnarray*}
			\alpha_{(1)} \ast \mathcal{S}(\alpha_{(2)}) \ast \mathcal{S}(\beta_{(1)})\ast \beta_{(2)}(h) & = & (\alpha \ast \alpha^*)\ast (\beta^* \ast \beta)(h) \\
			& = & (\beta^* \ast \beta)  t  (\alpha \ast \alpha^*) (h_{(1)})\, (\beta^* \ast \beta)(h_{(2)}) \\
			& = & \beta(1_\mathcal{H})\,(\varepsilon_l\circ t) (\alpha \ast \alpha^*) (h_{(1)}) \,\varepsilon_l(h_{(2)})\\
			& = & \beta(1_\mathcal{H})\, e^\alpha\,\varepsilon_l (h_{(1)})\,\varepsilon_l(h_{(2)})\\
			& = & \beta(1_\mathcal{H})\, e^\alpha\, \varepsilon_l(h)
		\end{eqnarray*}	
		The same result for $\mathcal{S}(\beta_{(1)})\ast \beta_{(2)} \ast \alpha_{(1)} \ast \mathcal{S}(\alpha_{(2)})(h)$.
		
		Therefore, $\mathfrak{B} (\mathcal{H})$ is a Quantum Inverse Semigroup.
	\end{proof}

	\begin{rmk}\label{s=t}
		Consider a Hopf algebroid $\mathcal{H}$ over a commutative algebra $A$ with $s=s_l=t_l=s_r=t_r.$ A local biretraction for $\mathcal{H}$ is a linear and multiplicative map $\alpha:\mathcal{H}\rightarrow A$ that satisfies $\alpha \circ s (a) =a\,\alpha (1_{\mathcal{H}})$ for every $a\in A$ and there exists $e^\alpha\in A$ such that $\alpha\circ s(e^\alpha)=\alpha(1_\mathcal{H})$ and
		$\alpha \circ s|_{A\, e^\alpha} : A\, e^\alpha \longrightarrow A\, \alpha(1_\mathcal{H})$ is a bijection.
		Combining both conditions we have
		\begin{align*}
			\alpha\circ s(\alpha(1_\mathcal{H})\, e^\alpha) & = \alpha\circ s(\alpha (1_\mathcal{H}))\, \alpha\circ s(e^\alpha)\\
			& = \alpha (1_\mathcal{H})\,\alpha (1_\mathcal{H})\,\alpha (1_\mathcal{H}) \\
			& = \alpha (1_\mathcal{H}).
		\end{align*}
		Since $\alpha(1_\mathcal{H})\, e^\alpha\in A\, e^\alpha$ then $\alpha (1_\mathcal{H})\, e^\alpha = e^\alpha.$ Therefore
		$$\alpha (1_\mathcal{H}) = \alpha\circ s (e^\alpha) = e^\alpha\, \alpha (1_\mathcal{H}) = e^\alpha.$$
		
		Moreover, for every $a\in A,$
		$$\alpha\circ s (a\,\alpha (1_\mathcal{H})) = a\alpha (1_\mathcal{H})\, \alpha (1_\mathcal{H})= a\, \alpha (1_\mathcal{H}).$$
		Consequently,  we can describe a local biretraction for $\mathcal{H}$ as a linear and multiplicative map $\alpha:\mathcal{H}\rightarrow A$ such that $\alpha\circ s|_{A\,\alpha (1_\mathcal{H})} = \emph{\textrm{Id}}_{A\,\alpha (1_\mathcal{H})}.$
	\end{rmk}
	
	\begin{exmp}\label{weakalgebroid}
		Recall the definition of a weak Hopf algebra from \emph{Example \ref{weak}}. A weak Hopf algebra $(H,\mu,\eta,\Delta,\varepsilon, S)$ has a structure of Hopf algebroid over the algebras $H_t=\varepsilon_t(H)$ and $H_s=\varepsilon_s(H)$ given by
		$$s_r(x)=x \hspace{30pt} t_r(x)=\varepsilon (x1_{(1)}) 1_{(2)} \hspace{30pt} \Delta_r=\pi_{H_s}\circ\Delta \hspace{30pt} \varepsilon_r=\varepsilon_s$$
		for every $x\in H_s,$ where $\pi_{H_s}:H\otimes_\K H\rightarrow H \otimes_{H_s} H$ and
		$$s_l(x)=x \hspace{30pt} t_l(x)=\varepsilon (1_{(2)} x) 1_{(1)} \hspace{30pt} \Delta_l=\pi_{H_t}\circ\Delta \hspace{30pt} \varepsilon_l=\varepsilon_t$$
		for every $x\in H_t,$ where $\pi_{H_t}:H\otimes_\K H\rightarrow H \otimes_{H_t} H.$
		
		Observe that for every $x\in H_s,$ $x$ can be written as $x=\varepsilon_s (h)= 1_{(1)}\,\varepsilon (h1_{(2)})$ for some $h\in H.$ Then
		\begin{align*}
			\varepsilon_s(x) & = 1_{(1)}\,\varepsilon(x 1_{(2)})\\
			& = 1_{(1)}\,\varepsilon (1_{(1')}\,\varepsilon (h 1_{(2')})1_{(2)})\\
			& = 1_{(1)}\,\varepsilon (h 1_{(2')})\,\varepsilon (1_{(1')}1_{(2)})\\
			& = 1_{(1)}\,\varepsilon (h1_{(2)})= x.
		\end{align*}
		Similarly, we have that $\varepsilon_t(x)= \varepsilon(1_{(1)} x)1_{(2)}=x$ for every $x\in H_t.$
		
		Now suppose that $H_t=H_s$ and that $A:=H_t=H_s$ is commutative. Then, for every $x\in A,$ we have that
		$$1_{(1)}\varepsilon(x 1_{(2)})=x=\varepsilon(1_{(1)} x)1_{(2)},$$
		which implies that
		\begin{align*}
			t_r(x) & = \varepsilon (x1_{(1)})\,1_{(2)}\\
			& = \varepsilon (\varepsilon (1_{(1')}x)\,1_{(2')}1_{(1)})\,1_{(2)}\\
			& = \varepsilon (1_{(2')}1_{(1)})\,\varepsilon (1_{(1')}x)\,1_{(2)}\\
			& = \varepsilon (1_{(1')}1_{(2)})\,\varepsilon (1_{(1)}x)\,1_{(2')}\\
			& = \varepsilon(1_{(1)}x)\,1_{(2)}= x
		\end{align*}
		and
		\begin{align*}
			t_l(x) & = \varepsilon (1_{(2)}x)\,1_{(1)}\\
			& = \varepsilon (1_{(2)}1_{(1')}\,\varepsilon(x1_{(2')}))\,1_{(1)}\\
			& = \varepsilon(x1_{(2')})\,\varepsilon (1_{(2)}1_{(1')})\,1_{(1)}\\
			& = \varepsilon(x1_{(2)})\,\varepsilon (1_{(1)}1_{(2')})\,1_{(1')}\\
			& = \varepsilon (x1_{(2)})\,1_{(1)}= x.
		\end{align*}
		Therefore, we have that $s_l=t_r=s_r=t_l$ are all the inclusion map $A\rightarrow \mathcal{H}.$ We also have that
		$$x=\varepsilon(x1_{(1)})1_{(2)}= \varepsilon(x1_{(2)})1_{(1)} = \varepsilon(1_{(2)}x)1_{(1)}= \varepsilon(1_{(1)}x)1_{(2)}$$
		for every $x\in A$ and if $h\in H.$
		
		Then by the Remark \ref{s=t}, a local biretraction for a weak Hopf algebra with $A:=H_t=H_s$ commutative is a linear and multiplicative map $\alpha:\mathcal{H}\rightarrow A$ such that $\alpha|_{A\,\alpha(1_\mathcal{H})}= Id_{A\, \alpha(1_\mathcal{H})}.$ 
	\end{exmp}

	\begin{exmp}
		As a particular case from the previous example, consider a finite groupoid $\mathcal{G}$ and its groupoid algebra $\K\mathcal{G}$ given by
		$$\K\mathcal{G}= \left\{ \sum_{g\in\mathcal{G}} a_g\, \delta_g\, |\, g\in\mathcal{G},\, a_g\in\K \right\}$$
		with product $\delta_g\delta_h=\delta_{gh}$ if $g,h\in\mathcal{G}$ are multipliable and $\delta_g\delta_h=0,$ otherwise. $\K\mathcal{G}$ is an algebra with  unity
		$$1_{\K\mathcal{G}}=\sum_{x\in \mathcal{G}^{(0)}}\delta_{1_x}$$
		and a coalgebra with structure given in its base elements by $\Delta(\delta_g)=\delta_g\otimes\delta_g$ and $\varepsilon(\delta_g)=1.$ From the \emph{Example \ref{weak}}, $\K\mathcal{G}$ is a weak Hopf algebra with
		$$\varepsilon_t(\delta_g)= \varepsilon(1_{(1)}\delta_g)\, 1_{(2)}= \sum_{x\in\mathcal{G}^{(0)}}\varepsilon(\delta_{1_x} \delta_g)\,\delta_{1_x}=\varepsilon(\delta_{1_{t(g)}} \delta_g)\, \delta_{1_{t(g)}}=\delta_{1_{t(g)}},$$
		$$\varepsilon_s(\delta_g)= 1_{(1)}\,\varepsilon(\delta_g 1_{(2)})= \sum_{x\in\mathcal{G}^{(0)}}\delta_{1_x} \varepsilon( \delta_g \delta_{1_x})= \delta_{1_{s(g)}} \varepsilon(\delta_g\,\delta_{1_{s(g)}} )=\delta_{1_{s(g)}}$$
		and $S(\delta_g)=\delta_{g^{-1}}$ for every $g\in\mathcal{G}.$ Finally, $\K\mathcal{G}$ also has a Hopf algebroid structure over the algebra $A=\langle \delta_{1_x}\,|\, x\in\mathcal{G}^{(0)}\rangle$ given by $s_l=t_l=s_r=t_r$ being the inclusion maps $A\rightarrow \K\mathcal{G},$
		$$\Delta_l=\Delta_r=\pi_A\circ\Delta \qquad \varepsilon_l=\varepsilon_t \qquad \varepsilon_r=\varepsilon_s$$
		and the same $S.$
		
		Observe that $A$ is a commutative algebra. Hence by the \emph{Remark \ref{s=t}}, a biretraction for $\K\mathcal{G}$ is a linear and multiplicative map $\alpha: \K\mathcal{G} \rightarrow A$ such that $\alpha|_{A\alpha(1_{\K\mathcal{G}})}=\text{\emph{Id}}_{A\alpha(1_{\K\mathcal{G}})}.$ Now we have for any $\alpha: \K\mathcal{G} \rightarrow A$ biretraction,
		\begin{itemize}
			\item $\alpha(1_{\K\mathcal{G}})$ is an idempotent. Then, $\alpha(1_{\K\mathcal{G}})$ can be written as
			$$\alpha(1_{\K\mathcal{G}})= \sum_{x\in X}\delta_{1_x}$$
			for some $X\subseteq\mathcal{G}^{(0)}.$ If $X=\mathcal{G}^{(0)},$ we have a global biretraction.
			
			\item If $y\in X,$
			$$\delta_{1_y}=\delta_{1_y}\alpha(1_{\K\mathcal{G}})= \alpha(\delta_{1_y}\alpha(1_{\K\mathcal{G}}))= \sum_{x\in X} \alpha(\delta_{1_y} \delta_{1_x})= \alpha(\delta_{1_y}).$$
			Then
			$$\sum_{x\in X}\delta_{1_x}= \alpha(1_{\K\mathcal{G}})= \sum_{y\in\mathcal{G}^{(0)}}\alpha(\delta_{1_y})= \sum_{x\in X}\delta_{1_x}+ \sum_{z\in\mathcal{G}^{(0)} \setminus X}\alpha(\delta_{1_z}),$$
			which implies that $\sum_{z\in\mathcal{G}^{(0)} \setminus X}\alpha(\delta_{1_z})=0,$ hence
			$$\alpha(\delta_{1_y})= \alpha(\delta_{1_y})\left( \sum_{z\in\mathcal{G}^{(0)} \setminus X}\alpha(\delta_{1_z})\right) = 0$$
			for every $y\in\mathcal{G}^{(0)}\setminus X.$
			
			\item Now for any $g\in\mathcal{G},$ we can write
			$$\alpha(\delta_g)=\sum_{y\in\mathcal{G}^{(0)}} a_y^g \delta_{1_y}.$$
			If $s(g)\notin X,$ then
			$$\alpha(\delta_g)=\alpha(\delta_g\delta_{1_{s(g)}})=\alpha(\delta_g)\,\alpha(\delta_{1_{s(g)}})=0.$$
			If $t(g)\notin X,$ then
			$$\alpha(\delta_g)=\alpha(\delta_{1_{t(g)}}\delta_g)= \alpha(\delta_{1_{t(g)}})\,\alpha(\delta_g)=0.$$
			If $s(g)$ and $t(g)$ are in $X,$
			$$\alpha(\delta_g)=\alpha(\delta_g\delta_{1_{s(g)}})= \alpha(\delta_g)\,\alpha(\delta_{1_{s(g)}})= \sum_{y\in\mathcal{G}^{(0)}} a_y^g \delta_{1_y}\delta_{1_{s(g)}}= a_{s(g)}^g\delta_{ 1_{s(g)}}$$
			and
			$$\alpha(\delta_g)=\alpha(\delta_{1_{t(g)}}\delta_g)= \alpha(\delta_{1_{t(g)}})\,\alpha(\delta_g)= \sum_{y\in\mathcal{G}^{(0)}} a_y^g \delta_{1_{t(g)}}\delta_{1_y}= a_{t(g)}^g\delta_{ 1_{t(g)}}$$
			with $a_{s(g)}^g,\, a_{t(g)}^g\in\K.$ Hence for $\alpha(\delta_g)$ to be nonzero, we need $s(g)=t(g)\in X.$ Moreover, we have that if $s(g)=t(g)=x\in X$ then $\alpha(\delta_g)=a_g\,\delta_{1_x}$ with $a_g\in\K \setminus \{0\}.$ In fact, if $\alpha(\delta_g)=0$ then
			$$0=\alpha(\delta_g)\,\alpha(\delta_{g^{-1}})= \alpha(\delta_{1_{t(g)}})=\alpha(\delta_{1_x}) = \delta_{1_x},$$
			which is a contradiction.
			
			\item $\mathfrak{Brt}(\K\mathcal{G},A)$ is commutative: for any $\alpha,\beta\in\mathfrak{Brt} (\K\mathcal{G},A)$ we have
			$$\alpha(\delta_g)= \begin{cases} a_g\,\delta_{1_x}, & \textrm{ if } s(g)=t(g)=x \in X\subseteq \mathcal{G}^{(0)}\\ 0, & \textrm{ otherwise}  \end{cases}$$
			and
			$$\beta(\delta_g)= \begin{cases} b_g\,\delta_{1_y}, & \textrm{ if } s(g)=t(g)=y \in Y\subseteq \mathcal{G}^{(0)}\\ 0, & \textrm{ otherwise}  \end{cases}$$
			with $a_g\in\K\setminus\{0\},$ $b_g\in\K\setminus \{0\},$ $a_{1_x}=1$ and $b_{1_y}=1$ for every $x\in X$ and $y\in Y.$ Then
			\begin{align*}
				(\alpha\ast\beta)(\delta_g) & = \beta\circ t\circ\alpha (\delta_g)\, \beta(\delta_g) = \beta\circ\alpha(\delta_g)\,\beta(\delta_g)\\
				& = \beta(a_g\,\delta_{1_x})\,\beta(\delta_g)\, \llbracket s(g)=t(g)=x\in X\rrbracket\\
				& = a_g b_g\,\delta_{1_x} \, \llbracket s(g)=t(g)=x\in X\cap Y\rrbracket\\
				& = (\beta\ast\alpha)(\delta_g)
			\end{align*}
			for every $g\in\mathcal{G}.$ Observe that this means that $\mathfrak{Brt}(\K\mathcal{G},A)$ is an inverse semigroup, with $\alpha^\ast$ given by
			\begin{align*}
				\alpha^\ast(\delta_g) & = (\alpha\circ t)^{-1}\left( \varepsilon_l(\delta_g)\,\alpha \circ S(\delta_g)\right) \\
				& = \delta_{1_{t(g)}}\,\alpha(\delta_{g^{-1}})= \alpha(\delta_{1_{t(g)}}\,\delta_{g^{-1}})\\
				& = \alpha(\delta_{g^{-1}})  = \alpha\circ S(\delta_{g})
			\end{align*}
			for every $g\in\mathcal{G}.$ $\mathfrak{Brt}(\K\mathcal{G},A)$ also has a unity $\emph{\textbf{1}}:\K\mathcal{G}\rightarrow A$ given by
			$$\emph{\textbf{1}}(\delta_g)=\delta_{1_x}\, \llbracket s(g)=t(g)=x\rrbracket$$
			for every $g\in\mathcal{G}.$
		\end{itemize}
		
		With these remarks, we can represent the biretractions using the characters from the isotropy groups $G_x=\{g\in \mathcal{G}\,|\, s(g)=t(g)=x\}.$ Being $\mathcal{G}^{(0)}=\{x_1,\ldots,x_n\},$ consider the algebra
		$$\mathcal{F}=\prod_{i=1}^n\{\varphi_i:G_{x_i} \rightarrow \K\setminus\{0\} \textrm{ morphism of groups}\} \cup \{0=\varphi_i: G_{x_i}\rightarrow \K\}$$
		with the pointwise product. The elements of $\mathcal{F}$ are $n-$tuple of characters from the isotropy groups of $G$ or zero maps.  $\mathcal{F}$ is also a commutative inverse semigroup with $(\varphi_1,\ldots,\varphi_n)^\ast = (\varphi_1^\ast,\ldots,\varphi_n^\ast),$ where
		$$\varphi_i^\ast(g)=\begin{cases}\varphi_i(g^{-1}), & \textrm{ if }\varphi_i\neq 0\\ 0, &\textrm{ if } \varphi_i=0.\end{cases}$$
		For each $(\varphi_1,\ldots, \varphi_n)\in\mathcal{F}$ and $g\in\mathcal{G},$ we can define
		$$\alpha_{(\varphi_1,\ldots, \varphi_n)}(\delta_g) = \begin{cases}
			\varphi_i(g)\,\delta_{1_{x_i}}, & \text{ if } s(g)=t(g)=x_i\\
			0, & \text{ if }s(g)\neq t(g).
		\end{cases}$$
		Then for every $(\varphi_1,\ldots,\varphi_n),\, (\psi_1,\ldots,\psi_n)\in \mathcal{F}$ and $g\in\mathcal{G,}$
		\begin{align*}
			\alpha_{(\varphi_1,\ldots, \varphi_n)}\ast \alpha_{(\psi_1,\ldots,\psi_n)}(\delta_g) & = \alpha_{(\psi_1,\ldots,\psi_n)}\circ \alpha_{(\varphi_1,\ldots, \varphi_n)}(\delta_g) \,\alpha_{(\psi_1,\ldots,\psi_n)}(\delta_g)\\
			& = \alpha_{(\psi_1,\ldots,\psi_n)}(\varphi_i(g)\, \delta_{1_{x_i}}) \, \alpha_{(\psi_1,\ldots,\psi_n)}(\delta_g)\, \llbracket s(g)=t(g)=x_i\rrbracket\\
			& = \varphi_i(g)\psi_i(g)\psi_i(1_{x_i})\, \delta_{1_{x_i}}\, \llbracket s(g)=t(g)=x_i\rrbracket\\
			& = \varphi_i(g)\psi_i(g)\, \delta_{1_{x_i}}\, \llbracket s(g)=t(g)=x_i\rrbracket \\
			& = \alpha_{(\varphi_1\psi_1,\ldots,\varphi_n \psi_n)}( \delta_g)\\
			& = \alpha_{(\varphi_1,\ldots,\varphi_b)(\psi_1, \ldots,\psi_n)}(\delta_g)
		\end{align*}
		and the map
		\begin{center}
			\begin{tabular}{rcll}
				$\alpha:$ & $\mathcal{F}$ & $\longrightarrow$ & $\mathfrak{Bir}(\K\mathcal{G},A)$\\
				& $(\varphi_1,\ldots,\varphi_n)$ & $\longmapsto$ & $\alpha_{(\varphi_1,\ldots,\varphi_n)}$
			\end{tabular}
		\end{center}
		is an isomorphism of inverse semigroups, because
		\begin{align*}
			\alpha_{(\varphi,\ldots,\varphi_n)^\ast}(\delta_g) & = \varphi_i^\ast (g)\,\delta_{1_{x_i}}\, \llbracket s(g)=t(g)=x_i\rrbracket\\
			& = \varphi_i(g^{-1})\,\delta_{1_{x_i}} \, \llbracket s(g^{-1})=t(g^{-1})=x_i\rrbracket\\
			& = \alpha_{(\varphi_1,\ldots,\varphi_n)}(\delta_{g^{-1}})\\
			& = \alpha_{(\varphi_1,\ldots,\varphi_n)}\circ S(\delta_g)\\
			& = \alpha^\ast_{(\varphi_1,\ldots,\varphi_n)}(\delta_g)
		\end{align*}
		for every $g\in\mathcal{G}.$ The map $\alpha$ also takes unity to unity, because
		\begin{align*}
			\alpha_{(\emph{\textbf{1}}_1,\ldots,\emph{\textbf{1}}_n)}(\delta_g) & = \emph{\textbf{1}}_i(\delta_{1_{x_i}})\,\delta_{1_{x_i}}\,\llbracket s(g)=t(g)=x_i\rrbracket\\
			& = \delta_{1_{x_i}}\,\llbracket s(g)=t(g)=x_i\rrbracket = \emph{\textbf{1}}(\delta_g)
		\end{align*}
		for every $g\in\mathcal{G},$ with $\emph{\textbf{1}}_i:G_{x_i}\rightarrow \K\setminus\{0\}$ given by $g\mapsto 1$ for every $i=1,\ldots,n.$
		
		Observe that $\mathfrak{Brt}(\K\mathcal{G},A)$ is a commutative inverse semigroup with unity, but is not necessarily a group. In fact, for a biretraction $\alpha_{(\varphi_1,\ldots,\varphi_n)}$ with $\varphi_i\neq 0$ for every $i$ such that $x_i\in X\subseteq\mathcal{G}^{(0)},$
		\begin{align*}
			\left(\alpha_{(\varphi_1,\ldots,\varphi_n)} \ast\alpha^\ast_{(\varphi_1,\ldots,\varphi_n)}\right) (\delta_g) & = \varphi_i(g)\varphi_i(g^{-1}) \,\delta_{1_{x_i}}\, \llbracket s(g)=t(g)=x_i\in X\rrbracket\\
			& = \varphi_i(1_{x_i})\,\delta_{1_{x_i}}\, \llbracket s(g)=t(g)=x_i\in X\rrbracket\\
			& = \delta_{1_{x_i}}\, \llbracket s(g)=t(g)=x_i\in X\rrbracket
		\end{align*}
		for every $g\in\mathcal{G},$ which is not the unity of $\mathfrak{Brt}(\K\mathcal{G},A),$ unless $X=\mathcal{G}^{(0)},$ that is, unless $\alpha_{(\varphi_1,\ldots,\varphi_n)}$ is a global biretraction. In particular, $\mbox{Gl}\mathfrak{Brt}(\K\mathcal{G},A)$ is an abelian group.
		
	\end{exmp}

	\begin{exmp}[The algebraic quantum torus]
		Consider an algebra $T_q$ over $\C,$ generated by two invertible elements $U$ and $V$ satisfying $UV=q\,VU,$ with $q \in \C^\times.$ The algebra $T_q$ has a structure of Hopf algebroid over the commutative $\C$-algebra $A=\C[U]:$
		\begin{itemize}
			\item $s=s_l=t_l=s_r=t_r : A\rightarrow T_q$ is the inclusion map;
			\item $\Delta_l(U^nV^m)= U^nV^m\otimes_A V^m\,\,$ and $\,\,\varepsilon_l(U^nV^m)=U^n;$
			\item $\Delta_r(V^mU^n)= V^mU^n\otimes_A V^m\,\,$ and $\,\,\varepsilon_r(V^mU^n)=U^n;$
			\item $S(U^nV^m)=V^{-m}U^n.$
		\end{itemize}
		
		Observe that the only idempotent of $A$ is $1.$ Then we can only have global biretractions for $T_q.$ By the Remark \ref{s=t}, a global biretraction for $T_q$ can be described as a linear and multiplicative map $\alpha:T_q\rightarrow A$ such that $\alpha|_A = Id_A.$
		
		Moreover, since $\alpha$ is multiplicative, we have that
		$$\alpha(V)\,\alpha(V^{-1})=\alpha(V^{-1})\,\alpha(V)=\alpha(V^{-1}V)=\alpha(1_\C)=1_\C \,\, \Rightarrow \,\, \alpha(V)^{-1}=\alpha(V^{-1}),$$
		which implies that $\alpha(V)$ is invertible in $A,$ and consequently,
		$$U\alpha(V)=\alpha(UV) = q\,\alpha(VU) = q\,U\alpha(V)$$
		$$\Rightarrow q=1_\C.$$
		
		So we only have global biretractions for the commutative torus $T_1.$ In this case, we have that a global biretraction for $T_1$ is a multiplicative and linear map $\alpha: T_1\rightarrow A$ such that $\alpha(U)=U$ and $\alpha(V)= q_\alpha \, U^{t_\alpha},$ with $q_\alpha \in\mathbb{C}$ and $t_\alpha\in \mathbb{Z}.$ 
		
		Moreover, since the zero map is not a global biretraction, any global biretraction  $\alpha: T_1\rightarrow A$ is in fact a morphism of algebras (since $\alpha(1_{T_1}) = 1_A)$.  $T_1$ and $A$ are algebras of Laurent polynomials, $T_1 = \C[U,U^{-1}, V, V^{-1}]$ and $A = \C[U,U^{-1}]$. As algebra, $T_1$ is isomorphic to $A\otimes_{\mathbb{C}} A$, but its structure as a Hopf algebroid doesn't coicide with that given in Example \ref{AA}. General arguments from algebraic geometry show that algebra morphisms $\alpha: T_1 \to A$ correspond to maps $f : \C^\times \to \C^\times \times \C^\times$ whose entries are Laurent polynomials in $z$, i.e., $f(z) = (p_1(z),p_2(z))$ with $p_i(U) \in A$. 
		Given such a map, the associated morphism of algebras is $\alpha(U) = p_1(U), \alpha(V) = p_2(V) $. 
		
		In particular, given a real number $\theta$ and an integer $n$, the biretraction $\alpha : T_1 \to A $ given by $\alpha (V) = e^{2 \pi i \theta}U^n $ corresponds to the the map $f: \C^\times \to \C^\times \times \C^\times, f(z) = (z, e^{2 \pi i \theta}z^n)$. The restriction of $f$ to the unit circle $S^1$ yields the map 
		$$g: S^1 \to S^1 \times S^1, \qquad e^{2 \pi i t} \mapsto (e^{2 \pi i t}, e^{ 2 \pi i (\theta + tn)}).$$
		Hence biretractions of the Hopf algebroid $T_1$ include imersions of $T_1$ in $T^2.$ Also, we can say that $\alpha$ rolls up the unit circle $S^1$ around the torus $T^2.$ Indeed, observe that
		\begin{align*}
			\alpha\ast\alpha(V)& = \alpha\circ\alpha(V)\, \alpha(V) = \alpha(e^{2\pi i\theta}U^n)\, e^{2\pi i\theta}U^n\\
			& = e^{2\pi i\theta}\alpha(U)^n\, e^{2\pi i\theta}U^n = e^{2\pi i\, 2\theta}U^{2n}
		\end{align*}
		and, analogously, 
		$$\alpha^k:=\underbrace{\alpha\ast \cdots\ast \alpha}_{k \, \text{times}}(V) = e^{2\pi i\, k\theta}U^{kn}$$
		Consequently, we can associate $\alpha^k$ with the restriction
		$$g_k: S^1 \to S^1 \times S^1, \qquad e^{2 \pi i t} \mapsto (e^{2 \pi i t}, e^{ 2 \pi i k(\theta + tn)}).$$
		
		We remark that $g$ is a closed curve that starts and ends at $(1,e^{2\pi i\theta})$ for $t=0$ and for $t=1,$ and runs along the torus as shown in \emph{Figure \ref{toro}}.
		\begin{figure}[h]
			\centering
			\includegraphics[scale=0.5]{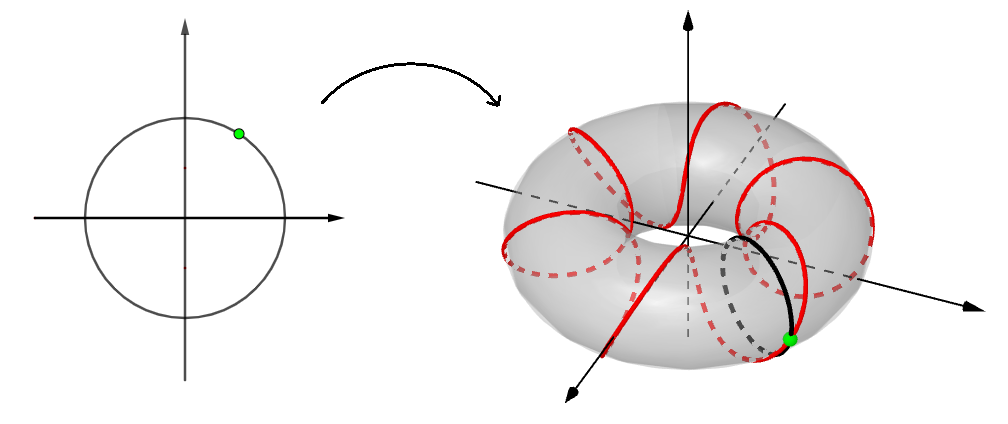}
			\caption{Representation of the curve $g$}
			\label{toro}
		\end{figure}
		
		Observe that the curve $g_2$ acts similarly to $g$ but rolls up twice as fast (vertically) along the torus, starting and ending at $(1,\, e^{4\pi i\theta}).$ In general, the map $g_k$ rolls up the torus $k-$times faster than $g$ vertically, starting and ending at $(1,e^{2k\pi i\theta}).$
	\end{exmp}
	
	\section{Conclusions and outlook}
	
	In this work, we introduced local biretractions over a Hopf algebroid with commutative base algebra. It is not clear, at this point, how to extend this approach to cover Hopf algebroids with noncommutative bases; the balance between noncommutativity of the Hopf algebroid and the locality of the bisections is, so far, the most difficult problem to solve. It is worth noting that 
	the noncommutativity of a Hopf algebroid and of its base algebra was adequately addressed at the level of global biretractions by 
	Xiao Han and Giovanni Landi in their work on the Eheresmann-Schauenburg bialgebroid associated to a noncommutative principal bundle \cite{Han}.
	The group of gauge transformations of a quantum principal bundle (a Hopf-Galois extension) was proved to be isomorphic to the group of global bisections of the Ehresmann-Schauenburg bialgebroid (see \cite{Han}, Proposition 4.6). 
	
	The next steps in this research program are to develop a general theory of local biretractions of Hopf algebroids and trying to establish some categorical equivalence between Hopf algebroids and Quantum inverse semigroups following the ideas introduced in \cite{buss} using, on one side the quantum inverse semigroup of local biretractions of a Hopf algebroid and on the other hand the ``germ'' Hopf algebroid of action defined by a quantum inverse semigroup.

	\section{Aknowledgements}
	The authors wish to thank Paolo Saracco for fruitful discussions and suggestions to improve this work during his visit to the Maths Department of UFSC in February 2022. 
	This work has been supported by the Brazilian research agencies CAPES, Coordena\c{c}\~ao 
	de Aperfei\c{c}oamento de Pessoal de N\'ivel Superior, and CNPq, National Council for Scientific and Technological Development. The first author was partially by the CNPq grant
	309469/2019-8 and the third author was partially supported by CAPES, Finance Code 001.

\end{document}